\DeclareRobustCommand{\format@sec@number}[2]{{\normalfont\upshape#1}#2}
\def\e{\varepsilon}
\def\a{\alpha}
\def\b{\beta}
\def\d{\delta}
\def\l{\lambda}
\def\s{\sigma}
\def\R{\mathbb R}
\def\N{{\mathbb N}}
\def\Z{\mathbb Z}
\def\T{\mathbb T}
\def\C{\mathbb C}
\def\Q{\mathbb Q}
\def\({\biggl(}
\def\){\biggr)}
\def\<{\mathbf{\langle}}
\def\>{\mathbf{\rangle}}
\newcommand{\ee}{\mathrm{e}}
\newcommand{\Meng}[2]{\left\{#1\mathrel{}\middle|\mathrel{}#2\right\}}
\newcommand{\abs}[1]{\left\lvert#1\right\rvert}
\numberwithin{equation}{section}
\newtheorem{theorem}[equation]{Theorem}
\newtheorem{proposition}[equation]{Proposition}
\newtheorem{lemma}[equation]{Lemma}
\newtheorem{corollary}[equation]{Corollary}
\newtheorem{maintheorem}{Theorem}
\theoremstyle{definition}
\newtheorem{definition}[equation]{Definition}
\theoremstyle{definition}
\theoremstyle{remark}
\newtheorem{remark}{Remark}
\title{Real-analytic AbC constructions on the torus}
\author{Shilpak Banerjee and Philipp Kunde}
\date{}
\begin{document}

\maketitle

\begin{abstract}
In this article we demonstrate a way to extend the AbC (approximation by conjugation) method invented by Anosov and Katok from the smooth category to the category of real-analytic diffeomorphisms on the torus. We present a general framework for such constructions and prove several results. In particular, we construct minimal but not uniquely ergodic diffeomorphisms and nonstandard real-analytic realizations of toral translations. 
\end{abstract}

\tableofcontents

\section{Introduction}
An important question in Smooth Ergodic Theory asks if there are smooth versions to the objects and concepts of abstract ergodic theory. One of the most powerful tools of constructing volume preserving $C^{\infty}$-diffeomorphisms with prescribed ergodic or topological properties on any compact connected manifold $M$ of dimension $m\geq 2$ admitting a non-trivial circle action $\mathcal{S} = \left\{\phi^t\right\}_{t \in \mathbb{S}^1}$ is the so called approximation by conjugation-method developed by D.V. Anosov and A. Katok in their fundamental paper \cite{AK}. These diffeomorphisms are constructed as limits of conjugates $T_n = H^{-1}_n \circ \phi^{\alpha_{n}} \circ H_n$, where $\alpha_{n} = \frac{p_n}{q_n}= \alpha_{n-1} + \frac{1}{s_{n-1} \cdot k_{n-1} \cdot l_{n-1} \cdot q^2_{n-1}} \in \mathbb{Q}$, $H_n = h_n \circ H_{n-1}$ and $h_n$ is a measure-preserving diffeomorphism satisfying $\phi^{\alpha_{n-1}} \circ h_n = h_n \circ \phi^{\alpha_{n-1}}$. In each step the conjugation map $h_n$ and the parameters $k_{n-1}, l_{n-1}$ are chosen such that the diffeomorphism $f_n$ imitates the desired property with a certain precision. Then the parameter $s_{n-1}$ is chosen large enough to guarantee closeness of $f_{n}$ to $f_{n-1}$ in the $C^{\infty}$-topology and so the convergence of the sequence $\left(f_n\right)_{n \in \mathbb{N}}$ to a limit diffeomorphism is provided. This method enables the construction of smooth diffeomorphisms with specific ergodic properties (e. g. weak mixing ones in \cite[section 5]{AK}) or non-standard smooth realizations of measure-preserving systems (e. g. \cite[section 6]{AK} and \cite{FSW}). See also the very interesting survey article \cite{FK} for more details and other results of this method. 

Unfortunately, there are great challenging differences in the real-analytic category as discussed in \cite[section 7.1]{FK}: Since maps with very large derivatives in the real domain or its inverses are expected to have singularities in a small complex neighbourhood, for a real analytic family $S_t$, $0 \leq t \leq t_0$, $S_0 = \text{id}$, the family $h^{-1} \circ S_t \circ h$ is expected to have singularities very close to the real domain for any $t>0$. So, the domain of analycity for maps of our form $f_n = H^{-1}_n \circ \phi^{\alpha_{n}} \circ H_n$ will shrink at any step of the construction and the limit diffeomorphism will not be analytic. Thus, it is necessary to find conjugation maps of a special form which may be inverted more or less explicitly in such a way that one can guarantee analycity of the map and its inverse in a large complex domain. Using very explicit conjugation maps Saprykina was able to construct examples of volume-preserving uniquely ergodic real-analytic diffeomorphims on $\mathbb{T}^2$ (\cite{S}). Fayad and Katok designed such examples on any odd-dimensional sphere in \cite{FK-ue}.

The goal of this article is to reproduce some examples of smooth dynamical systems obtained by the AbC (approximation by conjugation) scheme in the category of real-analytic diffeomorphisms on the torus $\mathbb{T}^d$, $d \geq 2$. For this purpose, we introduce the concept of block-slide type maps on the torus and demonstrate that these maps in a certain sense can be approximated well enough by measure preserving real-analytic diffeomorphisms. This allows us to carry out many AbC constructions in the real-analytic category. We briefly summarise certain previously past results and prove several new ones using real-analytic approximation of block-slide type maps.  Note that all constructions in this article are done on the torus. Real-analytic AbC constructions on arbitrary real-analytic manifolds continue to remain an intractable problem. 

Throughout this article $\T^d$ will denote the $d$ dimensional torus and $\mu $ will stand for the usual Lebesgue measure on $\T^d$. We will use $\text{Diff }^\omega_\rho(\T^d\, \mu )$ to denote the set of real-analytic $\mu$-measure preserving diffeomorphisms of the $d$ dimensional torus whose lift can be extended holomorphically to a complex neighbourhood of diameter at least $\rho$.

First we tackle the problem of non-standard realizations (i.e. to find a diffeomorphism which is metrically but not smoothly isomorphic to a given measure-preserving transformation). The first author used the AbC method and the concept of real-analytic approximation of block-slide type maps to find examples of measure preserving real-analytic, ergodic diffeomorphisms on the torus that are metrically isomorphic to some irrational rotation of the circle. The precise theorem can be stated as follows:

\begin{theorem}[\cite{Ba-Ns}] \label{nsr circle rotation}
For any $\rho>0$ and any integer $d\geq 1$, there exist real-analytic diffeomorphisms $T\in\text{Diff }^\omega_\rho(\T^d\, \mu )$ which are metrically isomorphic to some irrational rotations of the circle.
\end{theorem}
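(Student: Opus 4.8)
\emph{Setup.} The plan is to run the Anosov--Katok approximation-by-conjugation scheme on $\T^d$, using as circle action the translation $\phi^t(x_1,\dots,x_d)=(x_1+t,x_2,\dots,x_d)$ and replacing the smooth conjugations of the classical construction by real-analytic approximants of block-slide type maps, so that the domain of holomorphy does not degenerate in the limit. (For $d=1$ the rotations $R_\alpha$ themselves already witness the statement, so assume $d\ge 2$.) One builds $T_n=H_n^{-1}\circ\phi^{\alpha_n}\circ H_n$ with $\alpha_n=p_n/q_n=\alpha_{n-1}+\frac{1}{s_{n-1}k_{n-1}l_{n-1}q_{n-1}^2}$, $H_n=h_n\circ H_{n-1}$, each $h_n$ a $\mu$-preserving real-analytic diffeomorphism commuting with $\phi^{\alpha_{n-1}}$ and holomorphic on a complex strip of width bounded below throughout the construction; one arranges $\alpha_n\to\alpha$ with $\lvert\alpha-\alpha_n\rvert$ small enough that each $p_n/q_n$ is a continued-fraction convergent of the (necessarily Liouville) irrational $\alpha$. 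The limit $T=\lim_n T_n$ will be the desired diffeomorphism.

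\emph{Combinatorial skeleton.} Let $\eta_n$ be the partition of $\T^d$ into the $q_n$ slabs $\{j/q_n\le x_1<(j+1)/q_n\}$, and put $\xi_n=H_n^{-1}\eta_n$; since $\phi^{\alpha_n}$ permutes the atoms of $\eta_n$ cyclically with step $p_n$, the map $T_n$ permutes the atoms of $\xi_n$ cyclically with step $p_n$, i.e.\ it acts on $\xi_n$ as rotation by $p_n$ of $\Z/q_n\Z$. The recursion gives $q_n\mid q_{n+1}$, so $\eta_{n+1}$ refines $\eta_n$; one now chooses each $h_{n+1}$ to be a block-slide map adapted to the $q_{n+1}$-scale grid (shuffling the thin slabs and shearing in the remaining coordinates), together with $k_n,l_n$, so that (i) $\xi_{n+1}$ refines $\xi_n$ with compatible labelling of atoms, and (ii) the atoms of $\xi_n$ become uniformly small in all $d$ coordinates, whence $\bigvee_n\xi_n$ generates the Borel $\sigma$-algebra of $(\T^d,\mu)$. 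Property (i) is where the explicit, invertible nature of block-slide maps is used: a generic analytic conjugation would not keep the tower structure under control.

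\emph{Metric isomorphism.} On the circle side let $\zeta_n$ be the partition of $\mathbb{S}^1$ into the $q_n$ intervals of length $1/q_n$; since $\lvert\alpha-p_n/q_n\rvert<1/(2q_n^2)$, the rotation $R_\alpha$ maps the $j$-th interval onto the $(j+p_n)$-th up to a set of relative measure $q_n\lvert\alpha-p_n/q_n\rvert=:\delta_n$ with $\sum_n\delta_n<\infty$. Define $K_n\colon\T^d\to\mathbb{S}^1$ to be a measure-preserving map sending, for every $j$, the $j$-th atom of $\xi_n$ onto the $j$-th interval of $\zeta_n$, chosen compatibly with $K_{n-1}$ via (i). Then $\lvert K_{n+1}-K_n\rvert\le 1/q_n\to 0$, so $(K_n)$ converges to a measure-preserving map $K$, which is invertible by (ii) and because $\bigvee_n\zeta_n$ generates on $\mathbb{S}^1$; a Borel--Cantelli argument using $\sum_n\delta_n<\infty$ together with the $\mu$-smallness of the symmetric difference between the action of $T$ on $\xi_n$ and that of $T_n$ on $\xi_n$ gives $K\circ T=R_\alpha\circ K$ a.e. This is the standard periodic-approximation (tower-matching) step and is insensitive to $d$.

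\emph{Real-analytic convergence --- the main obstacle.} It remains to show $T_n\to T$ in $\text{Diff }^\omega_\rho(\T^d,\mu)$, i.e.\ that $T_n$ and $T_n^{-1}$ extend holomorphically to a fixed complex neighbourhood of width $\ge\rho$ and converge uniformly there. Writing $T_{n+1}$ as $T_n$ composed with a perturbation that is $H_{n+1}^{\pm1}$-conjugate to $\phi^{\alpha_{n+1}-\alpha_n}$ (using $h_{n+1}\circ\phi^{\alpha_n}=\phi^{\alpha_n}\circ h_{n+1}$), one bounds the $\rho$-analytic distance $d_\rho(T_{n+1},T_n)$ by $C(M_{n+1})\,\lvert\alpha_{n+1}-\alpha_n\rvert$, where $M_{n+1}<\infty$ is the $\rho$-analytic norm of $H_{n+1}^{\pm1}$, already fixed once $h_1,\dots,h_{n+1}$ have been chosen. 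Since $\lvert\alpha_{n+1}-\alpha_n\rvert=\frac{1}{s_nk_nl_nq_n^2}$ can be made as small as we wish by enlarging $s_n$ \emph{after} $M_{n+1}$ is known, the series $\sum_n d_\rho(T_{n+1},T_n)$ converges and yields $T,T^{-1}\in\text{Diff }^\omega_\rho(\T^d,\mu)$. The decisive point --- the very reason block-slide type maps are introduced --- is that their real-analytic approximants $h_n$ remain holomorphic on strips whose width does \emph{not} shrink to zero and have explicitly estimable norms, whereas conjugating by a general analytic diffeomorphism would shrink the strip at every step and the limit would fail to be analytic. The hard part of the whole argument is therefore the simultaneous bookkeeping of parameters: $h_{n+1}$ must approximate its ideal block-slide model closely enough to preserve the combinatorics of the skeleton step, yet keep $M_{n+1}$ finite and computed before $s_n$ is selected. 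This is reconciled by fixing, in order, $k_n$ and $l_n$, then $h_{n+1}$ (invoking the block-slide approximation theorem with tolerance dictated by the target width and by $M_{n+1}$), and finally $s_n$.
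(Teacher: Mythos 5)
Your overall architecture (AbC scheme with $T_n=H_n^{-1}\circ\phi^{\alpha_n}\circ H_n$, block-slide models approximated by entire maps so the strip of holomorphy does not collapse, convergence by choosing $s_n$ after the analytic norm of $H_{n+1}$ is fixed) is the same as in the paper and in \cite{Ba-Ns}, and the convergence part of your argument is essentially correct. The gap is in your combinatorial claim (i) and the ``compatibility'' of the maps $K_n$ that you build on it. You assert that $h_{n+1}$ can be chosen so that $\xi_{n+1}=H_{n+1}^{-1}\eta_{n+1}$ \emph{exactly} refines $\xi_n=H_n^{-1}\eta_n$ with a coherent labelling, and you then invoke the standard tower-matching lemma (in effect Lemma \ref{4.1} of \cite{AK}, which requires a \emph{monotonic} generating sequence of partitions). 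But $h_{n+1}$ is not the block-slide map itself: by Proposition \ref{proposition approximation} it only agrees with the block-slide model up to $\varepsilon$ outside an error set $E$ of measure $\delta$. Consequently $h_{n+1}^{-1}(R_{i,q_n})$ coincides with $\Delta_{i,q_n}$ only up to a set of measure $\varepsilon_n$ (condition (4) of the scheme in Subsection 2.4), and the pulled-back partitions $\mathcal{F}_{q_n}$ are generating but \emph{not} monotonic. This is precisely the difficulty the paper singles out: in the smooth category one can force exact refinement, but ``in the real-analytic case, our construction is not flexible enough to guarantee monotonicity.'' As stated, your step ``chosen compatibly with $K_{n-1}$ via (i)'' has nothing to be compatible with, and your Borel--Cantelli argument only repairs the circle-side mismatch $\delta_n=q_n|\alpha-p_n/q_n|$, not the torus-side failure of refinement.

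What is needed to close the gap is the error-set version of the isomorphism lemma, i.e. Lemma \ref{lemma mtl}: one keeps track of sets $E_n^{(1)}\subset\T^d$, $E_n^{(2)}\subset S^1$ with summable measures such that refinement and compatibility of the $K_n$ hold only off these sets (conditions \ref{mtl 2} and \ref{mtl 8}), and one must additionally arrange the dynamical condition \ref{mtl 6}, namely that $T_{n+1}$ (resp. $R_{\alpha_{n+1}}$) preserves the tail unions $\bigcup_{m\ge n}E_m^{(i)}$ --- in the construction this is achieved by taking the error sets to be unions of thin strips adapted to the $\tfrac1{q_n}$-periodic structure so that $\phi^{\alpha_{n+1}}$ preserves them. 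The isomorphism $K$ is then obtained by applying Lemma \ref{4.1} on the complements $M_N^{(i)}=M^{(i)}\setminus\bigcup_{n\ge N}E_n^{(i)}$ and letting $N\to\infty$. Without this modification (or some substitute, e.g. the a posteriori construction of a genuinely monotonic sequence as in Proposition \ref{proposition monotonic generating cyclic partition}, which itself relies on the quantitative estimate $\mu(h_{m+1}^{-1}R_{i,q_m}\triangle\Delta_{i,q_m})<\varepsilon_m$ and summability of $q_m\varepsilon_m$), the isomorphism step of your proposal does not go through.
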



The diffeomorphisms constructed by Anosov and Katok in \cite[section 4]{AK} realized circle rotations smoothly with Liouvillean rotation numbers. However, it was not clear from this
construction which Liouvillean rotations were realized. Later, Fayad, Saprykina and Windsor extended this result in \cite{FSW} and proved that any Liouvillean rotation of the circle can be realized. In the analytic category, we can not expect realization of every Liouvillean rotation of the circle but we can give a precise description of a subset of some of the Liouvillean rotations we realize. We introduce the set $\mathcal{L}_{\ast}$ of numbers contained in the set of Non-Brjuno numbers: $\mathcal{\alpha} \in \mathbb{R}$ is in $\mathcal{L}_{\ast}$ if for every $k \in \mathbb{N}$ there is $\left( p,q\right) \in \mathbb{Z} \times \mathbb{N}$ with $p,q$ relatively prime satisfying
\begin{align}
\Big|{\alpha - \frac{p}{q}}\Big| < \frac{1}{\mathrm{e}^{\mathrm{e}^{k^q}}}.
\end{align}
In a later section we will examine this set of numbers. In particular, we will show that $\mathcal{L}_{\ast}$ is a dense $G_{\delta}$-subset of $\mathbb{R}$. We will prove,

\begin{maintheorem} \label{nsr circle rotation estimated}
For any $\rho>0$ and every $\a \in \mathcal{L}_{\ast}$ there exists a real-analytic diffeomorphism $T\in\text{Diff }^\omega_\rho(\T^2\, \mu )$ which is metrically isomorphic to the rotation $S_\a$ of the circle.
\end{maintheorem}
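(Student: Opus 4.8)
The plan is to run the real-analytic AbC scheme with $\phi^t$ the standard linear flow $\phi^t(x_1,x_2) = (x_1 + t, x_2)$ on $\mathbb{T}^2$, building $T = \lim_n T_n$ with $T_n = H_n^{-1} \circ \phi^{\alpha_n} \circ H_n$, $H_n = h_n \circ H_{n-1}$, and $\alpha_n \to \alpha$ rapidly. Because $\alpha \in \mathcal{L}_\ast$ we have at our disposal, for each $k$, a convergent $p/q$ with $|\alpha - p/q| < \mathrm{e}^{-\mathrm{e}^{k^q}}$; at stage $n$ we would pick $\alpha_{n} = p_n/q_n$ to be such a rational approximant and drive the inductive construction so that $q_n \to \infty$. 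The conjugating maps $h_n$ will be finite compositions of block-slide type maps along the $\phi^{\alpha_{n-1}}$-orbits (maps commuting with $\phi^{\alpha_{n-1}}$, acting as permutations of a grid of $q_{n-1}$ columns with shears in the second coordinate), and then — and this is the essential analytic input — each block-slide type map is replaced by a genuine real-analytic $\mu$-preserving diffeomorphism that approximates it, using the approximation-of-block-slide-type-maps machinery promised in the introduction, with complex-analyticity width controlled so that $h_n$, hence $T_n$, lies in $\mathrm{Diff}^\omega_{\rho}(\mathbb{T}^2,\mu)$ uniformly in $n$.

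The metric-isomorphism half of the argument is the standard Anosov–Katok criterion: one arranges that $H_n$ conjugates $\phi^{\alpha_n}$ to something that, on a tower of height $q_n$, looks like the rotation $S_\alpha$ on a single circle factor; more precisely one shows the partition $\xi_n = H_n^{-1}(\text{standard columns})$ is a generating sequence of towers under $T_n$, that $T_n$ permutes these towers cyclically as $S_{\alpha_n}$ permutes $q_n$ points, and that the $\xi_n$'s are refining and generate the Borel $\sigma$-algebra. Passing to the limit, $T$ is measurably isomorphic to an odometer-type inverse limit which, because $\alpha_n \to \alpha$ in the strong (denominator-matching) sense, is exactly the rotation $S_\alpha$. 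This part is essentially a transcription of \cite{AK} and of Theorem~\ref{nsr circle rotation}'s proof in \cite{Ba-Ns}, and one should be able to cite the relevant lemmas rather than redo the tower combinatorics.

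The convergence half — making $T = \lim T_n$ exist in $\mathrm{Diff}^\omega_\rho$ — is where the Non-Brjuno condition is spent. The difficulty is that to make $\xi_n$ a fine enough tower one needs $h_n$ to have spatial complexity growing with $q_{n-1}$ (the block-slide map has $\sim q_{n-1}$ pieces and shear slopes of size up to $\sim q_{n-1}$), and its real-analytic approximant then has, roughly, $C^0$-size of derivatives and loss of complex width that degrade doubly-exponentially in $q_{n-1}$; to still have $\|T_n - T_{n-1}\|$ summable in the $\rho$-analytic norm one needs $|\alpha_n - \alpha_{n-1}|$, equivalently essentially $1/q_n$ relative to the previous errors, to beat a tower of exponentials in $q_{n-1}$ — which is exactly the shape $\mathrm{e}^{-\mathrm{e}^{k^q}}$ built into $\mathcal{L}_\ast$. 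So the key steps are: (i) fix the block-slide description of the desired $h_n$ in terms of $q_{n-1}$; (ii) invoke the approximation theorem to get $\tilde h_n \in \mathrm{Diff}^\omega$ with an explicit bound, in terms of $q_{n-1}$ and the target width, on $\|\tilde h_n\|_\rho$ and on the width lost; (iii) choose $q_n$ (hence the approximant $\alpha_n = p_n/q_n$ from $\mathcal{L}_\ast$) so large that $\phi^{\alpha_n - \alpha_{n-1}}$ conjugated by $H_{n-1}$ is within $2^{-n}$ of the identity in $\|\cdot\|_\rho$ while still $q_n \to \infty$; (iv) conclude $T_n \to T$ in $\mathrm{Diff}^\omega_\rho(\mathbb{T}^2,\mu)$ and combine with the isomorphism statement. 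The main obstacle I anticipate is bookkeeping step (iii) — verifying that the doubly-exponential gap afforded by $\alpha \in \mathcal{L}_\ast$ really does dominate the doubly-exponential cost of the analytic approximation of the $q_{n-1}$-complex block-slide map, uniformly enough to close the induction — and making the roles of the auxiliary parameters (the $s_{n-1}$, $k_{n-1}$, $l_{n-1}$ of the AbC scheme) precise in this analytic setting.
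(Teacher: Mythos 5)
Your overall route coincides with the paper's: the same AbC scheme on $\T^2$ with block-slide conjugations replaced by their real-analytic approximations, and the same accounting in the convergence half. In particular you correctly locate where $\mathcal{L}_{\ast}$ is spent: the conjugation $h_{n+1}$ must commute with $\phi^{\alpha_n}$, so it contains a $\frac{1}{l_nq_n}$-periodic analytic piece whose holomorphic extension on the band of width $\rho'_n$ is of doubly exponential size in $q_n$ (this is Lemma \ref{lem:est} together with the explicit choice of $A$ in Lemma \ref{lem:A}), and this cost is paid by choosing $\alpha_n=p_n/q_n$ with $\abs{\alpha-\alpha_n}<\ee^{-\ee^{k^{q_n}}}$, where $k$ depends only on data ($l_n$, $\rho'_n$) fixed before $q_n$ is selected -- exactly the mechanism of Lemma \ref{lem:conv}. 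One small framing correction: no complex width is lost by the conjugations themselves, since each $h_n$ is entire (Proposition \ref{proposition approximation} gives $h_n\in\text{Diff }^\omega_\infty(\T^2,\mu)$); the doubly exponential blow-up appears only in the norm estimates on the band of width $\rho_n$, not as a shrinking domain of analyticity.

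The genuine gap is in the isomorphism half: you assert that the tower partitions $\xi_n=H_n^{-1}(\text{standard columns})$ are \emph{refining} and then invoke the classical Anosov--Katok criterion (Lemma \ref{4.1}), which requires monotone generating partitions and exact compatibility $K_{n+1}(\mathcal{P}^{(1)}_n)=K_n(\mathcal{P}^{(1)}_n)$. In the real-analytic construction this monotonicity fails: $h_{n+1}$ realizes the prescribed block-slide combinatorics only up to an $(\e,\d)$-error set, so an atom of $\mathcal{F}_{q_{n+1}}=H_{n+1}^{-1}\mathcal{T}_{q_{n+1}}$ sits inside an atom of $\mathcal{F}_{q_n}$ only after removing a set of measure $\approx\e_n$, and the hypotheses of Lemma \ref{4.1} are simply not met. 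This is precisely why the paper proves the modified measure-theoretic criterion, Lemma \ref{lemma mtl}, which tolerates error sets $E_n$ of summable measure that are invariant in the tail sense (condition \ref{mtl 6}) and requires refinement and compatibility of the $K_n$ only off $E_{n+1}$ (conditions \ref{mtl 2} and \ref{mtl 8}); it is then applied with $M^{(1)}=\T^2$, $\mathcal{P}^{(1)}_n=\mathcal{F}_{q_n}$, $M^{(2)}=\T^1$, $\mathcal{P}^{(2)}_n=\mathcal{C}_{q_n}$ and $K_n(H_n^{-1}\Delta_{i,q_n})=\Gamma_{i,q_n}$. Deferring the tower combinatorics to \cite{Ba-Ns} is legitimate (the paper does the same), but as written your verification plan -- ``show the $\xi_n$'s are refining and pass to the limit'' -- is a step that would fail; you must either use the error-set version of the criterion or first replace $\mathcal{F}_{q_n}$ by genuinely monotone partitions as in Proposition \ref{proposition monotonic generating cyclic partition}, and additionally check that the error sets are permuted by $T_{n+1}$ so that condition \ref{mtl 6} holds.
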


In the realm of non-standard realizations, there is another set of questions dedicated to the realization of ergodic translations of a torus on another manifold. In the original paper Anosov and Katok showed that certain ergodic translations on a $d$ dimensional torus can be realized as measure preserving smooth diffeomorphisms on any smooth manifold admitting an effective $\T^1$ action (see \cite[section 6]{AK}). We should note that this result was further improved by Benhenda and it was shown that one can realize any ergodic translation on $\T^d$ with one arbitrary Liouvillean coordinate (see \cite{Mb-ts}).  

It appears that the block-slide type maps allow enough flexibility for us to realize some of these ergodic translations analytically on another torus of arbitrary dimension. We prove,

\begin{maintheorem} \label{theorem nsr total translations}
For any $\rho>0$ and any two integers, $h\geq 1$ and $d\geq 2$, there exists an ergodic real-analytic diffeomorphism $T\in\text{Diff }^\omega_\rho(\T^d)$ which is metrically isomorphic to an ergodic translation of $\T^h$.
\end{maintheorem}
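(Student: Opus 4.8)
The plan is to run the real-analytic AbC scheme on $\T^d$ with the standard linear circle action $\phi^t(x_1,\dots,x_d) = (x_1+t,x_2,\dots,x_d)$, building the target diffeomorphism as a limit $T = \lim_n T_n$ with $T_n = H_n^{-1}\circ\phi^{\alpha_n}\circ H_n$, $\alpha_n = p_n/q_n \to \alpha$ rational, and $H_n = h_n\circ H_{n-1}$ where each $h_n$ commutes with $\phi^{\alpha_{n-1}}$. The combinatorial heart is the choice of conjugating maps: following the smooth construction of Anosov--Katok in \cite[section 6]{AK} (and its refinement by Benhenda \cite{Mb-ts}), one selects $h_n$ so that $T$ becomes metrically isomorphic to a fixed ergodic translation $R_\beta$ of $\T^h$, where $\beta=(\beta_1,\dots,\beta_h)$ is rationally independent together with $1$. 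Concretely, the isomorphism is realized by a measurable conjugacy built from the sequence $H_n$: the partition of $\T^d$ into the pieces permuted by $h_n$ along the first coordinate is arranged to code, in the limit, the orbit structure of $R_\beta$ acting on a copy of $\T^h$ sitting inside $\T^d$ (the remaining $d-h$ coordinates are used as ``room'' for the approximation and are collapsed in the limit). One chooses the $q_n$ to grow fast enough that the Rokhlin towers built from $\phi^{\alpha_n}$ converge to the Lebesgue space of $R_\beta$, so that the metric isomorphism follows from the standard AbC criterion.

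The step that must be done differently from the smooth case is the construction and estimation of the conjugating maps $h_n$. In the smooth category one takes $h_n$ to be (essentially) a composition of a permutation of small cubes with a ``stretch'' map, but such maps have no good analytic extension with uniform width. Instead I would realize each required combinatorial $h_n$ as a \emph{block-slide type map} on $\T^d$ — a finite composition of maps that, on a sub-box, translate one coordinate by a (piecewise constant, step-function) amount depending on the others — and then invoke the paper's key approximation result: block-slide type maps can be approximated arbitrarily well, in the $C^\infty$ (and hence the relevant) topology, by elements of $\text{Diff }^\omega_\rho(\T^d,\mu)$ for any prescribed $\rho$, \emph{together with their inverses}, on a complex neighbourhood of fixed diameter $\rho$. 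Thus $h_n$ is replaced by a real-analytic $\tilde h_n$ that is $\eps_n$-close to the ideal block-slide map; the commutation relation $\phi^{\alpha_{n-1}}\circ\tilde h_n = \tilde h_n\circ\phi^{\alpha_{n-1}}$ is preserved because the block-slide maps are chosen to be equivariant under the rational rotation $\phi^{\alpha_{n-1}}$ in the first coordinate (one works at the level of the $q_{n-1}$-fold cover, where $\phi^{\alpha_{n-1}}$ becomes a cyclic shift of boxes, and builds $h_n$ to commute with that shift).

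With the analytic $\tilde h_n$ in hand, the convergence scheme is the usual one: having fixed $\tilde h_n$ (and hence $\tilde H_n = \tilde h_n\circ \tilde H_{n-1}$, whose analytic width is $\geq \rho$ uniformly because block-slide approximations do not shrink the domain), choose $\alpha_n = \alpha_{n-1} + 1/(s_{n-1} k_{n-1} l_{n-1} q_{n-1}^2)$ with $s_{n-1}$ large enough that $\|T_n - T_{n-1}\|_\rho < 2^{-n}$, where $\|\cdot\|_\rho$ is the sup-norm on the complex $\rho$-neighbourhood; this uses only that $\phi^t$ is analytic and that $\tilde H_n^{-1}$ is analytic with controlled width, so $\phi^{\alpha_n}$ and $\phi^{\alpha_{n-1}}$ are close after conjugation. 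The limit $T$ then lies in $\text{Diff }^\omega_\rho(\T^d,\mu)$. The remaining work is bookkeeping: verify that $q_n\to\infty$ fast enough that the partitions $\tilde H_n^{-1}(\xi_n)$ (with $\xi_n$ the natural tower partition) generate, that $T$ is ergodic (indeed metrically isomorphic to $R_\beta$, via the measurable limit of $\tilde H_n$), and that the coding really reproduces the $\T^h$-translation rather than some factor of it.

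The main obstacle I anticipate is not the analytic approximation per se — that is supplied by the paper's block-slide machinery — but the \emph{design of the combinatorics}: arranging a single family of $\phi^{\alpha_{n-1}}$-equivariant block-slide maps $h_n$ on $\T^d$ whose infinite composition produces exactly an $h$-dimensional torus translation as a measurable factor, while simultaneously keeping each $h_n$ expressible with a bounded number of block-slide pieces (so that the $\eps_n$-approximation cost, and hence the required analytic width budget, stays under control as $n\to\infty$). Reconciling Benhenda's ``one Liouvillean coordinate'' flexibility with the rigidity imposed by demanding a fixed analyticity radius $\rho$ — i.e. showing the target translation $\beta$ can be taken genuinely ergodic on all of $\T^h$ — is the delicate point, and is presumably where the hypothesis $d\geq 2$ (room for the construction) and the freedom in choosing $\rho$ are used.
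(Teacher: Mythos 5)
Your overall framework coincides with the paper's: run the analytic AbC scheme on $\T^d$, realize the needed conjugations as block-slide type maps and approximate them by entire-lift analytic diffeomorphisms via Proposition \ref{proposition approximation}, and import the combinatorics of \cite[section 6]{AK}. However, the proposal defers exactly the two points that constitute the proof, and one of them is set up in a way that would fail. You fix the target translation $R_\beta$ of $\T^h$ in advance, with $\beta$ an arbitrary vector rationally independent with $1$. In the analytic category this cannot be carried out as stated: convergence in $\text{Diff}^\omega_\rho$ forces $|\alpha_{n+1}-\alpha_n|$ to be doubly exponentially small in parameters that grow with $n$ (compare the estimates in Lemma \ref{lem:conv}), so the limiting rotation data must be of extreme Liouville type and is \emph{produced by} the construction rather than prescribed. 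Accordingly, Theorem \ref{theorem nsr total translations} only claims \emph{some} ergodic translation, and the paper obtains it from the AK-type approximation Lemma \ref{8.90}, which simultaneously generates the integer vectors $\gamma_n$, the rationals $\alpha_n=(p_n/q_n)\gamma_n$, and the tower partitions $\tilde{\mathcal F}_{q_n}$, $\tilde{\mathcal F}_{q_n,q_{n+1}}$ of $\T^h$, with the free parameter $m_n$ available to absorb the analytic convergence requirement ($m_n=\gamma_n^{(h)}l_n$ with $l_n$ as large as the scheme demands).

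The second missing piece is the isomorphism mechanism itself, which is not a one-line appeal to a ``standard AbC criterion''. Since the partitions $\mathcal F_{q_n}=H_n^{-1}\mathcal T_{q_n}$ are not monotone, the paper first builds monotonic, generating, cyclically permuted partitions $\mathcal M_n$ on $\T^d$ and $\tilde{\mathcal M}_n$ on $\T^h$ (Propositions \ref{proposition monotonic generating cyclic partition} and \ref{proposition monotonic generating cyclic partition 2}), and then constructs correspondences $K_n,\hat K_n,\bar K_n,\tilde K_n$ and chooses $k_n=s_n\gamma_{n+1}^{(h)}$ and the assignment $a_n$ (read off from $\hat K_n(\Gamma_{0,q_n,k_nq_n})$) so that the compatibility relation \ref{5.436} holds; verifying this intertwining is the heart of subsection 4.3, and it is exactly here that the block-slide maps of subsection \ref{constr transl} (the interchanges $\mathfrak f_{k,q}$ and rearrangements $\mathfrak w_{i,l,k,q}$) are needed, because they realize an \emph{arbitrary} $\phi^{1/q_n}$-equivariant assignment $a_n$, not just the special one used for circle rotations. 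Relatedly, your picture of ``a copy of $\T^h$ sitting inside $\T^d$ with the remaining $d-h$ coordinates collapsed'' tacitly assumes $h\le d$, whereas the theorem allows any $h\ge 1$ (including $h>d$); the isomorphism in the paper is a purely measure-theoretic matching of the two tower/partition structures and involves no embedding of $\T^h$ into $\T^d$. Finally, your worry about an ``analytic width budget'' for the $h_n$ is moot: the approximants lie in $\text{Diff}^{\omega}_{\infty}(\T^d,\mu)$ (entire lifts), and the only width constraint is on the limit $T$, controlled through the smallness of $|\alpha_{n+1}-\alpha_n|$.
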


And the obvious corollary follows:

\begin{maintheorem}
For any $\rho>0$ and any two integers, $h\geq 1$ and $d\geq 2$, there exists an ergodic real-analytic diffeomorphism $T\in\text{Diff }^\omega_\rho(\T^d)$ such that $T$ has a discrete spectrum generated (over $\Z$) by $h$ linearly independent eigenvalues.
\end{maintheorem}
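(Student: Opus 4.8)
The plan is to read this statement off directly from Theorem~\ref{theorem nsr total translations} together with the classical Halmos--von Neumann description of ergodic translations. By Theorem~\ref{theorem nsr total translations}, for the given $\rho$, $h$ and $d$ there is an ergodic $T\in\text{Diff }^\omega_\rho(\T^d)$ that is metrically isomorphic to some ergodic translation $R_\a\colon x\mapsto x+\a$ on $\T^h$, with $\a=(\a_1,\dots,\a_h)$. Recall that ergodicity of $R_\a$ on $\T^h$ is equivalent to $1,\a_1,\dots,\a_h$ being linearly independent over $\Q$, so the translation supplied by the theorem automatically has a $\Q$-independent frequency vector; I would make this point explicit at the outset, since it is what prevents the $h$ eigenvalues from collapsing to a group of lower rank.

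Next I would recall the standard spectral picture for $R_\a$. The Koopman operator on $L^2(\T^h)$ is diagonalized by the characters $e_n(x)=\mathrm{e}^{2\pi i\langle n,x\rangle}$, $n\in\Z^h$, with $U_{R_\a}e_n=\mathrm{e}^{2\pi i\langle n,\a\rangle}e_n$. Hence $R_\a$ has pure point (discrete) spectrum, and its group of eigenvalues is $\{\mathrm{e}^{2\pi i\langle n,\a\rangle}:n\in\Z^h\}$, which is generated over $\Z$ by the $h$ eigenvalues $\l_j=\mathrm{e}^{2\pi i\a_j}$, $j=1,\dots,h$. The $\Q$-linear independence of $1,\a_1,\dots,\a_h$ is precisely the statement that no nontrivial integer relation $\prod_j\l_j^{n_j}=1$ holds, i.e.\ the $\l_j$ are multiplicatively independent; thus the eigenvalue group is isomorphic to $\Z^h$ and cannot be generated by fewer than $h$ elements.

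Finally, a metric isomorphism intertwines the two Koopman operators, hence maps the point spectrum of $R_\a$ bijectively onto that of $T$ and preserves the eigenvalue group with its generators. Consequently $T$ has discrete spectrum, and this spectrum is generated over $\Z$ by the $h$ linearly independent eigenvalues $\l_1,\dots,\l_h$, which is the assertion of the corollary. I do not expect a genuine obstacle here: the content is entirely contained in Theorem~\ref{theorem nsr total translations}, and the remaining work is the routine bookkeeping of Halmos--von Neumann theory, the only substantive remark being the reduction noted above that ergodicity of the realized translation forces the frequency vector, and hence the list of eigenvalues, to be independent.
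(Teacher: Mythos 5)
Your proposal is correct and matches the paper's (implicit) argument: the paper states this result as an ``obvious corollary'' of Theorem~\ref{theorem nsr total translations} with no further proof, the content being exactly the standard Koopman/Halmos--von Neumann facts you spell out, namely that an ergodic translation of $\T^h$ has pure point spectrum generated over $\Z$ by the $h$ multiplicatively independent eigenvalues $\mathrm{e}^{2\pi i\a_j}$ and that a metric isomorphism transports this spectral structure to $T$. Your additional remark that ergodicity forces rational independence of $1,\a_1,\dots,\a_h$ is the right bookkeeping point, but it is routine and does not constitute a departure from the paper's route.
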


There is a conjecture of Kolmogorov in \cite{Kol} stating that on a $d$ dimensional real-analytic manifold an ergodic real-analytic diffeomorphism preserving an analytic measure may have a discrete spectrum with only $d$ distinct eigenvalues. Our result falsifies this conjecture.

Another aspect of the approximation by conjugation scheme deals with the problem of finding diffeomorphisms with a prescribed dynamical property. Originally Anosov and Katok produced examples of measure preserving smooth diffeomorpshims that are weakly mixing on any manifold admitting an effective $\T^1$ action. Later Fayad and Saprykina constructed weakly mixing diffeomorphisms in the restricted space $\mathcal{A}_{\alpha}\left(M\right)= \overline{\left\{h \circ R_{\alpha} \circ h^{-1} \ : h \in \text{Diff}^{\infty}\left(M, \mu \right)\right\}}^{C^{\infty}}$ for every Liouvillean number $\alpha$ (\cite{FS}) in case of dimension $2$. In case of the disc $\mathbb{D}^2$ and the annulus $\mathbb{A}$ this gives the dichotomy that a number is Diophantine if and only if there is not ergodic $C^{\infty}$-diffeomorphism with that rotation number. In that paper \cite{FS}, the authors were even able to construct examples of weakly mixing real analytic diffeomorphims of the two dimensional torus for rotation numbers $\alpha$ that satisfy a condition of similar type as our above one (namely that for some $\delta>0$ the equation $\abs{\alpha- \frac{p}{q}} < \exp(-q^{1+\delta})$ has an infinite number of relatively prime integer solution $p,q$). We should note that the method of reparametrization of linear flows as in \cite{F} is more appropriate to get weakly mixing analytic diffeomorphisms on $\T^d$. 

Using the AbC method and the concept of real-analytic approximation of block-slide type maps, the second author showed that on a torus of any dimension greater than one there are examples of weakly mixing real-analytic diffeomorphims preserving a measurable Riemannian metric.

\begin{theorem}[\cite{Ku-Wm}]
For any $\rho > 0$ and any integer $d \geq  2$, there are weakly mixing real-analytic diffeomorphisms $T \in \text{Diff }^\omega_\rho(\T^d,\mu)$ preserving a measurable Riemannian metric.
\end{theorem}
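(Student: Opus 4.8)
The plan is to carry out an AbC construction tailored to produce weak mixing while simultaneously preserving a measurable Riemannian metric, following the scheme recalled in the introduction but replacing the smooth conjugation maps by real-analytic approximations of block-slide type maps. First I would set up the inductive construction: at stage $n$ one has a rational rotation $\phi^{\alpha_n}$ with $\alpha_n = p_n/q_n$, a conjugation $H_n = h_n \circ H_{n-1}$ commuting appropriately with $\phi^{\alpha_{n-1}}$, and $T_n = H_n^{-1} \circ \phi^{\alpha_n} \circ H_n$. The new $h_n$ should be chosen to be a real-analytic diffeomorphism that approximates (in the $C^0$ sense, with control on the complex width $\rho$) a block-slide type map which shears the fundamental domain of $\phi^{\alpha_{n-1}}$ finely in some direction; this is exactly where the concept of real-analytic approximation of block-slide type maps from the earlier part of the excerpt is invoked, to guarantee that each $h_n$ (and its inverse) extends holomorphically to a complex neighborhood of fixed diameter $\rho$, so the limit $T = \lim T_n$ stays in $\text{Diff}^\omega_\rho(\T^d,\mu)$.

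Next I would specify the combinatorial/geometric data so that two properties hold in the limit. For weak mixing, I would use the standard criterion: it suffices that for a dense family of measurable sets, the iterates $T^{m_n}$ spread the set uniformly across the torus; this is arranged by choosing $k_{n-1}$ (the number of fine strips) large and the shearing direction of the block-slide map at stage $n$ to be transverse to that at stage $n-1$, so that after conjugation the orbit of $\phi^{\alpha_n}$ on the $H_n$-image of a small box equidistributes. For the invariant measurable Riemannian metric, I would push forward a fixed smooth metric (e.g. the flat one) by the conjugations: since each $T_n = H_n^{-1}\phi^{\alpha_n}H_n$ preserves $(H_n)^*(\text{flat metric})$, and one arranges that $(H_n)^*g_0$ converges in an appropriate weak/$L^1$ sense to a measurable metric $g_\infty$ (this requires controlling $\|Dh_n\|$ in $L^1$ or in measure even though it blows up in sup norm), the limit $T$ preserves $g_\infty$. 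The parameters $s_{n-1}$ are then taken large enough at the end of each stage to force $d_{C^\omega_\rho}(T_n, T_{n-1}) < 2^{-n}$, ensuring convergence.

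The main obstacle is reconciling the two requirements: weak mixing forces the conjugations $H_n$ to distort geometry more and more violently (large derivatives), while preservation of a measurable metric requires that this distortion, although unbounded pointwise, remains tame in an integrated sense so that the pulled-back metrics form a Cauchy sequence in measure. Concretely one must choose the block-slide approximations so that $\|DH_n\|$ grows but the set where it is large has measure shrinking fast enough, and so that the analytic error introduced by approximating the block-slide map (which, unlike the exact block-slide map, is not piecewise isometric) does not accumulate either in the $C^\omega_\rho$ estimates or in the metric convergence. Balancing the three scales $k_{n-1}$ (for mixing), the approximation error of the analytic block-slide map (for analyticity and for metric control), and $s_{n-1}$ (for $C^\omega_\rho$-convergence) is the delicate bookkeeping at the heart of the proof; once a consistent choice is made, weak mixing follows from the equidistribution criterion and the invariant metric from the convergence of $(H_n)^*g_0$, giving $T \in \text{Diff}^\omega_\rho(\T^d,\mu)$ with the stated properties. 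I would remark that the details largely parallel \cite{Ku-Wm}, so here one would either cite that reference or reproduce its argument with the block-slide approximation lemma in place.
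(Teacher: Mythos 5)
Your outline matches the intended argument: in this paper the theorem is only quoted from \cite{Ku-Wm}, and the proof there is exactly the AbC scheme you describe, with the conjugation maps taken to be real-analytic approximations of block-slide type maps (Proposition \ref{proposition approximation}), weak mixing arranged through the combinatorics of the block-slide permutations, and the invariant measurable Riemannian metric obtained as the limit of the pulled-back flat metrics $H_n^{\ast}g_0$, where the needed control is precisely the derivative bound \ref{derivative bound} of Lemma \ref{lemma approx} ensuring $Dh_n$ is close to an isometry outside sets of small measure. So your proposal is correct in outline and follows essentially the same route as the cited proof.
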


This result solved \cite{GK}, Problem 3.9., about the existence of real-analytic volume-preserving
IM-diffeomorphisms (i. e. diffeomorphisms preserving an absolutely continuous probability measure
and a measurable Riemannian metric) in the case of tori. In this before-mentioned paper \cite{GK}, Gunesch and Katok constructed volume-preserving weakly mixing $C^{\infty}$-diffeomorphisms preserving a measurable Riemannian metric (see also \cite{KG} for the same result in the restricted spaces $\mathcal{A}_{\alpha}(M)$ for arbitrary Liouville number $\alpha$) and gave a comprehensive consideration of IM-diffeomorphisms and IM-group actions. In particular, the existence of a measurable invariant metric for a diffeomorphism is equivalent to the existence
of an invariant measure for the projectivized derivative extension which is absolutely continuous
in the fibers. Recently, the second author examined the ergodic behaviour of the derivative extension with respect to such a
measure (\cite{Ku-Der}). It provides the only known examples of measure-preserving diffeomorphisms
whose differential is ergodic with respect to a smooth measure in the projectivization of
the tangent bundle. It is an interesting open problem to exhibit such an examination in the real-analytic case.

In another version (called ``toplogical version'' in \cite{FK}) of the AbC-method one tries to exercise control over every orbit of the initial $\T^1$ action, while the original construction by Anosov and Katok was only able to exercise control over almost every orbit of the $\T^1$ action. Such topological constructions deal with minimality and the number of ergodic invariant measures (e. g. unique ergodicity) for intance (see e. g. \cite{FH}, \cite{FSW}, \cite{Win}). We can use such techniques and prove that the limiting diffeomorphims obtained in Theorem \ref{nsr circle rotation} are in fact uniquely ergodic with respect to the Lebesgue measure (\cite[Theorem 1.1.]{Ba-Ns}): For any $\rho>0$ , there exist uniquely ergodic real-analytic diffeomorphisms $T\in\text{Diff }^\omega_\rho(\T^2, \mu )$ which are metrically isomorphic to some irrational rotations of the circle. We note that minor modifications will extend the above result to the $d$ dimensional torus but we do not do so in this article. Instead we can produce more exotic examples. We show that there are minimal but not uniquely ergodic measure preserving real-analytic diffeomorphisms. 

\begin{maintheorem}\label{theorem prescribed no of measures}
For any $\rho > 0$, and any natural number $r$, there exists a real-analytic diffeomorphism $T \in \text{Diff }^\omega_\rho(\T^2,\mu)$ which is minimal and has exactly $r$ ergodic invariant measures each of which are absolutely continuous with respect to the Lebesgue measure.
\end{maintheorem}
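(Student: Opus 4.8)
The plan is to run the topological version of the real-analytic AbC scheme, building $T = \lim_n T_n$ with $T_n = H_n^{-1}\circ\phi^{\alpha_n}\circ H_n$, where $\phi^t$ is the action by translation in the first coordinate of $\T^2$, $\alpha_n = p_n/q_n \to \alpha$, and $H_n = h_n\circ H_{n-1}$ with $h_n$ a real-analytic $\mu$-preserving diffeomorphism commuting with $\phi^{\alpha_{n-1}}$. The conjugation maps $h_n$ will be built, as in Theorem~\ref{nsr circle rotation} and \cite{Ba-Ns}, by real-analytic approximation of block-slide type maps, so that $T_n$ stays in $\diffr$ for a fixed $\rho$ and the sequence converges in the real-analytic topology to $T\in\diffr$; the convergence rate is controlled, as usual, by choosing $s_{n-1}$ (equivalently $q_n$) large at each stage. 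The new feature compared to the uniquely ergodic construction is that we must \emph{prevent} the orbit closures from equidistributing with respect to $\mu$ alone, and instead force exactly $r$ ergodic invariant measures, each absolutely continuous.

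The mechanism I would use: fix a partition of $\T^2$ into $r$ disjoint ``horizontal'' sets $\Omega_1,\dots,\Omega_r$ of equal measure $1/r$ (e.g.\ $\Omega_j$ the preimage under the second coordinate of an interval $I_j$ in a partition of the circle into $r$ equal arcs), chosen so that each $\Omega_j$ is invariant under the circle action $\phi^t$. Arrange that each conjugation map $h_n$ \emph{preserves each $\Omega_j$ setwise} — this is compatible with block-slide type maps if the block structure in the second coordinate refines the partition $\{I_j\}$ — so that every $T_n$, and hence $T$, preserves each $\Omega_j$. Normalised Lebesgue measure on each $\Omega_j$ is then a $T$-invariant absolutely continuous probability measure, giving at least $r$ ergodic a.c.\ measures once we show $T$ restricted to each $\Omega_j$ is uniquely ergodic (hence ergodic) with Lebesgue as its unique invariant measure. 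For the upper bound: any ergodic invariant measure $\nu$ must, by minimality plus the combinatorial control of the AbC construction, be supported on $\T^2$ and be one of the $r$ pieces; more precisely, the standard AbC equidistribution estimates — applied ``relative to the partition'' — force $\nu(\Omega_j)\in\{0,1\}$ for each $j$ and, on the piece where $\nu(\Omega_j)=1$, force $\nu = \mu|_{\Omega_j}/\mu(\Omega_j)$. Minimality of $T$ is obtained exactly as in the topological AbC constructions of \cite{FSW}, \cite{Ba-Ns}: at step $n$ one makes the $\phi^{\alpha_n}$-orbit of a suitable finite set $(1/q_n)$-dense after pulling back by $H_n$, and these requirements are consistent across steps because $h_{n+1}$ commutes with $\phi^{\alpha_n}$.

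Concretely the steps are: (1) set up the $r$-fold partition and the class of block-slide type maps respecting it, and recall from the earlier sections the real-analytic approximation lemma that produces $h_n\in\diffr$ close to a prescribed block-slide map with explicit complexity bounds; (2) state the inductive requirements on $(h_n,q_n)$: closeness $d_{\text{an}}(T_n,T_{n-1})<2^{-n}$ in the $\rho$-analytic metric, a minimality/density requirement at scale $1/q_n$, and an equidistribution requirement ensuring that on each $\Omega_j$ the pieces of the $H_n$-image of the dynamical partition become uniformly distributed; (3) verify the inductive step is solvable — choose the block-slide combinatorics to meet the minimality and equidistribution demands, approximate analytically, then pick $q_n$ large for convergence; (4) pass to the limit to get $T\in\diffr$, preserving each $\Omega_j$ and minimal; (5) prove $T|_{\Omega_j}$ is uniquely ergodic with invariant measure $\mu|_{\Omega_j}$ via the usual AbC argument that Birkhoff averages of continuous functions converge, using the equidistribution built in step (2); (6) conclude the ergodic invariant measures of $T$ are exactly $\{\,r\cdot\mu|_{\Omega_j}\,\}_{j=1}^r$.

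The main obstacle I expect is step (5) combined with the simultaneous demands of steps (2)–(3): one must force \emph{genuine} equidistribution within each $\Omega_j$ (so that no further splitting of invariant measures occurs and the limit has no singular invariant measures) while \emph{simultaneously} keeping the orbit from mixing the $\Omega_j$ together, all while respecting the rigidity of real-analytic block-slide approximation — block-slide maps have limited flexibility in the ``stretching'' direction, so the partition elements used for the unique-ergodicity estimate must be chosen with some care (likely as thin vertical strips within each $\Omega_j$, permuted by a block-slide map) to make the combinatorics close. Establishing that exactly these combinatorial moves are realizable analytically in a fixed width-$\rho$ strip, uniformly in $n$, is the crux; everything else is a routine adaptation of the now-standard AbC bookkeeping.
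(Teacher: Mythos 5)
There is a genuine gap, and it sits at the heart of your mechanism. You propose to arrange that every conjugation map $h_n$ preserves each horizontal set $\Omega_j$ setwise, ``so that every $T_n$, and hence $T$, preserves each $\Omega_j$.'' But if $T$ preserves each $\Omega_j$, then (since $T_n \to T$ uniformly) the closure $\overline{\Omega_j} = \T^1 \times \overline{I_j}$ is a closed, proper, $T$-invariant subset of $\T^2$, so no orbit starting in $\Omega_j$ can be dense in $\T^2$ and $T$ cannot be minimal --- which is exactly the property the theorem demands. The whole difficulty of ``minimal but not uniquely ergodic'' is that one must \emph{not} have closed invariant sets carrying the different measures; your step (5) (unique ergodicity of $T|_{\Omega_j}$) only makes sense if the $\Omega_j$ are genuinely invariant, and then minimality of $T$ on $\T^2$ is lost. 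So the two halves of your plan are mutually incompatible as stated, and the ``main obstacle'' you flag (equidistribution within each piece while keeping the pieces separate) is not merely delicate, it is impossible in the exact form you set up.

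The construction in the paper resolves this tension differently: the conjugation map $h_{n+1} = h_{\mathfrak{1},n+1}\circ h_{\mathfrak{2},n+1}$ deliberately \emph{intermingles} the strips $N_t$, but only on narrow vertical stripes whose measure tends to $0$ (this is what produces minimality, via the trapping map $h_{\mathfrak{1},n+1}$, Lemma \ref{lem trap} and the criterion of Lemma \ref{lem:critmin}), while on the complementary, measure-theoretically dominant part it maps rectangles into small-diameter rectangles that stay inside the same $N_t$. Consequently the sets $H_n^{-1}(N_t)$ are \emph{not} invariant sets; they are only a Cauchy sequence in the measure algebra (estimate \ref{eq:cauchy}), whose limits $B_t$ are measurable, pairwise measurably disjoint sets carrying the absolutely continuous invariant measures $\xi_t = \lim_n (H_n^{-1})_{\ast}\mu_t$. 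The absence of any further ergodic measure is then not obtained from unique ergodicity on invariant pieces, but from the Birkhoff-sum estimates of Lemmas \ref{lem:birk} and \ref{lem:birk2}: for every point, the Birkhoff averages along times $q_{n+1}$ are asymptotically captured by the simplex spanned by $\xi^n_0,\dots,\xi^n_{r-1}$, so any ergodic measure must be an extreme point of the limiting simplex, i.e.\ one of the $\xi_t$. To repair your proposal you would have to abandon exact invariance of the $\Omega_j$, introduce a small-measure mixing region plus a trapping mechanism for every orbit, and replace step (5) by such a quantitative Birkhoff-sum argument.
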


This result parallels a result of Windsor in the smooth category (\cite{Win}). While conversely a uniquely ergodic transformation on a compact metric space preserving a Borel measure is minimal on the support of the measure (e.g. \cite{KH}, Proposition 4.1.18), the first example that minimality does not imply unique ergodicity is due to Markov (see \cite{NS}, section 9.35.). In the analytic category Furstenberg constructed skew-products admitting uncountably many ergodic measures (\cite{Fu} or see \cite{KH}, Corollary 12.6.4.). In fact, these counterexamples bear a great meaning in the history of Ergodic Theory: They showed that the so-called \textit{quasi-ergodic hypothesis} (i. e. each orbit is dense in each surface of constant energy) does not imply the equality of space means and time means and so helped to find the right notion of ergodicity. 

In a forthcoming paper the authors use the AbC-method with the real-analytic approximation of block-slide type maps to construct $T \in \text{Diff}^{\omega}_{\rho}\left( \T^d,\mu\right)$ with disjoint convolutions and a homogeneous spectrum of multiplicity $2$ for its Cartesian square $T \times T$. In \cite{Ku-Dc}, $C^{\infty}$-diffeomorphisms in $\mathcal{A}_{\alpha}(M)$ with these properties were constructed.

\section{Preliminaries}

Here we introduce the basic concepts and establish notations that we will use for the rest of this article. 

For a natural number $d$, we will denote the $d$ dimensional torus by $\T^d:=\R^d/\Z^d$. The standard Lebesgue measure on $\T^d$ will be denoted by  $\mu$. We define $\phi$, a measure preserving $\T^1$ action on the torus $\T^d$ as follows:
\begin{align}
\phi^t(x_1,\ldots, x_d)=(x_1+t,x_2,\ldots, x_d)
\end{align}

\subsection{The topology of real-analytic diffeomorphisms on the torus}

We give a description of the space of diffeomorphisms that are interesting to us.
Any real-analytic diffeomorphism on $\T^d$ homotopic to the identity admits a lift to a map from $\R^d$ to $\R^d$ and has the following form 
\begin{align} 
F(x_1,\ldots , x_d)=(x_1+f_1(x_1,\ldots, x_d),\ldots,x_d+f_d(x_1,\dots,x_d))
\end{align}
where $f_i:\R^d\to \R$ are $\Z^d$-periodic real-analytic functions. Any real-analytic $\Z^d$-periodic function defined on $\R^d$ can be extended to some complex neighbourhood \footnote{we identify $\R^d$ inside $\C^d$ via the natural inclusion $(x_1,\ldots , x_d)\mapsto (x_1+i0,\ldots ,x_d+i0)$.} of $\R^d$  as a holomorphic (complex analytic) function. For a fixed $\rho>0$, let
\begin{align}
\Omega_\rho:=\{(z_1,\ldots,z_d)\in\C^d:|\text{Im}(z_1)|<\rho ,\ldots, |\text{Im}(z_d)|<\rho\}
\end{align}
 and for a function $f$ defined on this set, put 
 \begin{align}
 \|f\|_\rho:=\sup_{(z_1,\ldots, z_d)\in\Omega_\rho}|f(z_1,\ldots, z_d)|
 \end{align}
 We define  $C^\omega_\rho(\T^d)$ to be the space of all $\Z^d$-periodic real-analytic functions on $\R^d$ that extends to a holomorphic function on $\Omega_\rho$ and $\|f\|_\rho<\infty$.

We define, $\text{Diff }^\omega_\rho(\T^d,\mu)$ to be the set of all measure preserving real-analytic diffeomorphisms of $\T^d$ homotopic to the identity, whose lift $F(x)=(x_1+f_1(x),\ldots,x_d+f_d(x))$ to $\R^d$ satisfies $f_i\in C^\omega_\rho(\T^d)$ and we also require the lift $\tilde{F}(x)=(x_1+\tilde{f}_1(x),\ldots,x_d+\tilde{f}_d(x))$ of its inverse to $\R^d$ to satisfies $\tilde{f}_i\in C^\omega_\rho(\T^d)$.
The metric $d$ in $\text{Diff }^\omega_\rho(\T^d,\mu)$ is defined by 
\begin{align*}
d_\rho(f,g)=\max\{\tilde{d}_\rho(f,g),\tilde{d}_\rho(f^{-1},g^{-1})\} \qquad\text{where}\qquad \tilde{d}_\rho(f,g)=\max_{i=1,\ldots, d}\{\inf_{n\in\Z}\|f_i-g_i+n\|_\rho\}
\end{align*}
Let $F=(F_1,\ldots, F_d)$ be the lift of a diffeomorphism in $\text{Diff }^\omega_\rho(\T^d,\mu)$, we define the norm of the total derivative
\begin{align*}
\|DF\|_\rho:=\max_{\substack{i=1,\ldots, d\\j=1,\ldots, d}}\Big\|\frac{\partial F_i}{\partial x_j}\Big\|_\rho
\end{align*}

Next, with some abuse of notation, we define the following two spaces 
\begin{align}
C^\omega_\infty (\T^d)  := & \cap_{n=1}^\infty C^\omega_n(\T^d) \label{6.789} \\
\text{Diff }^\omega_\infty (\T^d,\mu)  :=  &  \cap_{n=1}^\infty \text{Diff }^\omega_n(\T^d,\mu) \label{4.569}
\end{align}
Note that the functions in \ref{6.789} can be extended to $\C^n$ as entire functions. We also note that $\text{Diff }^\omega_\infty (\T^d,\mu)$ is closed under composition. To see this, let $f,g\in \text{Diff }^\omega_\infty (\T^d,\mu)$ and $F$ and $G$ be their corresponding lifts. Then note that $F\circ G$ is the lift of $f\circ g$ (with $\pi:\R^2\to\T^2$ as the natural projection, $\pi\circ F\circ G=f\circ\pi\circ G=f\circ g\circ \pi$). Now for the complexification of $F$ and $G$ note that the composition $F\circ G(z)=(z_1+g_1(z)+f_1(G(z)),\ldots, z_d+g_d(z)+f_d(G(z)) )$. Since $g_i\in C^\omega_\infty (\T^d) $, we have for any $\rho,$ $\sup_{z\in\Omega_\rho}|\text{Im}(G(z))|\leq \max_i (\sup_{z\in\Omega_\rho}|\text{Im}(z_i)+\text{Im}(g_i(z))|) \leq \max_i (\sup_{z\in\Omega_\rho}|\text{Im}(z_i)|+\sup_{z\in\Omega_\rho}|\text{Im}(g_i(z))|)\leq \rho + \max_i (\sup_{z\in\Omega_\rho}|g_i(z)|)<\rho + const<\rho'<\infty$ for some $\rho'$. So, $\sup_{z\in\Omega_\rho}|z_i+g_i(z)+f_i(G(z))|\leq |z_i|+|g_i(z)|+|f_i(G(z))|<\infty$ since $z\in\Omega_\rho, g_i\in C^\omega_\infty (\T^d)$ and $G(z)\in \Omega_{\rho'}$. An identical treatment gives the result for the inverse.

All intermediate diffeomorphisms constructed during the AbC method in this paper will belong to this category. \footnote{We note that the existence of such real-analytic functions whose complexification is entire or as in this case, the complexification of their lift is entire is central to a real-analytic AbC method. As of now we only know how to construct such functions on the torus, odd dimensional spheres and certain homogeneous spaces. }

This completes the description of the analytic topology necessary for our construction. Also throughout this paper, the word ``diffeomorphism" will refer to a real-analytic diffeomorphism. Also, the word ``analytic topology" will refer to the topology of $\text{Diff }^\omega_\rho(\T^d,\mu)$ described above. See \cite{S} for a more extensive treatment of these spaces.

\subsection{Some partitions of the torus}

First we recall a few definitions. A sequence of partitions $\{\mathcal{P}_n\}_n$ of a \emph{Lebesgue space}\footnote{Also known as a \emph{standard probability space} or a \emph{Lebesgue-Rokhlin space}. We consider those spaces which are isomorphic mod $0$ to the unit interval with the usual Lebesgue measure.} $(M,\mu)$ is called \emph{generating} if there exists a measurable subset $M'$ of full measure such that $\{x\}=\cap_{n=1}^\infty\mathcal{P}_n(x)\;\;\forall x\in M'$. We say that the sequence $\{\mathcal{P}_n\}_n$ is \emph{monotonic} if $\mathcal{P}_{n+1}$ is a refinement of $\mathcal{P}_n$. 

There are some partitions of $\T^d$ that are of special interest to us. They appear repeatedly in this article and we summarize them here.

Assume that we are given three natural numbers $l,k,q$ and a function $a:\{0,1,\ldots,k-1\}\to\{0,1,\ldots,q-1\}$. We define the following three partitions of $\T^d$:
\begin{align}
& \mathcal{T}_{q}:=\Big\{\Delta_{i,q}:=\big[\frac{i}{q},\frac{i+1}{q}\big)\times \T^{d-1}: i = 0,1,\ldots,q-1\Big\}\label{partition T}\\
& \mathcal{G}_{l,q}:=\Big\{\big[\frac{i_1}{lq},\frac{i_1+1}{lq}\big)\times\big[\frac{i_2}{l},\frac{i_2+1}{l}\big)\times\ldots\times\big[\frac{i_d}{l},\frac{i_d+1}{l}\big):i_1 = 0,1,\ldots,lq-1,\nonumber\\
&\qquad\qquad\qquad\qquad\qquad\qquad\qquad\qquad\qquad\qquad (i_2,\ldots,i_{d}) \in \{0,1,\ldots,l-1\}^{d-1}\Big\}\label{partition G}\\
& \mathcal{G}_{j,l,q}:=\Big\{\big[\frac{i_1}{l^jq},\frac{i_1+1}{l^jq}\big)\times\big[\frac{i_2}{l},\frac{i_2+1}{l}\big)\times\ldots\times\big[\frac{i_{d-j+1}}{l},\frac{i_{d-j+1}+1}{l}\big)\times\T^{j-1}:i_1 = 0,1,\ldots,l^jq-1,\nonumber\\
&\qquad\qquad\qquad\qquad\qquad\qquad\qquad\qquad\qquad\qquad (i_2,\ldots,i_{d-j+1}) \in \{0,1,\ldots,l-1\}^{d-j+1}\Big\}\label{partition Gj}\\
&\mathcal{R}_{a,k,q}:=\Big\{R_{j,q}:=\phi^{j/q}\Big(\bigcup_{i=0}^{k-1}\Delta_{a(i)k+i,kq}\Big), j=0,\ldots,q-1\Big\} \label{partition R}
\end{align} 
We note $\phi^\a$ acts on the partitions \ref{partition T}, \ref{partition G}, \ref{partition Gj} and \ref{partition R} as a permutation for any choice of $p$ when $\a=p/q$.

\subsection{Block-slide type maps and their analytic approximations}

We recall that a \emph{step function} on the unit interval is a finite linear combination of indicator functions on intervals. We define for $1\leq i,j\leq d$ and $i\neq j$, the following piecewise continuous map on the $d$ dimensional torus,
\begin{align}
\mathfrak{h}:\T^d\to\T^d\qquad\text{defined by}\qquad\mathfrak{h}(x_1,\ldots,x_d):=(x_1,\ldots, x_i + s(x_j)\mod 1,\ldots, x_d)
\end{align}
where $s$ is a step function on the unit interval. We refer to any finite composition of maps of the above kind as a \emph{block-slide type of map} on the torus. The nomenclature is motivated from the fact that a finite composition of maps of the above kind has the effect of sliding solid blocks on the torus much like the game of nine.

Inspired by \cite{BK} the purpose of the section is to demonstrate that a block-slide type of map can be approximated extremely well by measure preserving real analytic diffeomorphisms outside a set of arbitrarily small measure. This can be achieved because step function can be approximated well by real analytic functions whose complexification is entire.

\begin{lemma} \label{lemma approx}
Let $k$ and $N$ be two positive integer and $\b=(\b_0,\ldots,\b_{k-1})\in [0,1)^k$. Assume $k$ is even. Consider a step function of the form 
\begin{align*}
\tilde{s}_{\b,N}:[0,1)\to \R\quad\text{ defined by}\quad \tilde{s}_{\b,N}(x)=\sum_{i=0}^{kN-1}\tilde{\b}_i\chi_{[\frac{i}{kN},\frac{i+1}{kN})}(x)
\end{align*}
Here $\tilde{\b}_i:=\b_j$ where $j:=i\mod k$. Then, given any $\e>0$ and $\d>0$, there exists a periodic real-analytic function $s_{\b,N}:\R\to\R$ satisfying the following properties:
\begin{enumerate}
\item Entirety: The complexification of $s_{\b,N}$ extends holomorphically to $\C$.
\item Proximity criterion: $s_{\b,N}$ is $L^1$ close to $\tilde{s}_{\b,N}$. We can say more, \begin{align}\label{nearness}
\sup_{x\in[0,1)\setminus F}|s_{\b,N}(x)-\tilde{s}_{\b,N}(x)|<\e
\end{align}
\item Periodicity: $s_{\b,N}$ is $1/N$ periodic. More precisely, the complexification will satisfy,
\begin{align}\label{boundedness} 
s_{\b,N}(z+n/N)=s_{\b,N}(z)\qquad\forall\; z\in\C\text{ and }n\in\Z
\end{align}
\item Bounded derivative: The derivative is small outside a set of small measure,
\begin{align} \label{derivative bound}
\sup_{x\in[0,1)\setminus F}|s_{\b,N}'(x)|<\e 
\end{align}
\end{enumerate}
Where $F=\cup_{i=0}^{kN-1}I_i\subset [0,1)$ is a union of intervals centred around $\frac{i}{kN},\;i=1,\ldots, kN-1$ and $I_0=[0,\frac{\d}{2kN}]\cup[1-\frac{\d}{2kN},1)$ and $\l(I_i)=\frac{\d}{kN}\;\forall\; i$. 
\end{lemma}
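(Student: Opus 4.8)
The plan is to build $s_{\b,N}$ as a single explicit real-analytic function rather than by a soft approximation argument, so that entirety comes for free. The natural candidate is to first produce, for each fixed real number $\b_j$, a building block that is real-analytic, entire, takes the value $\b_j$ on the bulk of an interval and interpolates across short transition zones. A convenient device is the classical real-analytic step approximation via the error function or, staying elementary and entire, via partial sums of Fourier-type series smoothed by a rapidly decaying factor. Concretely, I would start from the $1$-periodic sawtooth/indicator and mollify: let $\chi$ be the $1$-periodic extension of $\chi_{[0,1/k)}$ and write its Fourier series, then define $\chi_\sigma(x) := \sum_{n} \hat\chi(n) e^{-\sigma n^2} e^{2\pi i n x}$, which is entire in $x$ for every $\sigma>0$ because the Gaussian factor kills the coefficients super-exponentially. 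As $\sigma\to 0$, $\chi_\sigma\to\chi$ uniformly on any compact set avoiding the jump points $0$ and $1/k$, with derivatives that stay bounded off those points (indeed, on the complement of $\delta/(kN)$-neighbourhoods of the jumps one gets uniform convergence of $\chi_\sigma$ and $\chi_\sigma'$ with the derivative of the limit being $0$). Then set
\begin{align*}
s_{\b,N}(x) := \sum_{j=0}^{k-1} \b_j \,\chi_\sigma\!\big(Nx - \tfrac{j}{k}\big),
\end{align*}
rescaled so that the period is $1/N$; choosing $\sigma$ small enough depending on $\e,\d,k,N$ makes all four conclusions hold.

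The steps, in order, are: (1) fix the periodic indicator-type generator and mollify it by a Gaussian-damped Fourier series, verifying it is entire; (2) prove the two quantitative estimates — uniform closeness of the mollified block to the true indicator, and smallness of its derivative — on the complement of the prescribed neighbourhoods of the jump points, obtaining explicit bounds in terms of $\sigma$; (3) assemble $s_{\b,N}$ as the finite $\b_j$-weighted sum of shifted, $N$-rescaled copies, which automatically inherits entirety and exact $1/N$-periodicity from the generator, and note that the set $F$ where control is lost is exactly the union of the $\delta/(kN)$-neighbourhoods of the points $i/(kN)$ (the factor $k$ in $kN$ accounts for the $k$ jump points introduced by the $\b_j$-pattern inside each period $1/N$, plus the boundary interval $I_0$); (4) choose $\sigma = \sigma(\e,\d,k,N)$ small enough that the bounds from step (2) drop below $\e$ on $[0,1)\setminus F$, and record that, since each $\tilde\b_i$ equals $\b_{i\bmod k}$ with $k$ even, the combinatorics match the stated step function $\tilde s_{\b,N}$ exactly.

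The main obstacle I expect is step (2): getting a clean, honest bound on $\sup_{x\notin F}|\chi_\sigma(x)-\chi(x)|$ and especially on $\sup_{x\notin F}|\chi_\sigma'(x)|$ that is small \emph{after} one has already committed to the neighbourhood size $\delta/(kN)$. Away from the jumps the indicator is locally constant, so the issue is purely that the Gibbs-type oscillation of the Gaussian-smoothed Fourier series must be pushed below $\e$ uniformly on a fixed (not shrinking with $\sigma$) compact complement; this is where one needs either a careful stationary-phase/heat-kernel estimate on $|\sum_n \hat\chi(n)e^{-\sigma n^2}e^{2\pi i nx}|$ away from the singular support, or equivalently the observation that $\chi_\sigma = \chi * p_\sigma$ with $p_\sigma$ the periodized heat kernel and that $p_\sigma$ is concentrated at scale $\sqrt\sigma \ll \delta/(kN)$, so the convolution only feels $\chi$'s constant behaviour. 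Once that heat-kernel-concentration estimate is in hand, the derivative bound follows the same way since $\chi_\sigma' = \chi * p_\sigma'$ and $\chi$ is constant near $x$, making the integral of $p_\sigma'$ against a constant exponentially small. The remaining steps — entirety, periodicity, bookkeeping of $F$ — are then routine.
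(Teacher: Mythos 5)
Your construction is correct, but it is a genuinely different route from the paper's. The paper proves this lemma (see also the appended proofs and Lemma \ref{lem:approx}) by writing down a completely explicit entire function built from products of terms $\ee^{-\ee^{-A\sin(2\pi(Nx-i/k))}}$ with a large parameter $A$; the evenness of $k$ is used to split the sum into two halves multiplied by $\ee^{-\ee^{\mp A\sin(2\pi Nx)}}$, and the proximity and derivative bounds follow from elementary estimates on these double exponentials as $A\to\infty$. (A Fourier-truncation argument is mentioned there as an alternative, but only for the statement without the derivative bound.) You instead mollify the exact step pattern by Gaussian damping of Fourier coefficients, i.e.\ convolution with the periodized heat kernel $p_\sigma$, and your key step is sound: entirety and exact $1/N$-periodicity are immediate from the damped series, the identity $\sum_j \b_j\chi(Nx-j/k)=\tilde s_{\b,N}(x)$ holds exactly, and since $p_\sigma$ is a positive kernel concentrated at scale $\sqrt{\sigma}\ll \delta/(2k)$ (in the rescaled variable $u=Nx$) there is no Gibbs obstruction; the near-field contribution to $\chi*p_\sigma'$ at a point where $\chi$ is locally constant cancels (or is controlled) and the far-field tail of $p_\sigma$ and $p_\sigma'$ is exponentially small in $\delta^2/(k^2\sigma)$, so choosing $\sigma$ small enough (absorbing the harmless factors $k$ and $N$ from the sum and the chain rule) gives both \ref{nearness} and \ref{derivative bound}. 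Your approach buys simplicity and generality -- it delivers all four conclusions at once and does not need $k$ even -- whereas the paper's explicit formula buys something it actually uses later: the quantitative growth and Lipschitz estimates on complex strips in terms of the single parameter $A$ (Lemmas \ref{lem:A} and \ref{lem:est}), which drive the choice of $q_n$ and the description of the admissible rotation numbers $\mathcal{L}_{\ast}$. If you adopted your construction you would have to redo those estimates with $\sigma$ in place of $A$ (bounds of the shape $\sigma^{-1/2}\ee^{\pi^2\rho^2 N^2/\sigma}$ on $\Omega_\rho$ are available, so this is feasible, but it is extra work the paper avoids by fixing the explicit family once and for all).
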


\begin{proof}
See \cite[Lemma 4.7]{Ba-Ns} and \cite[Lemma 3.6]{Ku-Wm}.
\end{proof}

Note that the condition \ref{boundedness} in particular implies 
\begin{align*}
\sup_{z: \text{Im}(z)<\rho}s_{\b,N}(z)<\infty\quad\forall\; \rho>0
\end{align*}
Indeed, for any $\rho>0$, put $\Omega'_\rho=\{z=x+iy:x\in [0,1], |y|<\rho\}$ and note that entirety of $s_{\b,N}$ combined with compactness of $\overline{\Omega'_\rho}$ implies $\sup_{z\in\Omega'_\rho}|s_{\b,N}(z)|<C$ for some constant $C$. Periodicity of $s_{\a,N}$ in the real variable and the observation $\Omega_\rho=\cup_{n\in\Z}\left(\Omega'_\rho + n\right)$ implies that $\sup_{z\in\Omega_\rho}|s_{\b,N}(z)|<C$. We have essentially concluded that  $s_{\b,N}\in C^\omega_\infty(\T^1)$. 

We also make the observation that the condition $k$ is even is not really a necessary one. One can drop the condition after replacing $F$ with a different error set. 

In order to prove convergence with a prescribed rotation number we require the following refinement of the lemma:

\begin{lemma} \label{lem:approx}
Let $l, N \in \mathbb{N}$, $l$ even, and $\beta=\left(\beta_0,...,\beta_{l-1}\right) \in \left[0,1\right]^l$. We consider a step function of the form
\begin{equation*}
\tilde{s}_{\beta,N}:\left[0,1\right) \rightarrow \mathbb{R} \text{ defined by } \tilde{s}_{\beta,N}(x) = \sum^{lN-1}_{i=0} \tilde{\beta}_i \cdot \chi_{\left[\frac{i}{lN},\frac{i+1}{lN}\right)}(x),
\end{equation*}
where $\tilde{\beta}_i \coloneqq \beta_j$ in case of $j \equiv i \mod l$. Given any $\varepsilon \in \left( 0, \frac{1}{8} \right)$ and $\delta\in (0,1)$ let the number $A>0$ fulfil the conditions
\begin{equation} \tag{A1} \label{eq:app1}
A > - \frac{2l}{\pi \cdot \delta} \cdot \ln \left(- \ln \left( 1-\frac{\varepsilon}{8} \right) \right)
\end{equation}
and
\begin{equation} \tag{A2} \label{eq:app2}
A> \frac{2l}{\pi \cdot \delta} \cdot \ln \left(-\ln\left(\frac{\varepsilon}{2l}\right)\right).
\end{equation}
Then the $\frac{1}{N}$-periodic real entire function $s_{\beta, N, \varepsilon, \delta}$ given by
\begin{align*}
& s_{\beta, N, \varepsilon, \delta}(z) =  \\
& \left(\sum^{\frac{l}{2}-1}_{i=0} \beta_i \cdot \left(\ee^{-\ee^{-A \cdot \sin\left(2\pi\left(Nz- \frac{i}{l}\right)\right)}} - \ee^{-\ee^{-A \cdot \sin\left(2\pi\left(Nz- \frac{i+1}{l}\right)\right)}}\right)\right) \cdot \ee^{-\ee^{-A \cdot \sin\left(2\pi Nz\right)}} \\
& +\left(\sum^{l-1}_{i=\frac{l}{2}} \beta_{i} \cdot \left(\ee^{-\ee^{-A \cdot \sin\left(2\pi\left(Nz- \frac{i}{l}\right)\right)}} - \ee^{-\ee^{-A \cdot \sin\left(2\pi\left(Nz- \frac{i+1}{l}\right)\right)}}\right)\right) \cdot \ee^{-\ee^{A \cdot \sin\left(2\pi Nz\right)}}.
\end{align*}
satisfies
\begin{equation} \label{eq:condapprox}
\sup_{x \in [0,1) \setminus F} \left| s_{\beta, N, \varepsilon, \delta}(x) - \tilde{s}_{\beta,N}(x) \right| < \varepsilon,
\end{equation}
where $F= \bigcup^{lN-1}_{i=0} I_i \subset [0,1)$ is a union of intervals centered around $\frac{i}{lN}$, $i=1,...,lN-1$, $I_0 = \left[0, \frac{\delta}{2lN}\right]\cup \left[1-\frac{\delta}{2lN},1\right)$ and $\lambda\left(I_i\right)= \frac{\delta}{lN}$ for every $i$.
\end{lemma}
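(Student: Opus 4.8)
The plan is to verify the single estimate \eqref{eq:condapprox} by directly analyzing the explicit formula for $s_{\beta,N,\varepsilon,\delta}$ on each subinterval of $[0,1)\setminus F$, using the fact that the building block $\ee^{-\ee^{-A\sin(2\pi Nz)}}$ is an entire, $\frac1N$-periodic real function that is extremely close to the indicator of $\{\sin(2\pi Nx)>0\}$ away from the zeros of the sine. First I would record the basic one-dimensional facts: the function $g_A(u) := \ee^{-\ee^{-A\sin(2\pi u)}}$ satisfies $g_A(u)\to 1$ rapidly as $\sin(2\pi u)$ stays bounded away from $0$ on the positive side, and $g_A(u)\to 0$ rapidly on the negative side; quantitatively, if $\sin(2\pi u)\ge c>0$ then $1-g_A(u)\le \ee^{-\ee^{Ac}}$, and if $\sin(2\pi u)\le -c<0$ then $g_A(u)\le \ee^{-\ee^{Ac}}\le\ee^{-1}\cdot(\text{something small})$, while more crudely $g_A(u)\le \ee^{-A c'}$ type bounds also hold. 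Entirety and $\frac1N$-periodicity of $s_{\beta,N,\varepsilon,\delta}$ are immediate from the formula (composition and products of entire $\frac1N$-periodic functions), so the only real content is the sup bound.

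The key step is the combinatorial/geometric bookkeeping on $[0,1)\setminus F$. By $\frac1N$-periodicity it suffices to work on one period $[0,\frac1N)$ minus the relevant excised intervals, i.e. on the points $x$ with $Nx\in[0,1)$ at distance $\ge \frac{\delta}{2l}$ from each $\frac{i}{l}$, $i=0,\dots,l-1$. Fix such an $x$ and let $m\in\{0,\dots,l-1\}$ be the index with $\frac{m}{l}<Nx<\frac{m+1}{l}$; then $\tilde s_{\beta,N}(x)=\beta_m$. On the factor $\ee^{-\ee^{-A\sin(2\pi Nz)}}$ versus $\ee^{-\ee^{A\sin(2\pi Nz)}}$: exactly one of these is near $1$ and the other near $0$ according to the sign of $\sin(2\pi Nx)$, which is positive for $m\in\{0,\dots,\frac l2-1\}$ and negative for $m\in\{\frac l2,\dots,l-1\}$ — this is precisely why the sum is split into two halves, so that only the half corresponding to the correct sign "survives." Within the surviving half, I would show that the telescoping-type inner sum $\sum_i\beta_i\big(\ee^{-\ee^{-A\sin(2\pi(Nz-i/l))}}-\ee^{-\ee^{-A\sin(2\pi(Nz-(i+1)/l))}}\big)$ evaluates, up to an error controlled by \eqref{eq:app1} and \eqref{eq:app2}, to $\beta_m$: each difference $\ee^{-\ee^{-A\sin(2\pi(Nx-i/l))}}-\ee^{-\ee^{-A\sin(2\pi(Nx-(i+1)/l))}}$ is near $1$ when $\frac il<Nx<\frac{i+1}l$ and near $0$ otherwise, because for $Nx$ in that window the shifted argument $2\pi(Nx-i/l)$ lies in $(0,\pi)$ bounded away from the endpoints (distance $\ge \delta/(2l)$ in the $u$-variable, hence $\sin\ge \sin(\pi\delta/(2l))\ge \delta/l$ or a comparable lower bound after a $\sin\theta\ge \tfrac{2}{\pi}\theta$ estimate), so the first exponential is $\ge 1-\ee^{-\ee^{A\cdot(\text{const}\cdot\delta/l)}}$ while the second is $\le \ee^{-\ee^{A\cdot(\text{const}\cdot\delta/l)}}$; summing over the $\le l$ terms contributes the factor $l$ and the $\frac{\varepsilon}{2l}$ in \eqref{eq:app2}, and the leftover factor $\ee^{-\ee^{\pm A\sin(2\pi Nx)}}$ being within $\frac\varepsilon8$ of $1$ uses \eqref{eq:app1}. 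Assembling these (product of something within $\frac\varepsilon8$ of $1$, inner sum within roughly $\frac\varepsilon2$ of $\beta_m$, plus the genuinely small "wrong sign" half) and using $\varepsilon<\frac18$, $\beta_i\in[0,1]$ to absorb cross terms gives the total error $<\varepsilon$.

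The main obstacle is purely bookkeeping: one must track the several sources of error — the $\lesssim l$ telescoped differences, the outer sign-selecting factor, and the contribution of the "wrong" half — and check that conditions \eqref{eq:app1} and \eqref{eq:app2} are exactly what make each piece small enough that they sum to less than $\varepsilon$, with the correct constants (the factor $\frac{2l}{\pi\delta}$ coming from the estimate $\sin(2\pi u)\ge \tfrac{2}{\pi}\cdot 2\pi\cdot\mathrm{dist}\ge \tfrac{4\pi\delta}{2l}$-type inequalities near the relevant breakpoints, or more simply $|\sin(2\pi u)|\ge \sin(\pi\delta/l)\ge \delta/l$ for $u$ at distance $\ge \delta/(2l)$ from $\frac12\mathbb Z$, combined with the requirement $\ee^{-\ee^{A\delta/l\cdot\text{const}}}<\frac{\varepsilon}{8}$ or $<\frac{\varepsilon}{2l}$). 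There is no conceptual difficulty beyond organizing these estimates carefully; the analyticity and periodicity claims are immediate, and the structure of the argument is essentially identical to Lemma \ref{lemma approx}, the only new feature being that the coefficients $\beta_i$ are retained (rather than approximating a $\{0,1\}$-type pattern) and that one must handle both signs of $\sin(2\pi Nx)$, which is handled by the two-block decomposition built into the statement.
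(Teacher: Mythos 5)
Your proposal is correct and follows essentially the same route as the paper's proof: fix $x$ in the good set, use a linear lower bound on $\sin$ at distance $\ge\frac{\delta}{2lN}$ from the breakpoints so that conditions \ref{eq:app1} and \ref{eq:app2} make each factor $\ee^{-\ee^{-A\sin(2\pi(Nx-s/l))}}$ either $>1-\frac{\varepsilon}{8}$ or $<\frac{\varepsilon}{2l}$ according to the relative position of $Nx$ and $\frac{s}{l}$, let the two outer sign-selecting factors kill the wrong half, and tally the $\le l$ small contributions plus the near-$1$ factors to sandwich $s_{\beta,N,\varepsilon,\delta}(x)$ within $\varepsilon$ of $\beta_j$. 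The only blemishes are cosmetic (e.g. writing tail bounds like $1-g_A\le\ee^{-\ee^{Ac}}$ where the correct elementary statement is $g_A\ge\ee^{-\ee^{-Ac}}$, which \ref{eq:app1} then converts to $>1-\frac{\varepsilon}{8}$), and they do not affect the argument.
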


\begin{proof}
First of all, we point out that $s_{\beta, N, \varepsilon, \delta}$ is a $\frac{1}{N}$-periodic real entire function. Let $x \in [0,1) \setminus F$, namely $x \in \left[ \frac{j}{lN}+\frac{\delta}{2lN}, \frac{j+1}{lN}-\frac{\delta}{2lN} \right]$ for some $j \in \mathbb{Z}$, $0 \leq j \leq lN-1$. We write $x=\frac{j}{lN}+ \Delta$. Exploiting the fact $\sin(x) > \frac{x}{2}$ for $0<x < \frac{\pi}{2}$ we get 
\begin{equation*}
\ee^{-\ee^{-A \cdot \sin(2\pi N \Delta)}} \geq \ee^{-\ee^{-A \cdot \pi N \Delta}}.
\end{equation*}
Using equation \ref{eq:app1} this implies
\begin{equation} \label{eq:est1}
\ee^{-\ee^{-A \cdot \sin\left( 2 \pi \left( Nx- \frac{s}{l} \right) \right)}}> 1 - \frac{\varepsilon}{8}
\end{equation}
in case of $0 \leq j-s < \frac{l}{2}$ or $-l < j-s < -\frac{l}{2}$. On the other hand, we use the fact $\sin(x) < \frac{x}{2}$ for $-\frac{\pi}{2} < x < 0$ and get
\begin{equation*}
\ee^{-\ee^{-A \cdot \sin(-2\pi N \Delta)}} \leq \ee^{-\ee^{A \cdot \pi N \Delta}}.
\end{equation*}
By applying condition \ref{eq:app2} this yields
\begin{equation} \label{eq:est2}
\ee^{-\ee^{-A \cdot \sin \left( 2 \pi \left( Nx- \frac{s}{l}\right)\right)}}< \frac{\varepsilon}{2l}
\end{equation}
in case of $\frac{l}{2} \leq j-s < l$ or $-\frac{l}{2} \leq j-s <0$. By the above estimates in equation \ref{eq:est1} and \ref{eq:est2} we get
\begin{equation*}
s_{\beta, N, \varepsilon, \delta}(x) \geq \beta_j \cdot \left( \left( 1- \frac{\varepsilon}{8} \right) \cdot \left( 1- \frac{\varepsilon}{8} \right) - \frac{\varepsilon}{2l} \right)  - (l -1 ) \cdot \frac{\varepsilon}{2l} 
\end{equation*}
and 
\begin{equation*}
s_{\beta, N, \varepsilon, \delta}(x) \leq \beta_j + (l -1 ) \cdot \frac{\varepsilon}{2l}.  
\end{equation*}
Altogether, we conclude
\begin{equation*}
\left| s_{\beta, N, \varepsilon, \delta}(x) - \beta_j \right| < \varepsilon.
\end{equation*}
\end{proof}

Finally we piece together everything and demonstrate how a block-slide type of map on the torus can be approximated by a measure preserving real-analytic diffeomorphism.

\begin{proposition} \label{proposition approximation}
Let $\mathfrak{h}:\T^d\to\T^d$ be a block-slide type of map which commutes with $\phi^{1/q}$ for some natural number $q$. Then for any $\e>0$ and $\d>0$, there exists a real-analytic diffeomorphism $h\in\text{Diff }^\omega_\infty(\T^d,\mu)$ satisfying the following conditions:
\begin{enumerate}
\item Proximity property: There exists a set $E\subset\T^d$ such that $\mu(E)<\d$ and $\sup_{x\in\T^d\setminus E}\|h(x)-\mathfrak{h}(x)\|<\e$. 
\item Commuting property: $h\circ\phi^{1/q}=\phi^{1/q}\circ h$
\end{enumerate} 
In this case we say the the diffeomorphism $h$ is $(\e,\d)$-close to the block-slide type map $\mathfrak{h}$. 
\end{proposition}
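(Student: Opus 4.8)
The plan is to reduce the statement to the approximation of a single \emph{elementary} block-slide factor, handle that factor with Lemma~\ref{lemma approx}, and then compose the analytic pieces while tracking how the errors accumulate. I would start from the fact that $\mathfrak{h}$ is, by definition, a finite composition $\mathfrak{h}=\mathfrak{h}^{(m)}\circ\cdots\circ\mathfrak{h}^{(1)}$ of elementary maps $\mathfrak{h}^{(r)}(x)=(x_1,\dots,x_{i_r}+s_r(x_{j_r})\bmod 1,\dots,x_d)$ with $i_r\ne j_r$ and $s_r$ a step function. Since $\phi^t$ moves only the first coordinate, a factor with $j_r\ne 1$ automatically commutes with $\phi^{1/q}$ (if $i_r=1$ because a shear of $x_1$ by a function of the remaining coordinates commutes with $x_1\mapsto x_1+\tfrac1q$; if $i_r\ne1$ because the factor ignores $x_1$), whereas a factor with $j_r=1$ commutes with $\phi^{1/q}$ exactly when $s_r$ is $\tfrac1q$-periodic. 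Using that $\mathfrak h$ itself commutes with $\phi^{1/q}$, I would arrange the decomposition so that every factor with $j_r=1$ has a $\tfrac1q$-periodic $s_r$; and, after replacing each $s_r$ off an arbitrarily small set by a step function with breakpoints in $\tfrac1M\Z$, I may also assume all breakpoints are rational. It then suffices to produce, for each $r$, a map $h^{(r)}\in\text{Diff }^\omega_\infty(\T^d,\mu)$ that commutes with $\phi^{1/q}$ and approximates $\mathfrak h^{(r)}$ well off a small set.

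\textbf{One factor.} For fixed $r$ I would refine breakpoints so that $s_r=\tilde s_{\beta,N_r}$ has the form in Lemma~\ref{lemma approx}, choosing $N_r=1$ if $j_r\ne1$ and $N_r=q$ if $j_r=1$ (legitimate because then $s_r$ is $\tfrac1q$-periodic). Applying Lemma~\ref{lemma approx} with auxiliary parameters $\e',\d'$ (fixed later) yields a $\tfrac1{N_r}$-periodic real entire $\bar s_r\in C^\omega_\infty(\T^1)$ with $|\bar s_r-\tilde s_{\beta,N_r}|<\e'$ and $|\bar s_r'|<\e'$ off a set $F_r\subset[0,1)$, $\lambda(F_r)<\d'$, a union of $k_rN_r$ short intervals around the grid points. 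Set $h^{(r)}(x):=(x_1,\dots,x_{i_r}+\bar s_r(x_{j_r})\bmod1,\dots,x_d)$. Then $h^{(r)}$ preserves $\mu$, its inverse is the shear by $-\bar s_r$, and since $\bar s_r\in C^\omega_\infty(\T^1)$ the lifts of $h^{(r)}$ and of its inverse extend holomorphically to every $\Omega_\rho$, so $h^{(r)}\in\text{Diff }^\omega_\infty(\T^d,\mu)$. It commutes with $\phi^{1/q}$ by the first paragraph when $j_r\ne1$, and when $j_r=1$ because $\bar s_r$ is $\tfrac1{N_r}$-periodic with $q\mid N_r$, hence $\tfrac1q$-periodic.

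\textbf{Composition and error bound.} Put $h:=h^{(m)}\circ\cdots\circ h^{(1)}$; since $\text{Diff }^\omega_\infty(\T^d,\mu)$ is closed under composition and each $h^{(r)}$ commutes with $\phi^{1/q}$, we get $h\in\text{Diff }^\omega_\infty(\T^d,\mu)$ and $h\circ\phi^{1/q}=\phi^{1/q}\circ h$, which is the commuting property. For proximity I would pick $\eta>0$ small, let $\widehat F_r$ be the $\eta$-neighbourhood of $F_r$, $\widehat E_r:=\{x:x_{j_r}\in\widehat F_r\}$, and $E:=\bigcup_{r=1}^m(h^{(r-1)}\circ\cdots\circ h^{(1)})^{-1}(\widehat E_r)$; since all $h^{(r)}$ preserve $\mu$, $\mu(E)\le\sum_r(\d'+2\eta\, k_rN_r)$, which is $<\d$ once $\d',\eta$ are small. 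For $x\notin E$, setting $x^{(0)}=\tilde x^{(0)}=x$, $x^{(r)}=h^{(r)}(x^{(r-1)})$, $\tilde x^{(r)}=\mathfrak h^{(r)}(\tilde x^{(r-1)})$, the point $x^{(r-1)}$ lies (in its $j_r$-th coordinate) well inside a slab on which $\tilde s_{\beta,N_r}$, hence $\mathfrak h^{(r)}$, is a pure translation; if $\eta$ and $m\e'$ are small compared with the slab widths then $\tilde x^{(r-1)}$ lies in the same slab, so $\mathfrak h^{(r)}$ is there the same translation (an isometry), giving $\|x^{(r)}-\tilde x^{(r)}\|\le\e'+\|x^{(r-1)}-\tilde x^{(r-1)}\|$ and by induction $\|h(x)-\mathfrak h(x)\|<m\e'$. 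Choosing parameters in the order $\d$, then $\d',\eta$ with $\mu(E)<\d$, then $\e'<\min\{\e/m,\,\eta/m,\,\text{a fixed fraction of the smallest slab width}\}$ gives $\sup_{x\in\T^d\setminus E}\|h(x)-\mathfrak h(x)\|<\e$.

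\textbf{Main obstacle.} The genuinely delicate point is the last step: the per-factor parameters $\e',\d',\eta$ must be selected in the right order so that the enlarged bad sets still have total measure below $\d$ while the two orbits $(x^{(r)})$ and $(\tilde x^{(r)})$ never separate across a discontinuity of any $\mathfrak h^{(r)}$ — the one place where the ``jump'' (non-Lipschitz) behaviour of the step functions could ruin the estimate. A secondary, purely bookkeeping matter is the reduction in the first paragraph to a decomposition of $\mathfrak h$ whose elementary factors individually commute with $\phi^{1/q}$.
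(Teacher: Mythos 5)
Your proposal is correct and follows essentially the same route as the paper: approximate each elementary shear via Lemma \ref{lemma approx} (with $N=1$ for factors whose step function depends on a coordinate $x_j$, $j\neq 1$, and $N=q$, i.e. a $\tfrac{1}{q}$-periodic profile, for factors depending on $x_1$), and then compose the resulting analytic shears, which stay in $\text{Diff }^\omega_\infty(\T^d,\mu)$ and commute with $\phi^{1/q}$. The only difference is that where the paper settles the composition step with the one-line remark that one takes ``a composition of the individual approximations and a union of all the component error sets,'' you spell out the bookkeeping that makes this legitimate --- pulling the bad sets back under the partial compositions (using measure preservation), enlarging them by $\eta$, and exploiting that each elementary block-slide factor is a rigid translation on each constancy slab to get the inductive estimate $\|x^{(r)}-\tilde x^{(r)}\|\le \e'+\|x^{(r-1)}-\tilde x^{(r-1)}\|$.
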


\begin{proof}
First we assume that for some step function $\tilde{s}_{\b,1}$ and any integer $i$ with $1<i\leq d$, the block-slide map $\mathfrak{h}$ is of the following type:
\begin{align}
\mathfrak{h}:\T^d\to\T^d\qquad\qquad\text{defined by}\qquad\mathfrak{h}(x)=(x_1+\tilde{s}_{\b,1}(x_i),x_2,\ldots,x_d)
\end{align}
Then we define the following function using $s_{\b,1}$ as in lemma \ref{lemma approx}:
\begin{align}
h:\T^d\to\T^d\qquad\qquad\text{defined by}\qquad h(x)=(x_1+s_{\b,1}(x_i),x_2,\ldots,x_d)
\end{align}
With $F$ as lemma \ref{lemma approx}, we put $E=\T^{i-1}\times F\times \T^{d-i}$ and observe that $h$ satisfies all the conditions of the proposition.

Similarly, if for some step function $\tilde{s}_{\b,q}$ and any integer $i$ with $1<i\leq d$, we have a block-slide map $\mathfrak{h}$ of the following type:
\begin{align}
\mathfrak{h}:\T^d\to\T^d\qquad\qquad\text{defined by}\qquad\mathfrak{h}(x)=(x_1,\ldots,x_{i-1},x_i+\tilde{s}_{\b,q}(x_1),x_{i+1},\ldots,x_d)
\end{align}
Then we define the following function using $s_{\b,q}$ as in lemma \ref{lemma approx}:
\begin{align}
h:\T^d\to\T^d\qquad\qquad\text{defined by}\qquad h(x)=(x_1,\ldots,x_{i-1},x_i+s_{\b,q}(x_1),x_{i+1},\ldots,x_d)
\end{align}
With $F$ as lemma \ref{lemma approx}, we put  $E=F\times \T^{d-1}$ and observe that $h$ satisfies all the conditions of the proposition.

So for a general block-slide type map which is obtained by a composition of several maps of the above type, we just take a composition of the individual approximations and a union of all the component error sets.
\end{proof}

\subsection{Analytic AbC method} \label{subsection abc method}

Our objective now is to recall the approximation by conjugation scheme developed by Anosov and Katok in \cite{AK}. Though we modify this scheme slightly to be more suitable for our purpose and fit the notations of our article we insist that the method presented here is almost identical to the original construction. In more modern works, this method is often presented in a less formal way (see \cite{FK}) avoiding most technicalities but for our purpose we find the original scheme to be most suitable and we stick close to it.

The AbC method is an inductive process where a sequence of diffeomorphisms $T_n\in\text{Diff }^\omega_\infty(\T^d,\mu)$ is constructed inductively. The diffeomorphisms $T_n$ converge to some diffeomorphism $T$ $\in$ $\text{Diff }^\omega_\rho(\T^d,\mu)$. Additionally $T_n$ s are chosen carefully so that they satisfy some finite version of the desired property of $T$. 

We now give an explicit description. At the beginning of the construction we fix a constant $\rho>0$ and note that all parameters chosen will depend on this $\rho$. 

Assume that the construction has been carried out up to the $n$ th stage and we have the following information available to us:

\begin{enumerate}
\item We have sequences of natural numbers $\{p_m\}_{m=1}^n$, $\{q_m\}_{m=1}^n$, $\{k_m\}_{m=1}^{n-1}$, $\{l_m\}_{m=1}^{n-1}$, $\{s_m\}_{m=1}^{n-1}$, a sequence of functions $\{a_m:\{0,\ldots, k_m\}\to\{0,\ldots, q_m-1\}\}_{m=1}^{n-1}$ and a sequence of numbers $\{\e_m\}_{m=1}^n$ . They satisfy the following condition:
\begin{align}
p_{m}=s_{m-1}k_{m-1}l_{m-1}q_{m-1}p_{m-1} + 1\qquad\quad q_{m}=s_{m-1}k_{m-1}l_{m-1}q_{m-1}^2\qquad\quad \e_m< 2^{-q_m}
\end{align} 

\item The sequence of diffeomorphisms $\{T_m\}_{m=1}^n$ is constructed as conjugates of a periodic translation. More precisely,
\begin{align}
T_m:=H_m^{-1}\circ\phi^{\a_{m}}\circ H_m\qquad\qquad H_m:=h_m\circ H_{m-1}\qquad\qquad h_m\in\text{Diff }^{\omega}_\infty(\T^d,\mu)
\end{align}
The diffeomorphisms $\{h_{m}\}_{m=1}^n$ satisfy the following commuting condition:
\begin{align}
h_{m}\circ\phi^{\a_{m-1}}=\phi^{\a_{m-1}}\circ h_{m}
\end{align}

\item For $m =1,\ldots, n$, the diffeomorphism $T_m$ preserves and permutes two sequences of partitions, namely, $H_m^{-1}\mathcal{R}_{a_m,k_m,q_m}$ and $\mathcal{F}_{q_m}:=H_m^{-1}\mathcal{T}_{q_m}$.

\item For $m =1,\ldots, n$, $\mu(h_{m}^{-1}R_{i,q_{m-1}}\triangle\Delta_{i,q_{m-1}})<\e_{m-1}$ for any $R_{i,q_{m-1}}\in\mathcal{R}_{a_{m-1},k_{m-1},q_{m-1}}$ and $\Delta_{i,q_{m-1}}\in \mathcal{T}_{q_{m-1}}$ with the same $i$.

\item For $m =1,\ldots, n$, $\text{diam} (\mathcal{F}_{q_m}\cap E_{m})<\e_m$ \footnote{ This means that the diameter of the intersection of any atom of $\mathcal{F}_{q_m}$ and $E_m$ is less that $\e_m$.} for some measurable set $E_m$ satisfying $\mu(E_m)>1-\e_m$. (Note that this means $\mathcal{F}_{q_m}$  is a generating but not necessarily monotonic sequence of partitions.)

\item For $m =1,\ldots, n$: $d_\rho(T_m,T_{m-1})<\e_m$ and $d_0 \left( T^i_m, T^i_{m-1} \right) < \frac{1}{2^{m-1}}$ for $0 \leq i < q_{m-1}-1$.
\end{enumerate}

Now we show how to do the construction at the $n+1$ th stage of this induction process. We proceed in the following order:
\begin{enumerate}
\item We choose $k_n $ and our function $a_n:\{0,\ldots, k_n\}\to\{0,\ldots, q_n-1\}$. This choice will depend on the construction we are doing and the specific properties we are targeting to prove.

\item We choose $l_n$ to be a large enough integer so that the following condition is satisfied:
\begin{align}\label{ln criterion}
l_n>2^n\|DH_n\|_0
\end{align}

\item Find a block-slide type map $\mathfrak{h}_{a_n,k_n,l_n,q_n}$ which commutes with $\phi^{\a_n}$, maps the partition $\mathcal{G}_{l_nk_n,q_n}$ to $\mathcal{T}_{l_n^dk_n^d,q_n}$ and it maps the partition $\mathcal{T}_{q_n}$ to the partition $\mathcal{R}_{a_n,k_n,q_n}$. 

\item Use proposition \ref{proposition approximation} to construct $h_{n+1}$ which is $(\e_n/2^{l_nk_nq_n},\e_n/2^{l_nk_nq_n})$ close to $\mathfrak{h}_{a_n,k_n,l_n,q_n}$. Put $E_n$ to be the error set in proposition \ref{proposition approximation}.

\item Ensure $|\a_{n+1}-\a_n|$ is small enough to guarantee $d_\rho(T_{n+1},T_{n})<\e_m$ and $d_0 \left( T^i_{n+1}, T^i_{n} \right) < \frac{1}{2^{n}}$ for $0 \leq i < q_{n}-1$. If either $l_n$ or $k_n$ above is chosen to be very large and this condition is satisfied, we put $s_n=1$. If our choice of $l_n$ or $k_n$ is too restrictive then we choose $s_n$ to be large enough so that convergence is guaranteed.  

\end{enumerate}
This completes the construction at the $n+1$ th stage. Note that this way convergence of $T_n$ to some $T\in\text{Diff }^\omega_\rho(\T^d,\mu)$ is guaranteed.  

\begin{remark} \label{close iterates}
By $d_0 \left( T^i_{n+1}, T^i_{n} \right) < \frac{1}{2^{n}}$ for $0 \leq i < q_{n}-1$ and every $n \in \N$ we get $d_0 \left(T^i, T^i_{n+1} \right) < \frac{1}{2^n}$ for $0 \leq i < q_{n+1}-1$. 
\end{remark}

We need another important constructions which is very handy for some application. Note that the partitions $\mathcal{F}_{q_n}$ are not necessarily monotonic. However the following proposition shows that a generating monotonic partition which is cyclically permuted by $T_n$ can be constructed from $\mathcal{F}_{q_n}$. This is identical to proposition 3.1 in \cite{AK}.

\begin{proposition} \label{proposition monotonic generating cyclic partition}
With notations as in the approximation by conjugation scheme, we can find a sequence of partitions $\mathcal{M}_n$ of $\T^d$ satisfying the following three properties:
\begin{enumerate}
\item Monotonicity condition: $\mathcal{M}_{n+1}>\mathcal{M}_n$
\item Cyclic permutaion: The diffeomorphims $T_n$ cyclically permutes the atoms of $\mathcal{M}_n$.
\item Generating condition: $\mathcal{M}_n\to\e$ as $n\to\infty$. 
\end{enumerate}
\end{proposition}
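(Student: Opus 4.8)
The plan is to mimic Anosov–Katok's Proposition 3.1 construction, building $\mathcal{M}_n$ from the partitions $\mathcal{F}_{q_n} = H_n^{-1}\mathcal{T}_{q_n}$ inductively, while repairing the failure of monotonicity by intersecting against the images of finer and finer standard partitions under $H_n^{-1}$. The key observation is that $T_n = H_n^{-1}\circ\phi^{\alpha_n}\circ H_n$ cyclically permutes $\mathcal{F}_{q_n}$ (since $\phi^{p_n/q_n}$ cyclically permutes $\mathcal{T}_{q_n}$ because $p_n, q_n$ are coprime, as follows from the recursion $p_n = s_{n-1}k_{n-1}l_{n-1}q_{n-1}p_{n-1}+1$), and more generally permutes $H_n^{-1}\mathcal{T}_{Mq_n}$ for any $M$. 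So the building blocks are there; the work is to glue the levels together consistently.

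First I would set $\mathcal{M}_1 = \mathcal{F}_{q_1}$ (or $H_1^{-1}\mathcal{T}_{q_1}$), which is cyclically permuted by $T_1$. Inductively, given $\mathcal{M}_n$, I want $\mathcal{M}_{n+1}$ to refine $\mathcal{M}_n$, to be cyclically permuted by $T_{n+1}$, and to be fine. The natural candidate is a common refinement of $\mathcal{F}_{q_{n+1}}$ with $H_{n+1}^{-1}\mathcal{T}_{M_{n+1} q_{n+1}}$ for a suitably large $M_{n+1}$ — both are cyclically permuted by $T_{n+1}$, hence so is their join (one must check the join of two partitions each cyclically permuted by the \emph{same} cyclic map of matching period is again cyclically permuted; this is where choosing the refining partition to have period a multiple of $q_{n+1}$ matters, and one passes to the join's natural cyclic structure induced by $T_{n+1}$ acting on the interval coordinate). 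To force $\mathcal{M}_{n+1} > \mathcal{M}_n$, I would further intersect with $\mathcal{M}_n$ itself: the join $\mathcal{M}_n \vee \mathcal{F}_{q_{n+1}} \vee H_{n+1}^{-1}\mathcal{T}_{M_{n+1}q_{n+1}}$ is automatically a refinement of $\mathcal{M}_n$, and I must check it is still cyclically permuted by $T_{n+1}$ — this uses that $T_{n+1}$ is $C^0$-close to $T_n$ (condition (6) of the induction hypothesis, via Remark \ref{close iterates}) so that $T_{n+1}$ nearly permutes $\mathcal{M}_n$, but for an \emph{exact} cyclic permutation one instead argues that on a large-measure set the atoms line up, or more cleanly, one replaces $\mathcal{M}_n$ by its image under a small correction — this is the delicate point. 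For the generating property (3), I would use condition (5): $\mathrm{diam}(\mathcal{F}_{q_n}\cap E_n) < \e_n$ with $\mu(E_n) > 1-\e_n$ and $\e_n < 2^{-q_n}$, so $\mathcal{F}_{q_n} \to \e$ already in the sense of the definition, and since $\mathcal{M}_n \geq \mathcal{F}_{q_n}$ the same holds for $\mathcal{M}_n$.

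I expect the main obstacle to be reconciling \textbf{exact} cyclic permutation across levels with \textbf{exact} monotonicity. In Anosov–Katok this is handled because the relevant partitions genuinely nest; here $\mathcal{F}_{q_n}$ need not refine $\mathcal{F}_{q_{n-1}}$, so one cannot simply take joins and hope the cyclic structure survives. The resolution I would pursue: do the construction on the \emph{model} side first. On $\T^d$ with the standard action, the partitions $\mathcal{T}_{q_n}$ do \emph{not} nest either (since $q_n$ is not a multiple of $q_{n-1}$ in general — wait, actually $q_{n+1} = s_n k_n l_n q_n^2$ \emph{is} a multiple of $q_n$), so in fact $\mathcal{T}_{q_{n+1}}$ \emph{does} refine $\mathcal{T}_{q_n}$. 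Therefore on the model side, $\mathcal{T}_{q_n}$ is already a monotonic sequence of partitions cyclically permuted by $\phi^{\alpha_n}$, and it generates because $q_n \to \infty$. The honest difficulty is only that $\mathcal{M}_n := H_n^{-1}\mathcal{T}_{q_n}$ is permuted by $T_n$ but \emph{not} by $T_{n+1}$, and $H_{n+1}^{-1}\mathcal{T}_{q_{n+1}}$ is not a refinement of $H_n^{-1}\mathcal{T}_{q_n}$ because $H_{n+1} = h_{n+1}\circ H_n$ with $h_{n+1}$ not preserving $\mathcal{T}_{q_n}$ exactly — only $\phi^{\alpha_n}$-commuting, hence preserving $\mathcal{T}_{q_n}$ setwise as a partition is false; $h_{n+1}$ does map $\mathcal{T}_{q_n}$ to $\mathcal{R}_{a_n,k_n,q_n}$, a \emph{different} partition with the same cyclic structure under $\phi^{\alpha_n}$. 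So I would define $\mathcal{M}_{n+1} = H_{n+1}^{-1}(\mathcal{T}_{q_{n+1}} \vee h_{n+1}(H_n^{-1}... ))$ — more precisely, pull back to $\T^d$: set $\mathcal{N}_n$ on the base by $\mathcal{N}_1 = \mathcal{T}_{q_1}$ and $\mathcal{N}_{n+1} = \mathcal{T}_{q_{n+1}} \vee (h_{n+1})(\mathcal{N}_n)$ is wrong too since $h_{n+1}$-image of a cyclic-for-$\phi^{\alpha_n}$ partition is cyclic-for-$\phi^{\alpha_n}$ but we need cyclic-for-$\phi^{\alpha_{n+1}}$. The clean fix, and the one I would write up: choose $M_n$ with $q_n \mid M_n q_n$ trivially and take $\mathcal{M}_n = H_n^{-1}\mathcal{T}_{q_n}$, then observe $\mathcal{M}_{n+1} = H_{n+1}^{-1}\mathcal{T}_{q_{n+1}} = H_n^{-1}h_{n+1}^{-1}\mathcal{T}_{q_{n+1}}$ and since $q_{n+1}$ is a multiple of $q_n$ and $h_{n+1}^{-1}\mathcal{T}_{q_{n+1}}$ refines $h_{n+1}^{-1}\mathcal{T}_{q_n} = \mathcal{R}_{a_n,k_n,q_n}^{-1}$... which still need not refine $\mathcal{T}_{q_n}$. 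So monotonicity genuinely requires enlarging $\mathcal{M}_{n+1}$ to $H_{n+1}^{-1}\mathcal{T}_{q_{n+1}} \vee \mathcal{M}_n$, and then the real lemma to prove is: \emph{this join is cyclically permuted by $T_{n+1}$}, which I would establish by choosing the free parameter $s_n$ (equivalently $q_{n+1}$) large enough that $T_{n+1}$-orbits of atoms of $H_{n+1}^{-1}\mathcal{T}_{q_{n+1}}$ sweep through the $\mathcal{M}_n$-structure compatibly — concretely, by arranging that $T_{n+1}$ and $T_n$ agree on the relevant combinatorics for the first $q_n$ iterates (Remark \ref{close iterates}) so that $\mathcal{M}_n$ is, up to the $T_{n+1}$-action, refined in a cyclically-invariant way. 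Granting that lemma, properties (1)–(3) follow immediately, and I would devote the bulk of the written proof to that lemma and to fixing $s_n$ accordingly — this can be folded into step (5) of the inductive scheme without disturbing the other requirements.
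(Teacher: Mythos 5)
Your reduction of the problem is accurate (the model partitions $\mathcal{T}_{q_n}$ do nest, and the only obstruction is that $h_{n+1}$ sends $\mathcal{T}_{q_n}$ to the different partition $\mathcal{R}_{a_n,k_n,q_n}$, so $H_{n+1}^{-1}\mathcal{T}_{q_{n+1}}$ does not refine $H_n^{-1}\mathcal{T}_{q_n}$), but the proposal founders exactly at the point you flag as delicate, and the fix you sketch cannot work. Your candidate $\mathcal{M}_{n+1}=H_{n+1}^{-1}\mathcal{T}_{q_{n+1}}\vee\mathcal{M}_n$ needs the unproved lemma that this join is cyclically permuted by $T_{n+1}$. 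But $T_{n+1}$ does not preserve $\mathcal{M}_n$ (only $T_n$ does): since $h_{n+1}^{-1}$ carries $\mathcal{R}_{a_n,k_n,q_n}$ to $\mathcal{T}_{q_n}$ only up to sets of measure $\e_n$, the map $T_{n+1}$ moves each atom of $\mathcal{F}_{q_n}$ off itself by a set of positive measure, no matter how $s_n$ (hence $q_{n+1}$) is chosen; enlarging $s_n$ changes $\alpha_{n+1}$ but not this combinatorial mismatch. Likewise $d_0(T^i_{n+1},T^i_n)<2^{-n}$ is a $C^0$ proximity statement and can never be upgraded to the \emph{exact} set-theoretic invariance that a cyclic permutation of a partition requires. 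So the join is in general not even $T_{n+1}$-invariant, and the bulk of the proof you defer is precisely the part that fails.

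The paper's proof takes a different route which you should compare against: it never joins partitions, and the atoms of $\mathcal{M}_n$ are \emph{not} refinements of atoms of $\mathcal{F}_{q_n}$ but limits of corrected versions of them. One defines combinatorial correspondences $\mathfrak{c}_{n+1,n}$ on the model torus by matching $\Delta_{i,q_n}\mapsto R_{i,q_n}$ and writing each $R_{i,q_n}$ as a union of atoms of $\mathcal{T}_{q_{n+1}}$; composing these gives, for every $m>n$, a partition $\mathcal{T}_{q_m,q_n}$ into $q_n$ sets, each a union of atoms of $\mathcal{T}_{q_m}$, which is \emph{exactly} cyclically permuted by $\phi^{1/q_n}$ and monotone in $n$. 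Pulling back by $H_m^{-1}$ gives $\mathcal{F}_{q_m,q_n}$, still exactly cyclically permuted by $T_n$ because $\phi^{\alpha_n}$ commutes with $h_j$ for $j>n$. Finally one shows the sets $\mathfrak{p}_{m,n}(H_n^{-1}\Delta_{i,q_n})=H_m^{-1}(\mathfrak{c}_{m,n}(\Delta_{i,q_n}))$ converge in the measure algebra as $m\to\infty$, using condition (4) of the scheme, $\mu(h_{m+1}^{-1}R_{i,q_m}\triangle\Delta_{i,q_m})<\e_m$, together with $\e_m<2^{-q_m}$ so that $\sum_m q_m\e_m<\infty$; the limit sets form $\mathcal{M}_n$, and exact monotonicity and exact $T_n$-cyclicity survive the limit because they hold exactly at every finite stage, while the generating property follows from the measure-closeness of $\mathcal{M}_n$ to the generating sequence $\mathcal{F}_{q_n}$. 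The essential idea missing from your proposal is this atom-by-atom limiting procedure driven by the quantitative closeness hypothesis (4), which trades the impossible demand of exact invariance for the join against a convergent sequence of partitions that are exactly invariant by construction.
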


\begin{proof}
For any $n$ we define a measurable map  $\mathfrak{c}_{n+1,n}^{(\mathfrak{1})}:\T^d/\mathcal{T}_{q_n}\to \T^d/\mathcal{R}_{a_n,k_n,q_n}$ by  $\mathfrak{c}_{n+1,n}^{(\mathfrak{1})}(\Delta_{i,q_n})=R_{i,q_n}$ and another measurable map $\mathfrak{c}_{n+1,n}^{(\mathfrak{2})}:\T^d/\mathcal{T}_{q_{n+1}}\to\T^d/\mathcal{R}_{a_n,k_n,q_n}$ by $\mathfrak{c}_{n+1,n}^{(\mathfrak{2})}(\Delta_{i,q_{n+1}})=R_{j,q_n}$   where $j$ is such that $\Delta_{i,q_{n+1}}\subset R_{j,q_n}$. So at the level of $\T^d$ the composition $\mathfrak{c}_{n+1,n}=(\mathfrak{c}_{n+1,n}^{(\mathfrak{2})})^{-1}\circ\mathfrak{c}_{n+1,n}^{(\mathfrak{1})}$ gives us a correspondence which associates each atom of $\mathcal{T}_{q_n}$ with a union of atoms of $\mathcal{T}_{q_{n+1}}$. So we can define a new partition $\mathcal{T}_{q_{n+1},q_{n}}:=\{\mathfrak{c}_{n+1,n}(\Delta_{i,q_n}): 0\leq i<q_n\}$. Continuing this procedure, we obtain for any $m>n$, a partition  $\mathcal{T}_{q_m,q_n}:=\{\mathfrak{c}_{m,m-1}\circ\ldots\circ\mathfrak{c}_{n+2,n+1}\circ\mathfrak{c}_{n+1,n}(\Delta_{i,q_n}): 0\leq i<q_n\}$. We note that this partition satisfies the following three conditions for any three integers $n$, $l$ and $m$ with $m>l>n$:
\begin{enumerate}
\item Monotonicity condition: $\mathcal{T}_{q_{m},q_{n+1}}>\mathcal{T}_{q_m,q_n}$ 
\item Cyclic permutation: $\phi^{1/q_n}$ cyclically permutes the atoms of $\mathcal{T}_{q_m,q_n}$. 
\item $\mathfrak{c}_{m,l}\circ\mathfrak{c}_{l,n}=\mathfrak{c}_{m,n}$.
\end{enumerate} 
Now we define two new partitions as follows:
\begin{align}
& \mathcal{F}_{q_m,q_n}:=H_m^{-1}\mathcal{T}_{q_m,q_n}\\
& \mathcal{F}_{q_n}:=H_n^{-1}\mathcal{T}_{q_n}
\end{align}
We define the correspondence $\mathfrak{p}_{m,n}:\mathcal{F}_{q_n}\to\mathcal{F}_{q_m,q_n}$ by $\mathfrak{p}_{m,n}(H_n^{-1}(\Delta_{i,q_n}))=H_m^{-1}(\mathfrak{c}_{m,n}(\Delta_{i,q_n}))$. Now note that for any three integers $m>l>n$ we have the following three properties:
\begin{enumerate}
\item Monotonicity: $\mathcal{F}_{q_{m},q_{n+1}}>\mathcal{F}_{q_m,q_n}$.
\item Cyclic permutation: $T_n$ cyclically permutes the atoms of $\mathcal{F}_{q_m,q_n}$ (since $\phi^{1/q_n}$ commutes with $h_j$ for $j>n$).
\item $\mathfrak{p}_{m,l}\circ\mathfrak{p}_{l,n}$ $=\mathfrak{p}_{m,n}$. Indeed, for any $\Delta_{i,q_n}$ we observe,
\begin{align*}
\mathfrak{p}_{m,l}\circ\mathfrak{p}_{l,n}(H_n^{-1}(\Delta_{i,q_n}))= &\; \mathfrak{p}_{m,l}(H_l^{-1}(\mathfrak{c}_{l,n}(\Delta_{i,q_n})))\\
= &\; H_m^{-1}(\mathfrak{c}_{m,l}(\mathfrak{c}_{l,n}(\Delta_{i,q_n}))))\\
= &\; H_m^{-1}(\mathfrak{c}_{m,n}(\Delta_{i,q_n}))\\
= &\; \mathfrak{p}_{m,n}(H_n^{-1}(\Delta_{i,q_n}))
\end{align*}
\end{enumerate}
Our next goal is to prove that the limit 
\begin{align}\label{6.567}
\mathfrak{p}_{\infty,n}(H_n^{-1}(\Delta_{i,q_n}))):=\lim_{m\to\infty}(\mathfrak{p}_{m,n}(H_n^{-1}(\Delta_{i,q_n})))
\end{align}
exists for any $n$ and $i$. In order to see this we note at first that $\mathfrak{c}_{m,n}(\Delta_{i,q_n})=\cup_{l\in\sigma}\Delta_{l,q_m}$ where $\sigma$ is some indexing set of size $q_m/q_n$. This implies $\mathfrak{c}_{m+1,n}(\Delta_{i,q_n})=\cup_{l\in\sigma}R_{l,q_m}$. And hence $h_{m+1}^{-1}(\mathfrak{c}_{m+1,n}(\Delta_{i,q_n}))=\cup_{l\in\sigma}h_{m+1}^{-1}(R_{l,q_n})$. Now note the following estimates:
\begin{align*}
& \; \mu(h_{m+1}^{-1}(R_{l,q_m})\triangle \Delta_{l,q_m})<\e_m\\
\Rightarrow & \; \mu(\bigcup_{l\in\sigma}h_{m+1}^{-1}(R_{l,q_m})\triangle \Delta_{l,q_m})<\frac{q_m}{q_n}\e_m\\
\Rightarrow & \; \mu(\bigcup_{l\in\sigma}h_{m+1}^{-1}(R_{l,q_m})\triangle \bigcup_{l\in\sigma}\Delta_{l,q_m})<\frac{q_m}{q_n}\e_m\\
\Rightarrow & \; \mu(h_{m+1}^{-1}(\mathfrak{c}_{m+1,n}(\Delta_{i,q_n}))\triangle \mathfrak{c}_{m,n}(\Delta_{i,q_n})<\frac{q_m}{q_n}\e_m\\
\Rightarrow & \; \mu(H_{m+1}^{-1}(\mathfrak{c}_{m+1,n}(\Delta_{i,q_n}))\triangle H_m^{-1}(\mathfrak{c}_{m,n}(\Delta_{i,q_n}))<\frac{q_m}{q_n}\e_m\\
\Rightarrow & \; \mu(\mathfrak{p}_{m+1,n}(H_n^{-1}(\Delta_{i,q_n}))\triangle \mathfrak{p}_{m,n}(H_n^{-1}(\Delta_{i,q_n}))<\frac{q_m}{q_n}\e_m\\
\Rightarrow & \; \sum_{i=0}^{q_n-1}\mu(\mathfrak{p}_{m+1,n}(H_n^{-1}(\Delta_{i,q_n}))\triangle \mathfrak{p}_{m,n}(H_n^{-1}(\Delta_{i,q_n}))<q_m\e_m\\
\Rightarrow & \; \sum_{i=0}^{q_n-1}\mu(\mathfrak{p}_{m_1,n}(H_n^{-1}(\Delta_{i,q_n}))\triangle \mathfrak{p}_{m_2,n}(H_n^{-1}(\Delta_{i,q_n}))<\sum_{m=m_1}^{m_2}q_m\e_m
\end{align*}
This shows that the sequence $\mathcal{F}_{q_m,q_n}$ converges as $m$ goes to infinity and shows the existence of \ref{6.567}. So we define the partition 
\begin{align}
\mathcal{M}_n:=\{\mathfrak{p}_{\infty,n}(H_n^{-1}(\Delta_{i,q_n}))):0\leq i<q_n\}
\end{align}
And we note that this partition has the required properties.
\end{proof}

\section{Non standard analytic realization of some ergodic rotations of the circle}

Non-standard realization problems demonstrate how a dynamical system can live on a non native manifold. In particular we are interested in exploring when an ergodic rotation of the circle can be measure theoretically isomorphic to a measure preserving real-analytic ergodic diffeomorphisms on a torus.

\subsection{Some measure theory}

Our goal here is to prove an abstract lemma from measure theory which is a slight generalization of Lemma 4.1 in \cite{AK}. Essentially we formulate a finite version of the conjugacy we will eventually prove.

Lemma 4.1 from \cite{AK} gave us an easily checkable finite version of the conjugacy that one can use to prove the existence of a metric isomorphism of the limiting diffeomorphisms. Since the generating partitions used in the $C^\infty$ non-standard realization problem can easily made to be monotonic, this Lemma was sufficient. But in the real-analytic case, our construction is not flexible enough to guarantee monotonicity, so we need a modified version. Let us recall Lemma 4.1 from \cite{AK} since we will need it for our version.

\begin{lemma} \label{4.1}
Let $\{M^{(i)},\mu^{(i)}\},\;i=1,2$ be two Lebesgue spaces. Let $\mathcal{P}_n^{(i)}$ be a \emph{monotonic} sequences of generating finite partitions of $M^{(i)}$. Let $T_n^{(i)}$ be a sequence of automorphisms of $M^{(i)}$ satisfying $T_n^{(i)}\mathcal{P}_n^{(i)}=\mathcal{P}_n^{(i)}$ and suppose $ \lim_{n\to\infty}T_n^{(i)}=T^{(i)}$ weakly.
Suppose that there are  metric isomorphisms $K_n:M^{(1)}/\mathcal{P}_n^{(1)}\to M^{(2)}/\mathcal{P}_n^{(2)}$ satisfying:
\begin{align}
& K_n^{-1}T_n^{(2)}|_{\mathcal{P}_n^{(2)}}K_n=T_n^{(1)}|_{\mathcal{P}_n^{(1)}}\\
& K_{n+1}(\mathcal{P}_{n}^{(1)})=K_n(\mathcal{P}_{n}^{(1)})
\end{align}
Then the automorphisms $T^{(1)}$ and $T^{(2)}$ are metrically isomorphic.
\end{lemma}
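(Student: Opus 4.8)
The plan is to translate the statement into the language of measure (Boolean) algebras, glue the partial isomorphisms $K_n$ into one global isomorphism, and only at the very end descend back to a point map. First I would pass to the finite sub-$\sigma$-algebras $\mathcal{A}^{(i)}_n$ generated by $\mathcal{P}^{(i)}_n$. Monotonicity of $\{\mathcal{P}^{(i)}_n\}_n$ means $\mathcal{A}^{(i)}_n\subseteq\mathcal{A}^{(i)}_{n+1}$, while the generating property means $\bigcup_n\mathcal{A}^{(i)}_n$ is dense in the full measure algebra of $M^{(i)}$ for the metric $\varrho(A,B)=\mu^{(i)}(A\triangle B)$. Since the measure algebra of the finite Lebesgue space $M^{(i)}/\mathcal{P}^{(i)}_n$ is exactly $\mathcal{A}^{(i)}_n$, each metric isomorphism $K_n$ induces a measure-preserving Boolean isomorphism $\widehat{K}_n\colon\mathcal{A}^{(1)}_n\to\mathcal{A}^{(2)}_n$ (push-forward of sets). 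Under this dictionary the hypotheses read: $(T^{(i)}_n)_{\ast}$ preserves $\mathcal{A}^{(i)}_n$ (because $T^{(i)}_n$ permutes the atoms of $\mathcal{P}^{(i)}_n$); $\widehat{K}_n^{-1}\circ(T^{(2)}_n)_{\ast}\circ\widehat{K}_n=(T^{(1)}_n)_{\ast}$ on $\mathcal{A}^{(1)}_n$; and $\widehat{K}_{n+1}$ restricted to $\mathcal{A}^{(1)}_n$ coincides with $\widehat{K}_n$, which is precisely the content of the compatibility relation $K_{n+1}(\mathcal{P}^{(1)}_n)=K_n(\mathcal{P}^{(1)}_n)$.

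Next I would glue: by this last relation the $\widehat{K}_n$ fit together into a single measure-preserving Boolean isomorphism from $\bigcup_n\mathcal{A}^{(1)}_n$ onto $\bigcup_n\mathcal{A}^{(2)}_n$. Being measure preserving it is a $\varrho$-isometry, hence extends uniquely by density to an isomorphism $\widehat{K}$ of the full measure algebras of $M^{(1)}$ and $M^{(2)}$; since both are Lebesgue spaces, a measure-algebra isomorphism is realized by a point isomorphism $K\colon M^{(1)}\to M^{(2)}$ defined modulo null sets, and this $K$ is the candidate metric isomorphism.

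It then remains to check that $\widehat{K}$ intertwines $(T^{(1)})_{\ast}$ and $(T^{(2)})_{\ast}$; by density it suffices to verify this on every $A\in\mathcal{A}^{(1)}_n$. Fix such an $A$, set $C=\widehat{K}(A)\in\mathcal{A}^{(2)}_n$, and take any $m\ge n$; monotonicity gives $A\in\mathcal{A}^{(1)}_m$, $C\in\mathcal{A}^{(2)}_m$, and $\widehat{K}_m$ agrees with $\widehat{K}$ on $\mathcal{A}^{(1)}_m$, so the level-$m$ conjugacy yields $(T^{(1)}_m)_{\ast}A=\widehat{K}^{-1}\bigl((T^{(2)}_m)_{\ast}C\bigr)$. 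Letting $m\to\infty$ and using that $T^{(i)}_m\to T^{(i)}$ weakly, i.e. $\mu^{(i)}(T^{(i)}_mB\triangle T^{(i)}B)\to0$ for every $B$, the left-hand side converges in $\varrho$ to $(T^{(1)})_{\ast}A$, while on the right $(T^{(2)}_m)_{\ast}C\to(T^{(2)})_{\ast}C$ and $\widehat{K}^{-1}$ is $\varrho$-continuous, so the right-hand side converges to $\widehat{K}^{-1}\bigl((T^{(2)})_{\ast}C\bigr)$. Hence $(T^{(1)})_{\ast}A=\widehat{K}^{-1}\circ(T^{(2)})_{\ast}\circ\widehat{K}(A)$ for all $A$ in the dense subalgebra $\bigcup_n\mathcal{A}^{(1)}_n$, and as both sides are $\varrho$-continuous they agree everywhere; so $K$ conjugates $T^{(1)}$ to $T^{(2)}$ and the two systems are metrically isomorphic.

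The step I expect to be the real obstacle is the gluing-plus-realization: a naive pointwise inverse limit $x\mapsto\bigcap_m K_m\bigl(\mathcal{P}^{(1)}_m(x)\bigr)$ can fail to be single-valued, or even nonempty, on a genuine null set, which is exactly why one must argue at the level of measure algebras and only invoke the Lebesgue-space structure at the very end to obtain an honest point map. A second delicate point is that $T^{(1)}$ and $T^{(2)}$ need not preserve the finite algebras $\mathcal{A}^{(i)}_n$ (only the approximants $T^{(i)}_n$ do), so the conjugacy can be recovered only in the limit, and that is precisely where the weak-convergence hypothesis is used, together with monotonicity to keep the test set $A$ inside every algebra $\mathcal{A}^{(1)}_m$ with $m\ge n$.
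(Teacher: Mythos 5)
Your proof is correct, but it follows a genuinely different route from the one the paper relies on. The paper does not reprove Lemma \ref{4.1}; it invokes the original argument of \cite{AK}, whose core is the explicit pointwise construction $K(x)=\cap_{n=1}^\infty K_n(\mathcal{P}_n^{(1)}(x))$ for a.e. $x$ (formula \ref{K(x)}), where one must check directly that the nested images of atoms intersect in a single point for almost every $x$ (using that $\mathcal{P}_n^{(2)}$ is generating) and that the resulting map is measure preserving and intertwines the limits. You instead work entirely in the measure algebras: the compatibility condition glues the finite Boolean isomorphisms $\widehat{K}_n$ into an isometry of the dense subalgebras $\bigcup_n\mathcal{A}^{(i)}_n$, density (monotone plus generating) extends it to the full measure algebras, von Neumann/Rokhlin realization on Lebesgue spaces produces the point map, and the intertwining is recovered in the limit exactly as you describe, with weak convergence supplying $\mu^{(i)}(T^{(i)}_mB\triangle T^{(i)}B)\to 0$ and the isometry property of $\widehat{K}^{-1}$ passing the identity to the limit; all of these steps are sound, and your closing remarks correctly identify why the naive pointwise intersection is the delicate point that the algebraic formulation sidesteps. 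What the abstract route buys is precisely that: no null-set bookkeeping for the intersection and no separate verification that $K$ is invertible and measure preserving, since these come for free from the algebra isomorphism. What the paper's (i.e. \cite{AK}'s) concrete route buys is the explicit formula \ref{K(x)}, which is not an idle remark: the proof of Lemma \ref{lemma mtl} later uses exactly this description of $K_{(N)}(x)=\cap_{k}K_{N,k}(\mathcal{P}^{(1)}_{N,k}(x))$ to compare the isomorphisms $K_{(N)}$ and $K_{(N+1)}$ on $M^{(1)}_N$ (Claim 2 there), so if you adopted your version you would still need to extract the same pointwise description (or an equivalent compatibility statement) from the realization step before it could be plugged into the paper's subsequent argument.
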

We would also like to point out at this point of time that the metric isomorphism $K$ in the proof was defined to be
\begin{align}\label{K(x)}
K(x):=\cap_{n=1}^\infty K_n(P_n^{(1)}(x))\qquad\text{a.e.}\;\; x\in M^{(1)}
\end{align} 

Now we prove the following variation which will allow us to accommodate a marginal ``twist" that will appear in our construction.

\begin{lemma} \label{lemma mtl}
Let $\{M^{(i)},\mu^{(i)}\},\;i=1,2$ be two Lebesgue spaces. Let $\mathcal{P}_n^{(i)}$ be a sequence of generating finite partitions of $M^{(i)}$. Let $\{\e_n\}$ be a sequence of positive numbers satisfying $\sum_{n=1}^\infty\e_n<\infty$. In addition, assume that there exists a sequence of sets $\{E_n^{(i)}\}$ in $M^{(i)}$ satisfying:
\begin{align}
& \mu^{(1)}(E_n^{(1)})=\mu^{(2)}(E_n^{(2)})<\e_n\label{mtl 1}\\
& P_{n+1}^{(1)}(x)\setminus E_{n+1}^{(1)}\subset P_{n}^{(1)}(x)\quad\forall\; x\in M^{(1)}\setminus E_{n+1}^{(1)}\label{mtl 2}\\
& P_{n+1}^{(2)}(y)\subset P_{n}^{(2)}(y)\quad \forall\; y\in M^{(2)}\label{mtl 3}
\end{align}
 Let $T_n^{(1)}$ and $T_n^{(2)}$ be two sequences of automorphisms of the spaces $M^{(1)}$ and $M^{(2)}$ satisfying:
\begin{align}
& T_n^{(i)}\mathcal{P}_n^{(i)}=\mathcal{P}_n^{(i)}\quad\quad i=1,2\label{mtl 4}\\
& \lim_{n\to\infty}T_n^{(i)}=T^{(i)}\quad\quad i=1,2\label{mtl 5}\\
& T_n^{(i)}(\cup_{m=n}^\infty E_m^{(i)})=\cup_{m=n}^\infty E_m^{(i)}\quad\quad i=1,2\label{mtl 6}
\end{align}
Note that the limit in \ref{mtl 5} is taken in the weak topology. Suppose additionally that there exists a sequence of metric isomorphisms $K_n:M^{(1)}/\mathcal{P}_n^{(1)}\to M^{(2)}/\mathcal{P}_n^{(2)}$ satisfying:
\begin{align}
& K_n^{-1}T_n^{(2)}|_{\mathcal{P}_n^{(2)}}K_n=T_n^{(1)}|_{\mathcal{P}_n^{(1)}}\label{mtl 7}\\
& K_{n+1}(\mathcal{P}_{n+1}^{(1)}(x))\subset K_{n}(\mathcal{P}_{n}^{(1)}(x))\quad\forall\; x\in M^{(1)}\setminus E_{n+1}^{(1)}\label{mtl 8}
\end{align}
Then the automorphisms $T^{(1)}$ and $T^{(2)}$ are metrically isomorphic.
\end{lemma}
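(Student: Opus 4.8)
The plan is to mimic the proof of Lemma \ref{4.1} from \cite{AK}, constructing the isomorphism $K$ as a pointwise limit of the maps $K_n$ along the (almost-everywhere defined) refining sequence of partition atoms, but now controlling the non-monotonicity of the $\mathcal{P}_n^{(1)}$ by the error sets $E_n^{(1)}$, whose measures are summable. Concretely, let $E^{(1)} := \bigcap_{n=1}^\infty \bigcup_{m=n}^\infty E_m^{(1)}$; by Borel--Cantelli and \ref{mtl 1} we have $\mu^{(1)}(E^{(1)})=0$, and similarly $\mu^{(2)}(E^{(2)})=0$ for the analogous set on $M^{(2)}$. For $x \notin E^{(1)}$ there is $n_0(x)$ with $x \notin \bigcup_{m \geq n_0} E_m^{(1)}$, so by \ref{mtl 2} the atoms $\mathcal{P}_n^{(1)}(x)$ are nested for $n \geq n_0(x)$ and, since the sequence is generating, shrink to $\{x\}$. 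By \ref{mtl 8} the sets $K_n(\mathcal{P}_n^{(1)}(x))$ are then nested on $M^{(2)}$ for $n \geq n_0(x)$; since each $K_n$ is measure preserving, $\mu^{(2)}(K_n(\mathcal{P}_n^{(1)}(x))) = \mu^{(1)}(\mathcal{P}_n^{(1)}(x)) \to 0$, and by \ref{mtl 3} (monotonicity on the second space) a nested sequence of partition atoms with measures tending to zero intersects in a single point. So I would define
\begin{align*}
K(x) := \bigcap_{n \geq n_0(x)} K_n(\mathcal{P}_n^{(1)}(x)) \qquad \text{for } x \in M^{(1)} \setminus E^{(1)},
\end{align*}
a well-defined point of $M^{(2)} \setminus E^{(2)}$ (one checks the limit point avoids $E^{(2)}$ using \ref{mtl 6} and the fact that it lies in no $E_m^{(2)}$ for large $m$).

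Next I would verify that $K$ is a measure isomorphism. Measurability and measure preservation follow as in \cite{AK}: $K$ is, off a null set, a pointwise limit of the measurable maps $x \mapsto K_n(\mathcal{P}_n^{(1)}(x))$, and since each $K_n$ pushes $\mu^{(1)}|_{\mathcal{P}_n^{(1)}}$ to $\mu^{(2)}|_{\mathcal{P}_n^{(2)}}$ and the partitions generate, a standard argument on the generating algebra shows $K_*\mu^{(1)} = \mu^{(2)}$. Invertibility is obtained by running the same construction with the roles of the two spaces reversed; here one uses \ref{mtl 3} to get the nesting needed to define $K^{-1}$ directly (the second space is genuinely monotonic), together with \ref{mtl 8} to identify $K^{-1} \circ K = \mathrm{id}$ a.e.\ — the key point being that for a.e.\ $x$, $K(x) \in K_n(\mathcal{P}_n^{(1)}(x))$ for all large $n$, so the atom of $\mathcal{P}_n^{(2)}$ containing $K(x)$ is exactly $K_n(\mathcal{P}_n^{(1)}(x))$, whose $K_n$-preimage is $\mathcal{P}_n^{(1)}(x) \ni x$.

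Finally I would check the conjugacy relation $K \circ T^{(1)} = T^{(2)} \circ K$ a.e. Using \ref{mtl 4}, \ref{mtl 7} and that $T_n^{(i)}$ permutes the atoms of $\mathcal{P}_n^{(i)}$, one has, for each $n$ and each atom $P$, the identity $K_n(T_n^{(1)} P) = T_n^{(2)} K_n(P)$ at the level of partitions. Fix $x$ outside $E^{(1)}$, outside the $T^{(1)}$-preimage of $E^{(1)}$, and outside the (null) set where $T_n^{(1)} \to T^{(1)}$ fails to be compatible with the partition structure. For large $n$, $\mathcal{P}_n^{(1)}(T^{(1)} x) = \mathcal{P}_n^{(1)}(T_n^{(1)} x) = T_n^{(1)} \mathcal{P}_n^{(1)}(x)$ — this is where one invokes Remark \ref{close iterates}-type closeness of $T_n^{(1)}$ to $T^{(1)}$ on long orbit segments, which in the AbC setting guarantees that $T^{(1)}x$ and $T_n^{(1)}x$ lie in a common atom of $\mathcal{P}_n^{(1)}$ for $n$ large depending on $x$; condition \ref{mtl 6} ensures the relevant points stay outside the error sets. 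Applying $K_n$ and using the partition-level conjugacy gives $K_n(\mathcal{P}_n^{(1)}(T^{(1)}x)) = T_n^{(2)} K_n(\mathcal{P}_n^{(1)}(x)) \ni T_n^{(2)} K(x)$-approximately; intersecting over $n$ and using $T_n^{(2)} \to T^{(2)}$ weakly together with the shrinking of atoms yields $K(T^{(1)} x) = T^{(2)} K(x)$.

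The main obstacle is the last step: reconciling the \emph{weak} convergence $T_n^{(i)} \to T^{(i)}$ with the \emph{pointwise} identity $\mathcal{P}_n^{(1)}(T^{(1)}x) = T_n^{(1)}\mathcal{P}_n^{(1)}(x)$ for large $n$. In the monotonic case of \cite{AK} this is handled cleanly because the atoms only shrink; here the non-monotonicity forces one to argue that the "bad" behaviour is confined to $\bigcup_{m \geq n} E_m^{(1)}$, whose measures are summable, so that after removing a null set the orbit of a.e.\ point eventually avoids all error sets and the partition identities stabilize. Making this precise — in particular choosing the order of quantifiers so that "large $n$" can be taken uniformly enough to pass to the intersection — is the technical heart of the argument, and is exactly the modification of Lemma \ref{4.1} that the weakened hypotheses \ref{mtl 2}, \ref{mtl 6}, \ref{mtl 8} are designed to support.
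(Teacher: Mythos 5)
Your construction of $K$ itself (intersecting the nested images $K_n(\mathcal{P}_n^{(1)}(x))$ for $n\geq n_0(x)$, with $n_0(x)$ supplied by Borel--Cantelli) is in the right spirit, but the proof has a genuine gap at the equivariance step, and you say so yourself: the identity $K\circ T^{(1)}=T^{(2)}\circ K$ is never derived from the stated hypotheses. The lemma is purely abstract --- the only convergence assumed is the weak convergence \ref{mtl 5} of $T_n^{(i)}$ to $T^{(i)}$ --- so the pointwise statement ``$\mathcal{P}_n^{(1)}(T^{(1)}x)=\mathcal{P}_n^{(1)}(T_n^{(1)}x)=T_n^{(1)}\mathcal{P}_n^{(1)}(x)$ for all large $n$'' does not follow; your appeal to Remark \ref{close iterates}-type closeness of iterates imports AbC-specific structure that is not among the hypotheses \ref{mtl 1}--\ref{mtl 8}, and cannot be used in a lemma meant to stand on its own. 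A smaller inaccuracy: \ref{mtl 2} does not make the atoms $\mathcal{P}_n^{(1)}(x)$ nested for $n\geq n_0(x)$; it only gives $\mathcal{P}_{n+1}^{(1)}(x)\setminus E_{n+1}^{(1)}\subset\mathcal{P}_n^{(1)}(x)$, i.e.\ nesting of the atoms with the error sets removed (the nesting of the images you actually use is fine, since \ref{mtl 8} is stated for the images).

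The missing idea is the reduction the paper uses: set $F_N^{(i)}=\bigcup_{n\geq N}E_n^{(i)}$ and work on the restricted Lebesgue spaces $M_N^{(i)}=M^{(i)}\setminus F_N^{(i)}$. There the induced partitions $\mathcal{P}^{(i)}_{N,k}(x)=\mathcal{P}^{(i)}_{N+k}(x)\setminus F_N^{(i)}$ are generating and, by \ref{mtl 2}, genuinely monotonic, and the induced isomorphisms $K_{N,k}$ satisfy (via \ref{mtl 4}, \ref{mtl 6}, \ref{mtl 7}, \ref{mtl 8}) exactly the hypotheses of Lemma \ref{4.1}. That lemma's \emph{conclusion} already contains the conjugacy of the limit automorphisms on $M_N^{(1)}$ and $M_N^{(2)}$, so the delicate passage from weak convergence of $T_n^{(i)}$ to the conjugacy of $T^{(i)}$ is absorbed into the cited lemma rather than re-proved; one then checks the consistency $K_{(N+1)}=K_{(N)}$ a.e.\ on $M_N^{(1)}$ and glues over $N$ using $\sum_n\e_n<\infty$. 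Your proposal instead re-derives the content of Lemma \ref{4.1} by hand through the formula \ref{K(x)} and founders precisely on the step that lemma was designed to encapsulate; either invoke Lemma \ref{4.1} on the restricted spaces as above, or supply a genuine weak-convergence argument for the equivariance --- as written, the proof is incomplete.
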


\begin{proof}
Put $F^{(i)}_N:=\cup_{n=N}^\infty E_{n}^{(i)}$. Consider the sequence of Lebesgue spaces $M^{(i)}_N:=M^{(i)}\setminus F^{(i)}_N$.

\vspace{2mm}

\noindent \emph{Claim 1: $\exists$ a metric isomorphism $K_{(N)}:M^{(1)}_N\to M^{(2)}_N$, satisfying $K_{(N)}^{-1}T^{(2)}|_{M_N^{(2)}}K_{(N)}=T^{(1)}|_{M_N^{(1)}}$.}

We define $\mathcal{P}^{(i)}_{N,k}$, a finite measurable partition of $M^{(i)}_N$ by $\mathcal{P}^{(i)}_{N,k}(x):=\mathcal{P}^{(i)}_{N+k}(x)\setminus F^{(i)}_N$. We note that the sequence of partition $\{\mathcal{P}^{(i)}_{N,k}\}_{k}$ is generating because $\{\mathcal{P}^{(i)}_{k}\}_k$ is generating. Additionally, condition \ref{mtl 2} makes $\{\mathcal{P}^{(i)}_{N,k}\}_k$ a monotonic sequence of partition. We define $K_{N,k}(\mathcal{P}^{(1)}_{N,k}(x)):=K_{N+k}(\mathcal{P}^{(1)}_{N+k}(x))\setminus F^{(2)}_{N}$. 
With this definition we claim that $K_{N,k+1}(\mathcal{P}^{(1)}_{N,k}(x))=K_{N,k}(\mathcal{P}^{(1)}_{N,k}(x))$ $\forall x\in M^{(1)}_N$. (Indeed, from \ref{mtl 8} we get for a.e. $x\in M^{(1)}_N$, $K_{N,k+1}(\mathcal{P}^{(1)}_{N,k+1}(x))=K_{N+k+1}(\mathcal{P}^{(1)}_{N+k+1}(x))\setminus F^{(2)}_{N}\subset K_{N+k}(\mathcal{P}^{(1)}_{N+k}(x))\setminus F^{(2)}_N=K_{N,k}(\mathcal{P}^{(1)}_{N,k}(x))$. This with the fact that $K_{N,k+1}(\mathcal{P}^{(1)}_{N,k+1}(x))\in \mathcal{P}^{(2)}_{N,k+1}$ and $\{\mathcal{P}^{(2)}_{N,k}\}_k$ is a monotonic sequence of partitions helps us in concluding the claim). 
Observe that \ref{mtl 4}, \ref{mtl 6} and \ref{mtl 7} guarantees that $K_{N,k}^{-1}T_{N+k}^{(2)}|_{\mathcal{P}_{N,k}^{(2)}}K_{N,k}=T_{N+k}^{(1)}|_{\mathcal{P}_{N,k}^{(1)}}$. So we can apply Lemma \ref{4.1} to guarantee the existence of a metric isomorphism $K_{(N)}:M^{(1)}_N\to M^{(2)}_N$ defined for a.e. $x\in M^{(1)}_N$ by $K_{(N)}(x)=\cap^\infty_{k=1} K_{N,k}(\mathcal{P}^{(1)}_{N,k}(x))$. This finishes claim 1. 

\vspace{2mm}

\noindent\emph{Claim 2: $K_{(N+1)}(x)=K_{(N)}(x)$ for a.e. $x\in M^{(1)}_{N}$}

Follows from the definition of $K_{(N)}$. Indeed, note that $K_{(N+1)}(x)=\cap^\infty_{k=1} K_{N+1,k}(\mathcal{P}^{(1)}_{N+1,k}(x))=\cap^\infty_{k=1}K_{N+k+1}(\mathcal{P}^{(1)}_{N+k+1}(x))\setminus F^{(2)}_{N+1}=\cap^\infty_{k=0} K_{N+k+1}(\mathcal{P}^{(1)}_{N+k+1}(x))\setminus F^{(2)}_{N+1}$. The last equality follows from \ref{mtl 8}.

\vspace{2mm}

\noindent\emph{Claim 3: There exists a metric isomorphism $K:M^{(1)}\to M^{(2)}$ satisfying $K^{-1} T^{(2)}  K=T^{(1)}$}

Note that condition \ref{mtl 1} implies that a.e. $x\in E_n^{(1)}$ for at most finitely many $n$. Indeed, if $E=\{x:x\in E_n\text{ for infinitely many } n\}$, then $E\subset\cap_{n=m}^\infty E_n\;\forall\; m$ and $\lim_{m\to\infty}\mu^{(1)}(\cap_{n=m}^\infty E_n)=0$. So we can define for a.e. $x\in M^{(1)}$, $K(x):=K_{(N)}(x)$ if $x\in M^{(1)}_N$. Now claim 3 easily follows from claim 2.
\end{proof}

\subsection{Construction of the conjugation map} \label{constr nsr}
Let $\alpha \in \mathcal{L}_{\ast}$. We construct successively a sequence of measure-preserving diffeomorphisms $T_n = H^{-1}_n \circ \phi^{\alpha_{n}} \circ H_n$, where the conjugation maps $H_n = h_n \circ H_{n-1}$ and rational numbers $\alpha_{n} = \frac{p_{n}}{q_{n}} \in \Q$ are chosen in such a way that the functions $T_n$ converge to a diffeomorphism in $\text{Diff}^{\omega}_{\rho}\left( \T^2, \mu\right)$ with the desired properties. We present step $n+1$ of the construction, i.\,e. we assume that we have already defined $H_{n}$ and the numbers $\alpha_1,...,\alpha_{n-1}$. Additionally, we choose an even integer $l_n \in \mathbb{N}$ satisfying the condition
\begin{equation} \label{eq:l1}
l_n > 2^{n+1} \cdot \|DH_{n}\|_{\rho_{n}+1},
\end{equation}
where $\rho_{n}= \| H_n \|_{\rho}$. This condition on $l_n$ will be used to ensure that the sequence of partitions we construct later is generating (see the proof of [Ba15], Proposition 6.3). In this step of the construction we have to define the conjugation map $h_{n+1}$ and to choose $\alpha_n$. \begin{figure}
\centering
\begin{tabular}{c}
{\includegraphics[width=7.5cm, height=15cm]{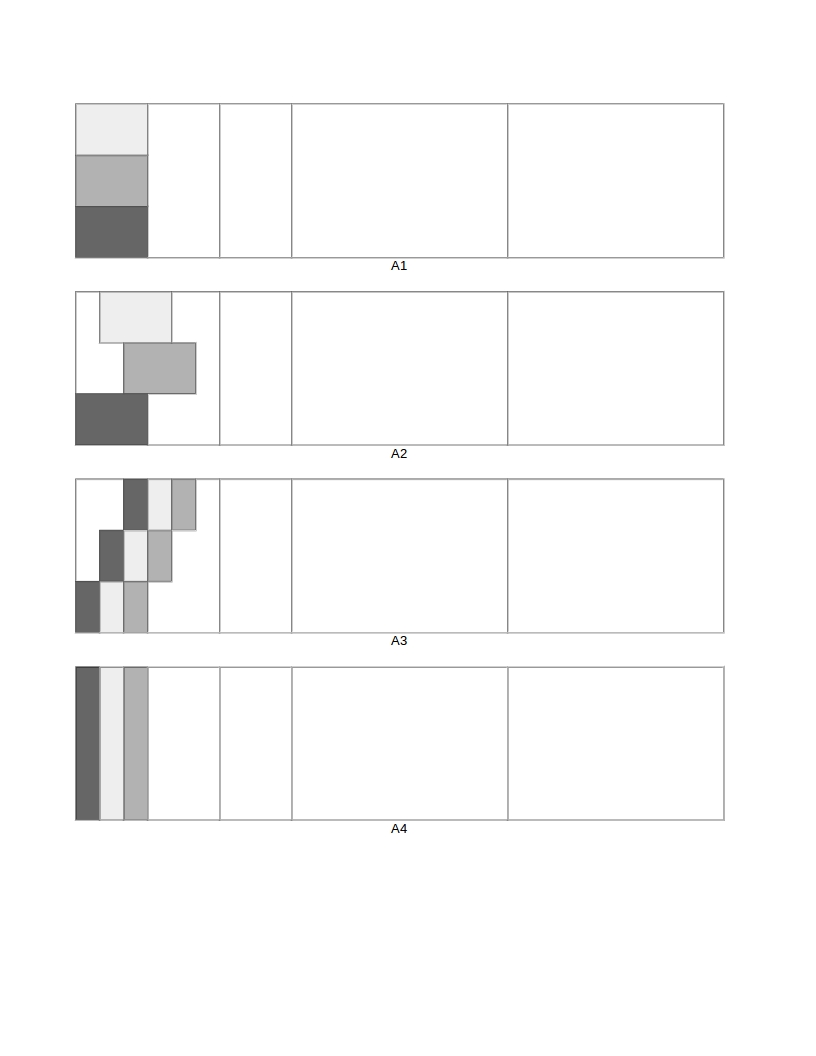}}
\end{tabular}
\caption{Illustration of the action of $\mathfrak{g}_{i,l,q}$ on $\T^2$ with $i=2,l=3$ and $q=3$. We note how the blocks are moved by the intermediate maps: $A1\xrightarrow{\mathfrak{g}_{2,3,3}^{(\mathfrak{1})}} A2\xrightarrow{\mathfrak{g}_{2,3,3}^{(\mathfrak{2})}} A3\xrightarrow{\mathfrak{g}_{2,3,3}^{(\mathfrak{3})}}A4$.}
\label{figure generating}
\end{figure}

We start by describing the main combinatorial idea behind the proof of theorems \ref{nsr circle rotation} and \ref{nsr circle rotation estimated}. Given any two integers $l$ and $q$, there exists a block-slide type map which allows us to break down the partition $\mathcal{T}_{l^dq}$ and reform it into a partition $\mathcal{G}_{l,q}$ whose atoms have diameter less than $d/l$.

First we consider the following three step functions:
\begin{align}
&\psi_{l,q}^{(\mathfrak{1})}:[0,1)\to \R &\text{ defined by}\quad &\psi_{l,q}^{(\mathfrak{1})}(x)=\sum_{i=1}^{l-1}\frac{l-i}{l^2q}\chi_{[\frac{i}{l},\frac{i+1}{l}]}(x)\nonumber\\
&\psi_{l,q}^{(\mathfrak{2})}:[0,1)\to \R &\text{ defined by}\quad &\psi_{l,q}^{(\mathfrak{2})}(x)=\sum_{i=0}^{l^2q-1}\(\frac{i}{l}-\Big\lfloor\frac{i}{l}\Big\rfloor\)\chi_{[\frac{i}{l^2q},\frac{i+1}{l^2q}]}(x)\nonumber\\
&\psi_{l,q}^{(\mathfrak{3})}:[0,1)\to \R &\text{ defined by}\quad &\psi_{l,q}^{(\mathfrak{3})}(x)=\sum_{i=0}^{l-1}\frac{i}{l^2q}\chi_{[\frac{i}{l},\frac{i+1}{l}]}(x)\label{tilde psi}
\end{align}
Then we define the following three types of block slide map:
\begin{align*}
& \mathfrak{g}_{i,l,q}^{(\mathfrak{1})}:\T^d\to\T^d\qquad\text{defined by}\qquad\mathfrak{g}_{i,l,q}^{(\mathfrak{1})}\big((x_1,\ldots,x_d)\big)=(x_1+\psi_{l,q}^{(\mathfrak{1})}(x_i),x_2,\ldots,x_d)\\
& \mathfrak{g}_{i,l,q}^{(\mathfrak{2})}:\T^d\to\T^d\qquad\text{defined by}\qquad\mathfrak{g}_{i,l,q}^{(\mathfrak{2})}\big((x_1,\ldots,x_d)\big)=(x_1,\ldots,x_{i-1},x_i+\psi_{l,q}^{(\mathfrak{2})}(x_1),x_i,\ldots,x_d)\\
& \mathfrak{g}_{i,l,q}^{(\mathfrak{3})}:\T^d\to\T^d\qquad\text{defined by}\qquad\mathfrak{g}_{i,l,q}^{(\mathfrak{3})}\big((x_1,\ldots,x_d)\big)=(x_1-\psi_{l,q}^{(\mathfrak{3})}(x_i),x_2,\ldots,x_d)
\end{align*}
Note that the composition 
\begin{align}
\mathfrak{g}_{i,l,q}:\T^d\to\T^d\qquad\qquad\text{defined by}\qquad\mathfrak{g}_{i,l,q}=\mathfrak{g}_{i,l,q}^{(\mathfrak{3})}\circ\mathfrak{g}_{i,l,q}^{(\mathfrak{2})}\circ\mathfrak{g}_{i,l,q}^{(\mathfrak{1})}
\end{align}
maps the partition $\mathcal{G}_{l^{i},q}$ to $\mathcal{G}_{l^{i+1},q}$. Where 
\begin{align}
& \mathcal{G}_{l^{j},q}:=\Big\{\big[\frac{i_1}{l^{j}q},\frac{i_1+1}{l^{j}q}\big)\times\big[\frac{i_2}{l},\frac{i_2+1}{l}\big)\times\ldots\times\big[\frac{i_{d-j+1}}{l},\frac{i_{d-j+1}+1}{l}\big)\times\T^{j-1}:i = 0,1,\ldots,lq-1,\nonumber\\
&\qquad\qquad\qquad\qquad\qquad\qquad\qquad\qquad\qquad\qquad (i_2,\ldots,i_d) \in \{0,1,\ldots,l-1\}^{d-1}\Big\}
\end{align}
So the composition
\begin{align}
\mathfrak{g}_{l,q}:\T^d\to\T^d\qquad\qquad\text{defined by}\qquad\mathfrak{g}_{l,q}=\mathfrak{g}_{2,l,q}\circ\ldots\circ\mathfrak{g}_{d,l,q}
\end{align}
maps the partition $\mathcal{G}_{l,q}$ to $\mathcal{T}_{l^dq}$.

Let $3 \varepsilon_n= \delta_n = \frac{1}{2^{n+1}}$. With the aid of Lemma \ref{lem:approx} we construct the following entire functions 
\begin{align*}
&\psi_{1,n+1} = s_{\beta^{(1)}, N^{(1)}, \varepsilon_n, \delta_n}, \text{ where } \beta^{(1)}_0=0, \ \beta^{(1)}_i = \frac{l_n-i}{l^{2}_n \cdot q_n} \text{ for } i=1,...,l_n-1, \ N^{(1)}=1 \\
&\psi_{2,n+1} = s_{\beta^{(2)}, N^{(2)}, \varepsilon_n, \delta_n}, \text{ where }  \beta^{(2)}_i=\frac{i}{l_n} \text{ for } i=0,..., l_n-1, \ N^{(2)}= l_nq_n  \\
& \psi_{3,n+1} = s_{\beta^{(3)}, N^{(3)}, \varepsilon_n, \delta_n}, \text{ where } \beta^{(3)}_i = \frac{i}{l^{2}_n \cdot q_n} \text{ for } i=0,...,l_n-1, \ N^{(3)}=1
\end{align*}
Let $A_{i,n+1}$ denote the corresponding number in the construction of $\psi_{i,n+1}$ from Lemma \ref{lem:approx}. Using these functions we define the conjugation maps
\begin{align*}
h_{1,n+1}(x_1,x_2) & = \left( x_1 + \psi_{1,n+1}(x_2) \mod 1, x_2 \right) \\
h_{2,n+1}(x_1,x_2) & = \left( x_1, x_2+ \psi_{2,n+1}(x_1) \mod 1 \right) \\
h_{3,n+1}(x_1,x_2) & = \left( x_1 - \psi_{3,n+1}(x_2) \mod 1, x_2 \right) 
\end{align*}
approximating the maps $\mathfrak{g}_{2,l_n,q_n}^{(\mathfrak{j})}$ form above. Finally, we put
\begin{equation*}
h_{n+1} = h_{3,n+1} \circ h_{2,n+1} \circ h_{1,n+1}.
\end{equation*}
We point out that $\phi^{\alpha_n} \circ h_{n+1} = h_{n+1} \circ \phi^{\alpha_n}$. 

\begin{lemma} \label{lem:A}
Let $l_n \geq 4$. In the concrete situation of our constructions we can choose
\begin{equation*}
A_{i,n+1} = 2^{2n+5} \cdot l^2_n.
\end{equation*}
\end{lemma}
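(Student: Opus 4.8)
The plan is to trace the hypotheses \eqref{eq:app1} and \eqref{eq:app2} of Lemma~\ref{lem:approx} through each of the three concrete choices $\beta^{(1)}, \beta^{(2)}, \beta^{(3)}$ and their parameters, and to check that the single value $A = 2^{2n+5} \cdot l_n^2$ simultaneously beats both lower bounds in all three cases, using $\varepsilon_n = \frac{1}{3 \cdot 2^{n+1}}$ and $\delta_n = \frac{1}{2^{n+1}}$. First I would record the two quantities that control the bounds: in \eqref{eq:app1} the relevant factor is $-\ln(-\ln(1-\varepsilon_n/8))$, and since $\varepsilon_n/8$ is small, $-\ln(1-\varepsilon_n/8) \approx \varepsilon_n/8$, so this factor is roughly $\ln(8/\varepsilon_n) = \ln(24 \cdot 2^{n+1})$, which grows only linearly in $n$. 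In \eqref{eq:app2} the relevant factor is $\ln(-\ln(\varepsilon_n/(2l_n)))$; here $\varepsilon_n/(2l_n)$ is small so $-\ln(\varepsilon_n/(2l_n)) = \ln(2l_n/\varepsilon_n) = \ln(2l_n \cdot 3 \cdot 2^{n+1})$, and then one more logarithm is taken, so this factor is of order $\ln(n + \ln l_n)$, i.e.\ even smaller. Both get multiplied by $\frac{2l}{\pi\delta_n}$ where $l \in \{l_n, l_n\}$ (the block-length parameter of the relevant step function, which is $l_n$ in all three cases) and $\frac{1}{\delta_n} = 2^{n+1}$, giving a bound of order $l_n \cdot 2^{n+1} \cdot (\text{linear in } n \text{ or smaller})$.

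Next I would make these estimates completely explicit using crude but valid inequalities, so as to avoid any appeal to asymptotics. For \eqref{eq:app1}: using $-\ln(1-t) \leq 2t$ for $0 \le t \le 1/2$ with $t = \varepsilon_n/8 < 1/8$, we get $-\ln(-\ln(1-\varepsilon_n/8)) \ge -\ln(\varepsilon_n/4) = \ln(4/\varepsilon_n)$, and here one must be careful about the sign of the logarithm --- since $\varepsilon_n < 1/8$ this is positive --- so the bound in \eqref{eq:app1} is at most $\frac{2 l_n}{\pi \delta_n} \ln(4/\varepsilon_n) = \frac{2 l_n \cdot 2^{n+1}}{\pi} \ln(12 \cdot 2^{n+1})$, which one checks is comfortably below $2^{2n+5} l_n^2$ for $l_n \ge 4$ and $n \ge 1$ (the spare factor of $l_n \ge 4$ and an extra power of $2^n$ absorb the logarithm). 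For \eqref{eq:app2}: using $\ln(2l_n/\varepsilon_n) = \ln(6 l_n \cdot 2^{n+1}) \le l_n + n + 3$ (a weak bound, since $\ln$ is far below the identity), one more logarithm gives $\ln(l_n + n + 3)$, and $\frac{2 l_n \cdot 2^{n+1}}{\pi} \ln(l_n + n + 3)$ is again well below $2^{2n+5} l_n^2$. One should also verify that $\varepsilon_n \in (0, \tfrac18)$ and $\delta_n \in (0,1)$, which hold since $n \ge 1$ forces $\varepsilon_n = \frac{1}{3 \cdot 2^{n+1}} \le \frac{1}{12} < \frac18$.

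The main (and really only) obstacle is bookkeeping: one has to confirm that the block-parameter $l$ appearing in Lemma~\ref{lem:approx} equals $l_n$ for each of $\psi_{1,n+1}$, $\psi_{2,n+1}$, $\psi_{3,n+1}$ (rather than, say, $l_n q_n$, which is the periodicity parameter $N$ and does \emph{not} enter \eqref{eq:app1}--\eqref{eq:app2}), and to be vigilant about signs inside the nested logarithms so that no bound is accidentally negative or vacuous. Since the choice of $A$ is uniform across the three step functions and the numerical slack is enormous (the claimed $A$ is exponential in $n$ while the constraints are essentially polynomial in $n$ times $l_n$), once the parameters are correctly identified the verification is a short chain of elementary inequalities. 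I would therefore present the proof as: (i) identify the parameters $(l, N, \varepsilon, \delta)$ for each $\psi_{i,n+1}$; (ii) state the two elementary logarithmic inequalities used; (iii) plug in and observe $2^{2n+5} l_n^2$ dominates both right-hand sides for $l_n \ge 4$; conclude by Lemma~\ref{lem:approx}.
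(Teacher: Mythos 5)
Your overall strategy is the same as the paper's: plug the explicit values $\varepsilon_n=\tfrac{1}{3\cdot 2^{n+1}}$, $\delta_n=\tfrac{1}{2^{n+1}}$ (and $l=l_n$, the parameter $N$ being irrelevant for \ref{eq:app1}--\ref{eq:app2}) into the two hypotheses of Lemma \ref{lem:approx} and check them by elementary logarithmic estimates, observing that $2^{2n+5}l_n^2$ has enormous slack. The paper streamlines this by bounding the quantity in \ref{eq:app1} with a single expression $\ln\left(2l_n\varepsilon_n^{-1}\right)<\ln\left(2^{n+4}l_n\right)$ (this is where $l_n\geq 4$ is used, since $\varepsilon_n/(2l_n)\leq\varepsilon_n/8$), so that \ref{eq:app2}, which only involves the double logarithm $\ln\left(\ln\left(2l_n\varepsilon_n^{-1}\right)\right)$, is ``satisfied automatically''; you instead treat the two conditions separately, which is equally fine.

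There is, however, one step in your explicit verification of \ref{eq:app1} that points the wrong way. From $-\ln(1-t)\leq 2t$ you correctly deduce $-\ln\left(-\ln\left(1-\tfrac{\varepsilon_n}{8}\right)\right)\geq\ln\left(4/\varepsilon_n\right)$, but this is a \emph{lower} bound on the quantity appearing on the right-hand side of \ref{eq:app1}, and from it you cannot conclude that this right-hand side ``is at most $\tfrac{2l_n}{\pi\delta_n}\ln(4/\varepsilon_n)$'' --- in fact it is at \emph{least} that. What you need is the opposite elementary inequality $-\ln(1-t)\geq t$, which gives the upper bound
\begin{equation*}
-\ln\left(-\ln\left(1-\tfrac{\varepsilon_n}{8}\right)\right)\leq\ln\left(\tfrac{8}{\varepsilon_n}\right)=\ln\left(24\cdot 2^{n+1}\right),
\end{equation*}
exactly the estimate (with $\varepsilon_n/(2l_n)$ in place of $\varepsilon_n/8$) used in the paper via $-\ln(-\ln(1-x))\leq-\ln(x)$. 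With this correction your chain goes through with even more room ($\tfrac{2l_n}{\pi\delta_n}\ln(24\cdot 2^{n+1})\leq 2^{n+1}l_n\,(n+4)\leq 2^{2n+5}l_n^2$), and your treatment of \ref{eq:app2} and of the parameter bookkeeping is correct; so this is a local, easily repaired slip rather than a flaw in the approach.
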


\begin{proof}
Using Taylor expansion and the notation $x=\frac{\varepsilon_n}{2l_n}$ we calculate
\begin{equation*}
-\ln(-\ln(1-x))=-\ln\left( x + \frac{x^2}{2}+\frac{x^3}{3} + O(x^4) \right) \leq - \ln(x) = \ln(x^{-1}).
\end{equation*}
By our explicit definition of the number $\varepsilon_n$ we get
\begin{equation*}
\ln \left(2l_n \cdot \varepsilon^{-1}_n \right) = \ln \left( 2 l_n \cdot 3 \cdot 2^{n+1} \right) < \ln \left(  2^{n+4} \cdot l_n \right).
\end{equation*}
Then condition \ref{eq:app1} yields the requirement
\begin{equation*}
A_{i,n+1} \geq 2^{n+1} \cdot l_n \cdot \ln \left(  2^{n+4} \cdot l_n \right).
\end{equation*}
We note that condition \ref{eq:app2} is satisfied automatically.
\end{proof}

\subsection{Proof of Convergence}
Let $\rho>0$ be arbitrary. We want to prove convergence of $\left(T_n\right)_{n \in \N}$ in Diff$^{\omega}_{\rho}\left(\mathbb{T}^m\right)$. For this purpose, we introduce the numbers
\begin{equation*}
\rho_k = \|H_k\|_{\rho} \text { for any } k \in \N.
\end{equation*}
Using the definitions of the conjugation maps we compute
\begin{align*}
& h^{-1}_{n+1} \circ \phi^{\alpha_{n+1}} \circ h_{n+1} \left(x_1,x_2 \right) =
\Bigg{(} x_1+\alpha_{n+1}+\psi_{1,n+1}(x_2) - \\
&\qquad\qquad\qquad \psi_{1,n+1} \bigg{(} x_2+ \psi_{2,n+1}\left( x_1 + \psi_{1,n}(x_2) \right) - \psi_{2,n+1} \Big{(} x_1 + \alpha_{n+1} + \psi_{1,n+1}(x_2)\Big{)} \bigg{)} , \\
& \qquad\qquad\qquad\qquad\qquad \ x_2 + \psi_{2,n+1}\left( x_1 + \psi_{1,n+1}(x_2) \right)- \psi_{2,n+1} \bigg{(} x_1 + \alpha_{n+1}+ \psi_{1,n+1}(x_2)  \bigg{)} \Bigg{)}
\end{align*}
We recall that $\psi_{2,n}$ is $\frac{1}{q_n}$-periodic and get
\begin{align*}
& h^{-1}_{n+1} \circ \phi^{\alpha_{n+1}} \circ h_{n+1} \left(x_1,x_2 \right) = \Bigg{(} x_1+\alpha_{n+1}+\psi_{1,n+1}(x_2) - \\
&\qquad\qquad\qquad \psi_{1,n+1} \bigg{(} x_2+ \psi_{2,n+1}\left( x_1 + \alpha_n + \psi_{1,n+1}(x_2) \right) - \psi_{2,n+1} \Big{(} x_1 + \alpha_{n+1} + \psi_{1,n+1}(x_2)\Big{)} \bigg{)} , \\
&\qquad\qquad\qquad\qquad\qquad \ x_2 + \psi_{2,n+1}\left( x_1 + \alpha_n+ \psi_{1,n+1}(x_2) \right)- \psi_{2,n+1} \bigg{(} x_1 + \alpha_{n+1}+ \psi_{1,n+1}(x_2)  \bigg{)} \Bigg{)}
\end{align*}
Hereby, we conclude
\begin{align*}
& \left( h^{-1}_{n+1} \circ \phi^{\alpha_{n+1}} \circ h_{n+1}  - \phi^{\alpha_n}\right) \left(x_1,x_2 \right) =  \Bigg{(} \alpha_{n+1} - \alpha_n +\psi_{1,n+1}(x_2) -\\ &\qquad\qquad\qquad \psi_{1,n+1} \bigg{(} x_2+ \psi_{2,n+1}\left( x_1 + \alpha_n + \psi_{1,n+1}(x_2) \right) - \psi_{2,n+1} \Big{(} x_1 + \alpha_{n+1} + \psi_{1,n+1}(x_2)\Big{)} \bigg{)} , \\
&\qquad\qquad\qquad\qquad\qquad \ \psi_{2,n+1}\left( x_1 + \alpha_n+ \psi_{1,n+1}(x_2) \right)- \psi_{2,n+1} \bigg{(} x_1 + \alpha_{n+1}+ \psi_{1,n+1}(x_2)  \bigg{)} \Bigg{)}
\end{align*}

In the next step, we exploit the closeness of $h^{-1}_{n+1} \circ \phi^{\alpha_{n+1}} \circ h_{n+1} \left(x_1,x_2 \right)$ to $\phi^{\alpha_n}$ and find
\begin{align*}
d_{\rho} \left(f_{n+1}, f_{n} \right) = & d_{\rho} \left(H^{-1}_{n}\circ h^{-1}_{n+1} \circ \phi^{\alpha_{n+1}} \circ h_{n+1} \circ H_{n}, H^{-1}_{n}\circ  \phi^{\alpha_{n}} \circ H_{n} \right) \\
\leq & \|DH_{n}\|_{\rho_{n}+1} \cdot \|h^{-1}_{n+1} \circ \phi^{\alpha_{n+1}} \circ h_{n+1} - \phi^{\alpha_n}\|_{\rho_{n}}.
\end{align*}

In order to estimate $\|h^{-1}_{n+1} \circ \phi^{\alpha_{n+1}} \circ h_{n+1} - \phi^{\alpha_n}\|_{\rho_{n}}$ we will use the subsequent result:

\begin{lemma} \label{lem:est}
Let $\rho>0$ and $B^{\rho}= \left\{ z \in \mathbb{C} \; : \; \left|\text{im}\left(z\right)\right|< \rho\right\}$. Then we have
\begin{equation*}
\sup_{z \in B^{\rho}} \abs{s_{\beta, N, \varepsilon, \delta}(z)} \leq 2\pi \cdot N \cdot A \cdot \ee^{2 \ee^{A \cdot \ee^{2\pi N \rho}} +A \cdot \ee^{2\pi N \rho} +  2\pi N \rho}
\end{equation*}
and
\begin{equation*}
\sup_{z_1,z_2 \in B^{\rho}} \abs{s_{\beta, N, \varepsilon, \delta}(z_1)-s_{\beta, N, \varepsilon, \delta}(z_2)} \leq C \cdot A \cdot l \cdot N \cdot \ee^{4 \cdot \ee^{A \cdot \ee^{2 \pi N \rho}}} \cdot \abs{z_1-z_2},
\end{equation*}
where $C$ is a constant independent of $n$, $l$ and $N$.
\end{lemma}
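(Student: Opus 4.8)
The plan is to bound the supremum of $\abs{s_{\beta,N,\varepsilon,\delta}(z)}$ on $B^\rho$ directly from its explicit formula, and then to deduce the Lipschitz estimate via a Cauchy-estimate argument together with the maximum modulus principle on a slightly larger strip. Throughout, the essential observation is that $s_{\beta,N,\varepsilon,\delta}$ is built out of the single "bump" building block $z\mapsto \ee^{-\ee^{-A\sin(2\pi Nz)}}$ (and the reflected block with $+A$), composed with translates $z-\tfrac{i}{l}$ that only shift the argument by a real amount, so it suffices to control this one function. Since $\abs{\beta_i}\le 1$ and there are $O(l)$ summands, any bound on a single block multiplied by $Cl$ gives the result, which is why the $l$-dependence in the second estimate is only linear and absent from the first (there the $\beta_i$'s are being absorbed into the final exponential, as I discuss below).

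First I would estimate $\abs{\ee^{-\ee^{-A\sin(2\pi Nz)}}}$ for $z=x+iy$ with $\abs{y}<\rho$. Write $w=-A\sin(2\pi Nz)$; then $\abs{\ee^{-\ee^{w}}}=\ee^{-\mathrm{Re}(\ee^{w})}\le \ee^{\abs{\ee^w}}=\ee^{\ee^{\mathrm{Re}(w)}}$. Now $\mathrm{Re}(w)=-A\,\mathrm{Re}(\sin(2\pi Nz))$ and $\abs{\sin(2\pi Nz)}\le \cosh(2\pi Ny)\le \ee^{2\pi N\rho}$, so $\mathrm{Re}(w)\le A\ee^{2\pi N\rho}$ and hence $\abs{\ee^{-\ee^{-A\sin(2\pi Nz)}}}\le \ee^{\ee^{A\ee^{2\pi N\rho}}}$; the same bound holds for the $+A$ block. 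Multiplying together the two nested factors appearing in each term of the formula for $s_{\beta,N,\varepsilon,\delta}$ (an inner difference of two such blocks, times an outer block) and summing the $\le l$ terms with $\abs{\beta_i}\le 1$, one obtains a crude bound of the shape $2l\cdot \ee^{2\ee^{A\ee^{2\pi N\rho}}}$, which is comfortably below the claimed right-hand side. The extra factors $2\pi N A\,\ee^{A\ee^{2\pi N\rho}+2\pi N\rho}$ in the statement give slack; indeed one should keep track of them because they will be needed when differentiating. The cleanest route to the stated form is to note $\ee^{A\ee^{2\pi N\rho}}\ge$ (something large) for the relevant parameters, so that $2l\le \ee^{A\ee^{2\pi N\rho}}$ and the crude bound is absorbed — this is where one uses that in our constructions $A$ and $N$ are large, but as stated the lemma should hold for all $\rho>0$ and all admissible parameters, so I would instead simply carry the $l$ and $N$ factors along and verify the inequality termwise.

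For the second (Lipschitz) estimate I would proceed by the standard device: for $z_1,z_2\in B^\rho$, if $\abs{z_1-z_2}\ge 1$ the bound follows trivially from the sup estimate on $B^\rho$ (with room to spare), so assume $\abs{z_1-z_2}<1$; then $z_1,z_2$ and the segment joining them lie in $B^{\rho+1}$, and
\begin{equation*}
\abs{s_{\beta,N,\varepsilon,\delta}(z_1)-s_{\beta,N,\varepsilon,\delta}(z_2)}\le \abs{z_1-z_2}\cdot \sup_{\zeta\in B^{\rho+1}}\abs{s_{\beta,N,\varepsilon,\delta}'(\zeta)},
\end{equation*}
and finally $\sup_{\zeta\in B^{\rho+1}}\abs{s_{\beta,N,\varepsilon,\delta}'(\zeta)}$ is estimated by Cauchy's integral formula on a disc of radius $1$ around $\zeta$ (contained in $B^{\rho+2}$) in terms of $\sup_{B^{\rho+2}}\abs{s_{\beta,N,\varepsilon,\delta}}$, to which the first part of the lemma applies with $\rho$ replaced by $\rho+2$. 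This produces a bound of the form $C\cdot A\cdot l\cdot N\cdot \ee^{c\,\ee^{A\ee^{2\pi N(\rho+2)}}}$; to get exactly the exponent $4\ee^{A\ee^{2\pi N\rho}}$ claimed in the statement one either absorbs the shift $\rho\mapsto\rho+2$ into the constant $C$ (legitimate only if $C$ is allowed to depend on $\rho$, which the statement seems to permit since it only asserts independence from $n,l,N$), or, better, differentiates the explicit formula term by term using the chain rule — the derivative of a single block $\ee^{-\ee^{-A\sin(2\pi Nz)}}$ is $2\pi N A\cos(2\pi Nz)\,\ee^{-A\sin(2\pi Nz)}\,\ee^{-\ee^{-A\sin(2\pi Nz)}}$, whose modulus on $B^\rho$ is bounded by $2\pi N A\,\ee^{2\pi N\rho}\cdot \ee^{A\ee^{2\pi N\rho}}\cdot \ee^{\ee^{A\ee^{2\pi N\rho}}}$, and the product rule over the $\le l$ summands (each a product of three blocks) contributes a factor $3l$ and multiplies at most four nested-exponential factors, giving the exponent $4\ee^{A\ee^{2\pi N\rho}}$ exactly.

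The main obstacle is purely bookkeeping: keeping the tower-of-exponentials estimates honest through the product rule and chain rule without losing the precise constant $4$ in the top exponent, and deciding whether to route through a Cauchy estimate (cleaner, but shifts $\rho$) or through explicit differentiation (messier, but gives the stated exponent on the nose). I expect the explicit-differentiation route to be the one the authors intend, since the factor $2\pi N A\,\ee^{A\ee^{2\pi N\rho}+2\pi N\rho}$ already appearing in the first estimate is exactly what the derivative of one block contributes, strongly suggesting the proof just differentiates the closed formula and collects terms. No serious analytic difficulty arises; everything reduces to the elementary inequalities $\abs{\ee^{-\ee^w}}\le \ee^{\ee^{\mathrm{Re} w}}$, $\abs{\sin(2\pi Nz)},\abs{\cos(2\pi Nz)}\le \ee^{2\pi N\abs{\mathrm{Im} z}}$, and $\abs{\beta_i}\le 1$.
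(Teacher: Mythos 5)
Your preferred route is essentially the paper's: the authors differentiate the closed formula for one building block, obtaining exactly the factor $2\pi A N\cos(2\pi(Nz-\tfrac{i}{l}))\,\ee^{-A\sin(2\pi(Nz-i/l))}\,\ee^{-\ee^{-A\sin(2\pi(Nz-i/l))}}$ you wrote down, bound $\|Ds_{\beta,N,\varepsilon,\delta}\|_{\rho}$ termwise by $6\pi\cdot A\cdot l\cdot N\cdot \ee^{4\ee^{A\ee^{2\pi N\rho}}}$ using the same elementary inequalities ($|\sin|,|\cos|\le\ee^{2\pi N\rho}$, $|\ee^{-\ee^{w}}|\le\ee^{\ee^{\operatorname{Re}w}}$, $x\le\ee^{x}$), and then get the Lipschitz estimate by the mean value inequality along the segment, which stays in the convex strip $B^{\rho}$ -- so your case split $|z_1-z_2|\ge 1$ and the enlargement to $B^{\rho+1}$ are unnecessary. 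Two points of comparison. First, for the sup bound the paper does not use your crude count of $l$ summands: it applies the mean value theorem to each inner difference of the two translates (which differ in argument by $\tfrac1l$), gaining a factor $\tfrac1l$ that cancels the $l$ summands; this is why their bound carries the factor $2\pi NA\,\ee^{A\ee^{2\pi N\rho}+2\pi N\rho}$ and needs no parameter constraint, whereas your bound $2l\,\ee^{2\ee^{A\ee^{2\pi N\rho}}}$ only fits under the stated right-hand side after invoking condition (A2) (which forces $A>2l/\pi$, so $2l\le 2\pi NA$) -- legitimate, since the lemma concerns the function built in Lemma \ref{lem:approx}, but you should say this explicitly rather than ``verify termwise.'' Second, your fallback Cauchy-estimate route would not prove the lemma as stated: the shift $\rho\mapsto\rho+2$ replaces $\ee^{A\ee^{2\pi N\rho}}$ by $\ee^{A\ee^{2\pi N\rho}\ee^{4\pi N}}$, a discrepancy depending on $N$ and on $A=A_{i,n+1}$ (hence on $n$ and $l$), so it cannot be absorbed into a constant $C$ that the lemma requires to be independent of $n$, $l$, $N$; since you ultimately discard that route in favour of direct differentiation, this does not damage the proposal, but the hedge about ``absorbing the shift into $C$'' should be dropped.
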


\begin{proof}
First of all, we observe
\begin{equation*}
\frac{\mathrm{d}}{\mathrm{d}z}\ee^{-\ee^{-A \cdot \sin\left(2\pi\left(Nz- \frac{i}{l}\right)\right)}} = \ee^{-\ee^{-A \cdot \sin\left(2\pi\left(Nz- \frac{i}{l}\right)\right)}-A  \cdot \sin\left(2\pi\left(Nz- \frac{i}{l}\right)\right)} \cdot 2 \pi \cdot A \cdot N \cdot \cos \left(2\pi \left(Nx-\frac{i}{l}\right) \right) 
\end{equation*}
Using the mean value theorem this yields
\begin{align*}
\sup_{z \in B^{\rho}} \abs{s_{\beta, N, \varepsilon, \delta}(z)} & \leq l \cdot \ee^{\ee^{A \cdot \ee^{2\pi N \rho}}} \cdot 2\pi \cdot N \cdot A \cdot \ee^{A \cdot \ee^{2\pi N \rho}} \cdot \ee^{2\pi N \rho} \cdot \frac{1}{l} \cdot \ee^{\ee^{A \cdot \ee^{2\pi N \rho}}} \\
& \leq 2\pi \cdot N \cdot A \cdot \ee^{2 \ee^{A \cdot \ee^{2\pi N \rho}} +A \cdot \ee^{2\pi N \rho} +  2\pi N \rho}
\end{align*}
Additionally, we get
\begin{align*}
& \|Ds_{\beta,N,\varepsilon, \delta}\|_{\rho} \\
\leq & l \cdot \ee^{2 \ee^{A\cdot \ee^{2\pi N \rho}}} \cdot 4\pi \cdot A \cdot N \cdot \ee^{A\cdot \ee^{2\pi N \rho}} \cdot \ee^{2\pi \rho N} +  2\pi \cdot A \cdot N \cdot \ee^{2 \ee^{A\cdot \ee^{2\pi N \rho}}} \cdot \ee^{2A\cdot \ee^{2\pi N \rho}} \cdot \ee^{4\pi \rho N}  \\
\leq & 6\pi \cdot A \cdot l \cdot N \cdot \ee^{4 \cdot \ee^{A \cdot \ee^{2 \pi N \rho}}}.
\end{align*}
By applying the mean value theorem we obtain the second statement of the Lemma.
\end{proof}

In addition to the before mentioned conditions we require the number $l_n$ to satisfy
\begin{equation} \label{eq:l2}
l_n > \ee^{2\pi \cdot (\rho_{n}+1)}.
\end{equation}
Finally, we are able to prove convergence of the sequence $\left( f_n \right)_{n \in \mathbb{N}}$:

\begin{lemma} \label{lem:conv}
Fix $\rho >0$. Then there is a sequence $\left(\alpha_n\right)_{n \in \N}$ of rational numbers converging to $\alpha$ monotonically such that the sequence $\left( T_n \right)_{n \in \mathbb{N}}$ converges to $T$ in Diff$^{\omega}_{\rho}\left(\mathbb{T}^2\right)$.
\end{lemma}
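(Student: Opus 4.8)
The plan is to show that once the combinatorial and analytic structure of $h_{n+1}$ has been fixed (which already forces a lower bound on $l_n$ and the value $A_{i,n+1} = 2^{2n+5} l_n^2$ from Lemma \ref{lem:A}), the only remaining free parameter is $\alpha_n$, equivalently the size of $|\alpha_{n+1} - \alpha_n|$, which we control by choosing the denominators $q_n$ along the sequence of convergents to $\alpha$ afforded by the hypothesis $\alpha \in \mathcal{L}_{\ast}$. Concretely, I would first record that by the displayed computation preceding Lemma \ref{lem:est} we have
\begin{equation*}
d_\rho(T_{n+1}, T_n) \leq \|DH_n\|_{\rho_n + 1} \cdot \|h^{-1}_{n+1} \circ \phi^{\alpha_{n+1}} \circ h_{n+1} - \phi^{\alpha_n}\|_{\rho_n},
\end{equation*}
and the same bound for the inverses. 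Then I would estimate the second factor: in the explicit formula for $(h^{-1}_{n+1} \circ \phi^{\alpha_{n+1}} \circ h_{n+1} - \phi^{\alpha_n})(x_1,x_2)$ every entry is a difference of the form $|\alpha_{n+1} - \alpha_n|$ plus differences of $\psi_{1,n+1}$ or $\psi_{2,n+1}$ evaluated at two points whose separation is itself $O(|\alpha_{n+1} - \alpha_n|)$ after using the $\frac{1}{q_n}$-periodicity of $\psi_{2,n+1}$ (this is exactly why the $\alpha_n$ appears in place of $\alpha_{n+1}$ in the second displayed computation). Applying the Lipschitz estimate of Lemma \ref{lem:est} on the strip $B^{\rho_n}$ gives
\begin{equation*}
\|h^{-1}_{n+1} \circ \phi^{\alpha_{n+1}} \circ h_{n+1} - \phi^{\alpha_n}\|_{\rho_n} \leq C_n \cdot |\alpha_{n+1} - \alpha_n|
\end{equation*}
for an explicit constant $C_n$ built out of $A_{i,n+1}$, $l_n$, $q_n$, and $\rho_n$ — a tower of exponentials, but a fixed finite quantity once $n$ is fixed.

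Next I would assemble the constraint on $|\alpha_{n+1} - \alpha_n|$. Combining the two displays, to guarantee $d_\rho(T_{n+1}, T_n) < \varepsilon_{n+1} < 2^{-q_{n+1}}$ it suffices that
\begin{equation*}
|\alpha_{n+1} - \alpha_n| < \frac{\varepsilon_{n+1}}{\|DH_n\|_{\rho_n+1} \cdot C_n} =: \eta_n,
\end{equation*}
and similarly for the $d_0$-estimates of the iterates $T_m^i$, $0 \le i < q_n - 1$, which are weaker since they involve no complex strip and only finitely many iterates; shrinking $\eta_n$ further absorbs them. Recall $\alpha_{n+1} = \alpha_n + \frac{1}{s_n k_n l_n q_n^2}$, so the requirement becomes $s_n k_n l_n q_n^2 > \eta_n^{-1}$, which can always be met by taking $s_n$ large enough; this is precisely the role of the "slow" parameter $s_n$ in the AbC scheme. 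So convergence of $(T_n)$ in $\mathrm{Diff}^\omega_\rho(\T^2,\mu)$ is automatic from the construction once the $s_n$ are chosen; the substantive claim is that the $\alpha_n$ can simultaneously be arranged to converge \emph{to the prescribed number $\alpha$}.

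For that I would use $\alpha \in \mathcal{L}_{\ast}$: the defining property gives, for each $k$, a reduced fraction $p/q$ with $|\alpha - p/q| < e^{-e^{k^q}}$. The idea is to build the sequence $\alpha_n = p_n/q_n$ inductively so that $q_{n+1} = s_n k_n l_n q_n^2$ and $\alpha_{n+1}$ is one of these super-exponentially-good approximants, choosing the index $k$ in the definition of $\mathcal{L}_{\ast}$ large enough (depending on everything constructed so far) that both $|\alpha_{n+1} - \alpha_n| < \eta_n$ \emph{and} $|\alpha - \alpha_{n+1}| < \eta_{n+1}/2$, say, hold; the triangle inequality then forces $\alpha_n \to \alpha$, and monotonicity can be arranged by always approaching $\alpha$ from one side. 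The one delicate point — and the step I expect to be the main obstacle — is the \emph{compatibility} between the rigid denominator recursion $q_{n+1} = s_n k_n l_n q_n^2$ demanded by the AbC bookkeeping and the denominators $q$ that actually occur in the $\mathcal{L}_{\ast}$-approximants of $\alpha$: one must check that after fixing $k_n, l_n$ (the latter forced to be large by \eqref{eq:l1} and \eqref{eq:l2}) there is still enough freedom in $s_n$ to hit a denominator of the form required while landing on a genuine good approximant of $\alpha$. This is handled exactly as in \cite[section 4]{AK} / \cite{Ba-Ns}: one does not insist $q_{n+1}$ itself be the approximant denominator, but rather picks the approximant $p/q$ first with $q$ enormous, then observes $q$ can be written (after possibly enlarging) in the form $s_n k_n l_n q_n^2$ by choosing $s_n$, absorbing any slack; the double-exponential strength of the $\mathcal{L}_{\ast}$ condition is what makes the resulting error $|\alpha - p/q|$ small enough to beat the tower-exponential constant $\eta_n^{-1}$. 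I would close by noting that the limit $T$ lies in $\mathrm{Diff}^\omega_\rho(\T^2,\mu)$ because each $T_n \in \mathrm{Diff}^\omega_\infty(\T^2,\mu)$ and $d_\rho(T_n, T_{n+1})$ is summable, so $(T_n)$ is Cauchy in the complete metric space $(\mathrm{Diff}^\omega_\rho(\T^2,\mu), d_\rho)$.
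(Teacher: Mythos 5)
Your first half runs parallel to the paper: the reduction $d_\rho(T_{n+1},T_n)\leq \|DH_n\|_{\rho_n+1}\cdot\|h_{n+1}^{-1}\circ\phi^{\alpha_{n+1}}\circ h_{n+1}-\phi^{\alpha_n}\|_{\rho_n}$, the use of the $\frac{1}{q_n}$-periodicity of $\psi_{2,n+1}$, and the Lipschitz bound from Lemma \ref{lem:est} giving a factor $C_n\cdot|\alpha_{n+1}-\alpha_n|$ with $C_n$ a tower built from $A_{i,n+1},l_n,q_n,\rho_n$ is exactly what the paper does. The $d_0$-estimates on the iterates are indeed absorbed the same way.

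The gap is in how you force $\alpha_n\to\alpha$. You keep the rigid recursion $\alpha_{n+1}=\alpha_n+\frac{1}{s_nk_nl_nq_n^2}$ from the general scheme of subsection \ref{subsection abc method} and claim the $\mathcal{L}_{\ast}$-approximant $p/q$ can be absorbed into it "by choosing $s_n$". This step fails: an arbitrary approximant denominator $q$ need not be divisible by $k_nl_nq_n^2$, and even when it is, setting $q_{n+1}=s_nk_nl_nq_n^2=q$ forces the numerator $p_{n+1}=s_nk_nl_nq_np_n+1$, so $\alpha_{n+1}$ is completely determined and there is no freedom left to make it coincide with the chosen good approximant of the \emph{prescribed} $\alpha$; with that recursion the limit rotation number is whatever the construction produces, not an arbitrary element of $\mathcal{L}_{\ast}$ (prescribing it would need FSW-type machinery, which is exactly what is unavailable analytically). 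The paper sidesteps this entirely: in the construction of Section \ref{constr nsr} the recursion is never used. The only structural constraints are that $h_{n+1}$ commute with $\phi^{\alpha_n}$ — automatic since $\psi_{2,n+1}$ is $\frac{1}{l_nq_n}$-periodic, with no divisibility relation imposed between $q_{n+1}$ and $q_n$ — together with the lower bound \ref{eq:condq} on $q_n$. Then $\alpha_n=p_n/q_n$ is chosen \emph{directly} as an $\mathcal{L}_{\ast}$-approximant: the crucial observation is that the tower constant depends on $q_n$ only through an expression of the form $\ee^{\ee^{K_n^{q_n}}}$ with $K_n=2^{2n+6}l_n^3\ee^{2\pi\rho'_n}$ independent of $q_n$, which is precisely the shape the condition $\abs{\alpha-p/q}<\ee^{-\ee^{k^q}}$ (with $k\geq K_n$) is designed to beat, whatever denominator $q$ the approximant happens to have; the increment is then controlled the other way around from your argument, via $\abs{\alpha_{n+1}-\alpha_n}\leq 2\abs{\alpha-\alpha_n}$. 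Your closing remark gestures at this matching of shapes, but without dropping the recursion the self-referential choice of $q_n$ cannot be carried out, so as written the proof of convergence to the prescribed $\alpha$ does not go through.
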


\begin{proof}
First of all, we introduce the number
\begin{equation*}
\rho^{\prime}_n = \rho_{n}+2\pi \cdot A_{1,n+1} \cdot \ee^{2 \ee^{A_{1,n+1} \cdot \ee^{2\pi \rho_{n}}} +A_{1,n+1} \cdot \ee^{2\pi \rho_{n}} +  2\pi \rho_{n}}
\end{equation*}
We recall that $C$ as well as the requirements on $l_n$ in equations \ref{eq:l1} and \ref{eq:l2} are independent of $q_n$. Hence, we can state the subsequent condition on $q_n$:
\begin{equation} \label{eq:condq}
q_n \geq 2 C^2 \cdot l_n \cdot \ee^{4 \cdot \ee^{2^{2n+5}l^3_n}}
\end{equation}
Under this restriction on the number $q_n$ we find $\alpha_n = \frac{p_n}{q_n}$ with $p_n,q_n$ relatively prime such that
\begin{equation*}
\abs{\alpha - \alpha_n } < \frac{1}{\ee^{\ee^{\left(2^{2n+6} \cdot l^3_n \cdot \ee^{2 \pi \cdot \rho^{\prime}_n} \right)^{q_n}}}},
\end{equation*}
because $\alpha \in \mathcal{L}_{\ast}$. By using $\rho^{\prime}_n$ and by applying Lemma \ref{lem:est} twice we get
\begin{align*}
& \sup_{(x_1,x_2) \in A^{\rho_{n}}} \abs{\psi_{2,n+1}\left( x_1 + \alpha_n + \psi_{1,n+1}(x_2) \right) - \psi_{2,n} \Big{(} x_1 + \alpha_{n+1} + \psi_{1,n+1}(x_2)\Big{)} } \\
\leq & \sup_{ y \in B^{\rho^{\prime}_n}} \abs{  \psi_{2,n+1}\left( y + \alpha_n\right) - \psi_{2,n+1} \left( y + \alpha_{n+1} \right) } \\
\leq & C \cdot \ee^{\ln\left( A_{2,n+1} \cdot l_n \cdot q_n\right) + 4 \cdot \ee^{A_{2,n+1} \cdot \ee^{2 \pi \cdot q_n \cdot \rho^{\prime}_n}}} \cdot \abs{\alpha_{n+1} - \alpha_n}
\end{align*}
Under our conditions on the numbers $\alpha_k$ its value is less than $1$. Hereby, we conclude using equation \ref{eq:l2}
\begin{align*}
& \|h^{-1}_{n+1} \circ \phi^{\alpha_{n+1}} \circ h_{n+1} - \phi^{\alpha_n}\|_{\rho_{n}} 
\leq  \abs{\alpha_{n+1}-\alpha_n} +\\
& \qquad\qquad C \cdot \ee^{\ln\left( A_{1,n+1} \cdot l_n\right) + 4 \cdot \ee^{A_{1,n+1} \cdot \ee^{2 \pi \cdot (\rho_{n}+1)}}} \cdot C \cdot \ee^{\ln\left( A_{2,n+1} \cdot l_n \cdot q_n\right) + 4 \cdot \ee^{A_{2,n+1} \cdot \ee^{2 \pi \cdot q_n \cdot \rho^{\prime}_n}}} \cdot \abs{\alpha_{n+1} - \alpha_n} \\
& \qquad\qquad\qquad \leq  2 C^2 \cdot 2^{2n+5} \cdot l^3_n \cdot \ee^{4 \cdot \ee^{2^{2n+5}l^3_n}} \cdot \ee^{\ln\left( 2^{2n+5} \cdot l^3_n \cdot q_n\right) + 4 \cdot \ee^{2^{2n+5} \cdot l^2_n \cdot \ee^{2 \pi \cdot q_n \cdot \rho^{\prime}_n}}} \cdot \abs{\alpha_{n+1} - \alpha_n}
\end{align*}
With the aid of condition \ref{eq:condq} we can continue the former estimates in the following way:
\begin{align*}
& 2^{n+1} \cdot \|DH_{n}\|_{\rho_{n}+1} \cdot \|h^{-1}_{n+1} \circ \phi^{\alpha_{n+1}} \circ h_{n+1} - \phi^{\alpha_n} \|_{\rho_{n}} \\
\leq & \ee^{2 \cdot \ln\left( 2^{2n+5} \cdot l^3_n \cdot q_n\right) + 4 \cdot \ee^{2^{2n+5} \cdot l^2_n \cdot \left(\ee^{2 \pi \cdot \rho^{\prime}_n}\right)^{q_n}}} \cdot \abs{\alpha_{n+1} - \alpha_n} \\
\leq & \ee^{\ee^{2^{2n+6} \cdot l^3_n \cdot \left(\ee^{2 \pi \cdot \rho^{\prime}_n}\right)^{q_n}}} \cdot \abs{\alpha_{n+1} - \alpha_n} \\
\leq & \ee^{\ee^{\left(2^{2n+6} \cdot l^3_n \cdot \ee^{2 \pi \cdot \rho^{\prime}_n} \right)^{q_n}}} \cdot \abs{\alpha_{n+1} - \alpha_n} \\
\end{align*}
Using the above estimates we conclude:
\begin{align*}
d_{\rho}\left(T_{n+1}, T_{n} \right) & \leq \| DH_{n}\|_{\rho_{n}+1} \cdot \|h^{-1}_{n+1} \circ \phi^{\alpha_{n+1}} \circ h_{n+1} - \phi^{\alpha_n}\|_{\rho_{n}} \\
& \leq \ee^{\ee^{\left(2^{2n+6} \cdot l^3_n \cdot \ee^{2 \pi \cdot \rho^{\prime}_n} \right)^{q_n}}} \cdot \frac{1}{2^{n+1}} \abs{\alpha_{n+1} - \alpha_n} \\
& \leq \ee^{\ee^{\left(2^{2n+6} \cdot l^3_n \cdot \ee^{2 \pi \cdot \rho^{\prime}_n} \right)^{q_n}}} \cdot \frac{1}{2^{n+1}} \cdot 2 \cdot \abs{\alpha-\alpha_n} \\
& < \frac{1}{2^n}.
\end{align*}
Hence, $\left(T_n\right)_{n \in \mathbb{N}}$ is a Cauchy sequence in Diff$^{\omega}_{\rho}\left( \mathbb{T}^2 \right)$. Since this is a complete space, we obtain convergence of $\left(T_n\right)_{n \in \mathbb{N}}$ to a real-analytic diffeomorphism $T \in \text{Diff}^{\omega}_{\rho}\left( \mathbb{T}^2 \right)$. 
\end{proof}

\subsection{Proof of conjugacy of \texorpdfstring{$T$}{TEXT} to the rotation \texorpdfstring{$R_{\alpha}$}{TEXT} of the circle}

This section is identical to section 6 of [Ba15] and we omit very detailed proofs which are available in that paper. We have a sequence of real-analytic diffeomorphisms $T_n$ converging to a real-analytic diffeomorphism $T$, a generating sequence of partitions $\mathcal{F}_{q_n}$  of $\T^2$ (see subsection \ref{subsection abc method}) and each $\mathcal{F}_{q_n}$ is cyclically permuted by $T_n$. We also know the convergence of $\left(\alpha_n\right)_{n \in \mathbb{N}}$ to the prescribed number $\alpha$.

On the other hand we approximate an irrational rotation of the circle by rational rotations. Let $\a_n=\frac{p_n}{q_n}$ be as in the approximation by conjugation scheme described above and consider a sequence of partitions of the circle as follows:
\begin{align*}
\mathcal{C}_{q_n}:=\Big\{\Gamma_{i,q_n}:=\Big[\frac{i}{q_n},\frac{i+1}{q_n}\Big):\; i=0,1,\ldots q_n-1\Big\}
\end{align*}
Clearly this is a sequence of partitions are monotonic and generating. We also, define a sequence of maps:
\begin{align*}
R_{\a_n}:S^1\to S^1,\quad\quad \text{ defined by }x\mapsto x+\a_n
\end{align*}
So, we have $R_{\a_n}\to R_\a$. We also define 
\begin{align*}
\tilde{E}_{{n+1}}:=\bigcup_{i=0}^{q_{n+1}}\Big[\frac{i}{l_n^2q_n}-\frac{\mu(E_{{n+1}})}{2l_n^2q_n},\frac{i}{l_n^2q_n}+\frac{\mu(E_{{n+1}})}{2l_n^2q_n}\Big] 
\end{align*}

Following the notation of lemma \ref{lemma mtl} we let $M^{(1)}:=\T^2,\mu^{(1)}:=\mu,\mathcal{P}^{(1)}_n:=\mathcal{F}_{q_n}, E^{(1)}_n:=E_{n}, M^{(2)}:=\T^1,\mu^{(2)}:=\l, \mathcal{P}^{(2)}_{n}:=\mathcal{C}_{q_n}$ and $E^{(2)}_n:=\tilde{E}_{{n+1}}$. Finally we define the conjugacy $K_n$ by $K_n(H_n^{-1}\Delta_{i,q_n})=\Gamma_{i,q_n}$. This gives us that $\mathcal{P}^{(i)}_n$ is generating and  conditions \ref{mtl 1}, \ref{mtl 2}, \ref{mtl 4} and \ref{mtl 5} in lemma \ref{lemma mtl}. Conditions \ref{mtl 7} and \ref{mtl 8} follows from the definition. Now note that $\phi^{\alpha_n}$ preserves $E_{{n+1}}^{(v)}$ and $E_{{n+1}}^{(d)}$ and hence $T_{n+1}$ preserves $E_{{n+1}}$. This gives us \ref{mtl 6} and completes the proof of Theorem \ref{nsr circle rotation estimated}.

\subsection{Set of numbers \texorpdfstring{$\mathcal{L}_{\ast}$}{TEXT}} \label{subsection:numbers}
As announced in the introduction we examine the set of obtained rotation numbers. By well known arguments for spaces like $\mathcal{L}_{\ast}$ (e.\,g. \cite{Br}, Appendix A.2) we prove
\begin{lemma}
$\mathcal{L}_{\ast}$ is a dense $G_{\delta}$-subset of $\mathbb{R}$.
\end{lemma}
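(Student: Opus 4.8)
The plan is to realize $\mathcal{L}_{\ast}$ directly as a countable intersection of dense open subsets of $\R$ and then quote the Baire category theorem. For each $k \in \N$ set
\begin{equation*}
U_k := \bigcup_{q \in \N} \ \bigcup_{\substack{p \in \Z \\ \gcd(p,q)=1}} \left( \frac{p}{q} - \ee^{-\ee^{k^q}},\ \frac{p}{q} + \ee^{-\ee^{k^q}} \right).
\end{equation*}
By the very definition of the set, $\alpha \in \mathcal{L}_{\ast}$ iff for every $k$ there is a coprime pair $(p,q)$ with $\abs{\alpha - p/q} < \ee^{-\ee^{k^q}}$, i.e. iff $\alpha \in U_k$ for every $k$. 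Hence $\mathcal{L}_{\ast} = \bigcap_{k \in \N} U_k$, and it suffices to prove that each $U_k$ is open and dense in $\R$.

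Openness is immediate: $U_k$ is a union of (nonempty, since $\ee^{-\ee^{k^q}}>0$) open intervals. For density, observe that any rational number written in lowest terms as $p/q$ lies in $U_k$, being the centre of one of the defining intervals; since $\Q$ is dense in $\R$, so is $U_k$. Thus $\mathcal{L}_{\ast}$ is a $G_\delta$-set by construction, and because $\R$ is a complete metric space, the Baire category theorem shows that the countable intersection $\bigcap_{k} U_k = \mathcal{L}_{\ast}$ is dense. This completes the argument.

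I do not expect a genuine obstacle here; the argument is the standard one for Liouville-type classes. The only point meriting a word of care is the bookkeeping with quantifiers — that the clause ``for every $k$'' in the definition corresponds to the \emph{intersection} over $k$ of the sets ``there exists $(p,q)$ with the approximation property'' — which is exactly what the displayed identity $\mathcal{L}_{\ast} = \bigcap_k U_k$ records, and the observation that each individual such set is precisely the open neighbourhood $U_k$ of $\Q$.
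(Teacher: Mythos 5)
Your argument is correct and is essentially the paper's proof: the same sets $U_k$, the same identity $\mathcal{L}_{\ast}=\bigcap_{k}U_k$, openness and density of each $U_k$ via the rationals, and the Baire category theorem. The only divergence is that the paper builds $U_k$ from the \emph{punctured} intervals $O_k(p,q)=\Meng{x\in\R}{0<\abs{x-\frac{p}{q}}<\ee^{-\ee^{k^q}}}$ rather than your full intervals, and obtains density from the fact that every rational lies in the \emph{closure} of $U_k$ rather than in $U_k$ itself. This puncture is what keeps $\Q$ out of $\bigcap_k U_k$: a fixed rational $\frac{p}{q}$ fails to lie in the punctured $U_k$ for $k$ large, since any other reduced fraction $\frac{p'}{q'}$ satisfies $\abs{\frac{p}{q}-\frac{p'}{q'}}\ge \frac{1}{qq'}$, and for $k$ large (depending on $q$) one has $qq'\le \ee^{\ee^{k^{q'}}}$ for every $q'$. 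That matters for the paper's intended reading of $\mathcal{L}_{\ast}$ as a subset of the non-Brjuno (in particular irrational) numbers, which is what Theorem \ref{nsr circle rotation estimated} needs, whereas with your unpunctured intervals every rational belongs to $\bigcap_k U_k$. Under the paper's literal definition of $\mathcal{L}_{\ast}$ (which does not demand $\alpha\neq \frac{p}{q}$) your identity is the accurate one, and in either reading the dense-$G_{\delta}$ conclusion of the lemma is unaffected.
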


\begin{proof}
For each pair $\left(p,q\right) \in \mathbb{Z} \times \mathbb{N}$ with $p,q$ relatively prime and every $k \in \mathbb{N}$ we define the following open set
\begin{equation*}
O_{k}(p,q) = \Meng{ x \in \R}{ 0 < \abs{x - \frac{p}{q}} < \frac{1}{\ee^{\ee^{k^q}}}}.
\end{equation*}
Then the countable union
\begin{equation*}
U_k = \bigcup_{\left(p,q\right) \in \mathbb{Z} \times \mathbb{N} \text{ with } p,q \text{ relatively prime}} O_{k}(p,q)
\end{equation*}
is also open in $\mathbb{R}$ for every $k \in \mathbb{N}$. In the next step, we fix $k \in \mathbb{N}$. Obviously, each rational number $\omega \in \mathbb{Q}$ written in its lowest form $\omega= \frac{p}{q}$ lies in the closure of $O_{k}(p,q)$. Hence, each rational number lies in the closure of $U_k$. Since the rational numbers are dense in $\mathbb{R}$, $U_k$ is dense in $\mathbb{R}$. This applies to all $k \in \mathbb{N}$. Moreover, we observe
\begin{equation*}
\mathcal{L}_{\ast} = \bigcap_{k \in \mathbb{N}} U_k.
\end{equation*}
By the Baire category Theorem, $\mathcal{L}_{\ast}$ as a countable intersection of open dense sets is dense in $\mathbb{R}$.
\end{proof}

\section{Non standard analytic realization of some ergodic translations of the torus}

Our goal in this section is to produce a proof of theorem \ref{theorem nsr total translations}. The proof of this theorem in the smooth category was done by Anosov and Katok in \cite{AK}. Later Benhenda in \cite{Mb-ts} produced an estimated version of this result showing that every ergodic toral translation with one arbitrary Liouvillian coordinate can be realized smoothly on any manifold admitting a circle action.

In our article we prove a real analytic version of this result. Unfortunately we do not have the techniques to prove results like theorem 1.1 and theorem 1.2 in \cite{AK} for arbitrary real-analytic manifolds. However the concept of block-slide type maps and their real-analytic approximations allow us just about enough flexibility on the torus to produce pretty much any combinatorial picture the approximation by conjugation scheme requires. So we prove that there exist real-analytic diffeomorphisms on $\T^d$ which are metrically conjugated to ergodic translations on $\T^h$ for arbitrary $d\geq 2$ and $h\geq 1$.

As of now we do not know if this concept of block-slide type maps can be successfully generalized to other types of real-analytic manifolds. It is possible that these maps can be generalized to odd dimensional spheres using transitive flows like those described in \cite{FK-ue}. The main obstruction to this generalizations seems to be the fact that the analytic flows they use commutes with the Hopf fibration but not with each other.

\subsection{Description of the required combinatorics} \label{constr transl}

Our objective here is to demonstrate that one can reproduce the approximation by conjugation scheme as described in \cite{AK} in its full generality in the real-analytic category on $\T^d$, $d \geq 2$. We show three basic kind of rearrangement techniques in this section. Together, these three kind of rearrangements will be sufficient to produce all constructions done in \cite{AK}.

\subsubsection*{Periodic interchange of two consecutive atoms}

\begin{figure}
\centering
\begin{tabular}{c c}
{\includegraphics[width=7cm, height=15cm]{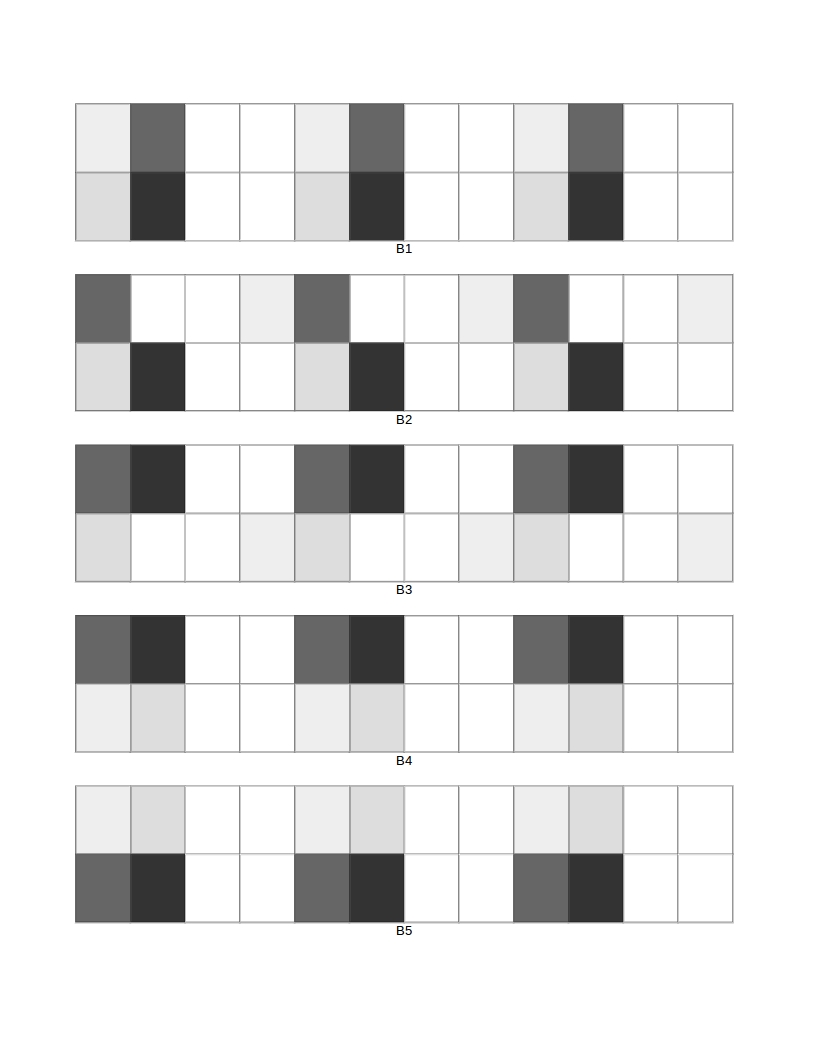}} & {\includegraphics[width=7cm, height=15cm]{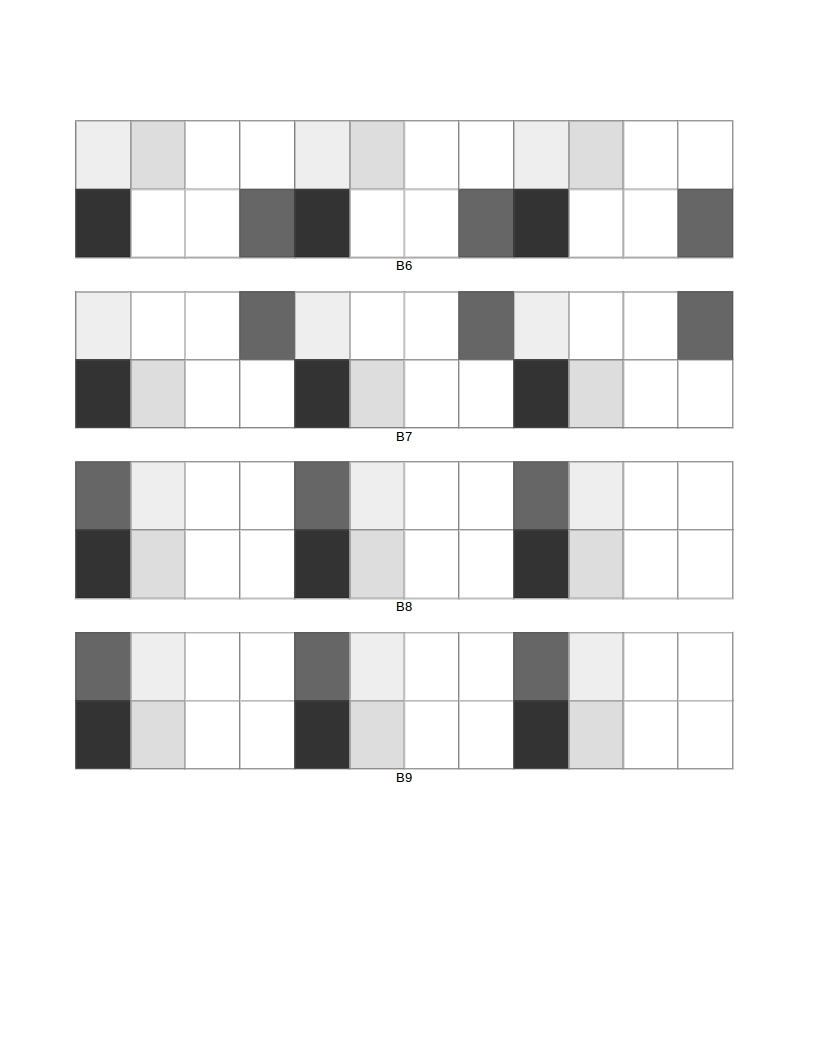}}
\end{tabular}
\caption{Illustration of the action of $\mathfrak{f}_{kq}$ on $\T^2$ with $k=4$ and $q=3$. We note how the blocks are moved by the intermediate maps: $
     B1\xrightarrow{\mathfrak{f}_{kq}^{(\mathfrak{1})}} B2\xrightarrow{\mathfrak{f}_{kq}^{(\mathfrak{2})}} B3\xrightarrow{\mathfrak{f}_{kq}^{(\mathfrak{3})}} B4\xrightarrow{\mathfrak{f}_{kq}^{(\mathfrak{4})}} B5\xrightarrow{\mathfrak{f}_{kq}^{(\mathfrak{5})}} B6\xrightarrow{\mathfrak{f}_{kq}^{(\mathfrak{6})}} B7\xrightarrow{\mathfrak{f}_{kq}^{(\mathfrak{7})}} B8\xrightarrow{\mathfrak{f}_{kq}^{(\mathfrak{8})}} B9  
$. Note that at the end the first two columns are interchanged. $\mathfrak{f}_{kq}$ acts as an identity on every other column. We also note that $B8$ and $B9$ appear to be identical but the unmarked rectangles are flipped.}
\label{figure interchange}
\end{figure}

Fix any two integers $k$ and $q$. Our objective here is to show that one can interchance two consecutive atoms of $\mathcal{T}_{kq}$ periodically inside each atom of $\mathcal{T}_q$. More precisely we show that there exist a block-slide type map $\mathfrak{f}_{k,q}$ that interchanges the atom $\Delta_{ik,kq}$ with the atom $\Delta_{ik+1,kq}$ for $i=0,\ldots,q-1$ and leaves all other atoms of $\mathcal{T}_{kq}$ unchanged. 

We begin by considering the following step functions (or more appropriately piecewise constant functions):
\begin{align}
& \s^{(\mathfrak{1})}_{kq}:(0,1]\to\R\qquad\qquad\text{defined by}\qquad \s^{(\mathfrak{1})}_{kq}(t)=\begin{cases} 0 &  \quad\text{if } t \in (0,1/2] \\ 1/(kq) & \quad\text{if } t\in (1/2,1]\end{cases} \\
& \s^{(\mathfrak{2})}_{kq}:(0,1]\to\R\qquad\qquad\text{defined by}\qquad \s^{(\mathfrak{2})}_{kq}(t)=\begin{cases} 1/(kq) &  \quad\text{if } t \in (0,1/2] \\ 0 & \quad\text{if } t \in (1/2,1]\end{cases}\\
& \s^{(\mathfrak{3})}_{kq}:(0,1]\to\R\qquad\qquad\text{defined by}\qquad \s^{(\mathfrak{3})}_{kq}(t)=\begin{cases} 0 & \quad\text{if } qt\mod 1 \in (0,\frac{1}{k}]\\ 1/2 & \quad\text{if } qt\mod 1 \in (\frac{1}{k},1]\end{cases}\\
& \s^{(\mathfrak{4})}_{kq}:(0,1]\to\R\qquad\qquad\text{defined by}\qquad \s^{(\mathfrak{4})}_{kq}(t)=\begin{cases} 0 & \quad\text{if } qt\mod 1 \in (0,\frac{2}{k}]\\ 1/2 & \quad\text{if } qt\mod 1 \in (\frac{2}{k},1]\end{cases} 
\end{align}
Note that $\s^{(\mathfrak{3})}_{kq}$ is $1/q$ periodic and we can define the following piecewise continuous functions on $\T^d$:
\begin{align}
& \mathfrak{f}_{kq}^{(\mathfrak{1})}:\T^d\to\T^d\qquad\qquad\text{defined by}\qquad\mathfrak{f}_{kq}^{(\mathfrak{1})}\big((x_1,\ldots,x_d)\big)=(x_1-\s_{kq}^{(\mathfrak{1})}(x_2),x_2,\ldots,x_d)\\
& \mathfrak{f}_{kq}^{(\mathfrak{2})}:\T^d\to\T^d\qquad\qquad\text{defined by}\qquad\mathfrak{f}_{kq}^{(\mathfrak{2})}\big((x_1,\ldots,x_d)\big)=(x_1,x_2+\s_{kq}^{(\mathfrak{3})}(x_1),\ldots,x_d)\\
& \mathfrak{f}_{kq}^{(\mathfrak{3})}:\T^d\to\T^d\qquad\qquad\text{defined by}\qquad\mathfrak{f}_{kq}^{(\mathfrak{3})}\big((x_1,\ldots,x_d)\big)=(x_1+\s_{kq}^{(\mathfrak{2})}(x_2),x_2,\ldots,x_d)\\
& \mathfrak{f}_{kq}^{(\mathfrak{4})}:\T^d\to\T^d\qquad\qquad\text{defined by}\qquad\mathfrak{f}_{kq}^{(\mathfrak{4})}\big((x_1,\ldots,x_d)\big)=(x_1,x_2+1/2,\ldots,x_d)\\
& \mathfrak{f}_{kq}^{(\mathfrak{5})}:\T^d\to\T^d\qquad\qquad\text{defined by}\qquad\mathfrak{f}_{kq}^{(\mathfrak{5})}\big((x_1,\ldots,x_d)\big)=(x_1-\s_{kq}^{(\mathfrak{2})}(x_2),x_2,\ldots,x_d)\\
& \mathfrak{f}_{kq}^{(\mathfrak{6})}:\T^d\to\T^d\qquad\qquad\text{defined by}\qquad\mathfrak{f}_{kq}^{(\mathfrak{6})}\big((x_1,\ldots,x_d)\big)=(x_1,x_2+\s_{kq}^{(\mathfrak{3})}(x_1),\ldots,x_d)\\
& \mathfrak{f}_{kq}^{(\mathfrak{7})}:\T^d\to\T^d\qquad\qquad\text{defined by}\qquad\mathfrak{f}_{kq}^{(\mathfrak{7})}\big((x_1,\ldots,x_d)\big)=(x_1-\s_{kq}^{(\mathfrak{1})}(x_2),x_2,\ldots,x_d)\\
& \mathfrak{f}_{kq}^{(\mathfrak{8})}:\T^d\to\T^d\qquad\qquad\text{defined by}\qquad\mathfrak{f}_{kq}^{(\mathfrak{8})}\big((x_1,\ldots,x_d)\big)=(x_1,x_2+\s_{kq}^{(\mathfrak{4})}(x_2),\ldots,x_d)
\end{align}
We compose all the functions above into the following function:
\begin{align}
\mathfrak{f}_{k,q}:\T^d\to\T^d\qquad\qquad\text{defined by}\qquad \mathfrak{f}_{k,q}:=\mathfrak{f}_{kq}^{(\mathfrak{8})}\circ\mathfrak{f}_{kq}^{(\mathfrak{7})}\circ\mathfrak{f}_{kq}^{(\mathfrak{6})}\circ\mathfrak{f}_{kq}^{(\mathfrak{5})}\circ\mathfrak{f}_{kq}^{(\mathfrak{4})}\circ\mathfrak{f}_{kq}^{(\mathfrak{3})}\circ\mathfrak{f}_{kq}^{(\mathfrak{2})}\circ\mathfrak{f}_{kq}^{(\mathfrak{1})}
\end{align}

\subsubsection*{Periodic rearrangement of atoms}

\begin{figure}
\centering
\begin{tabular}{c}
{\includegraphics[width=7.5cm, height=15cm]{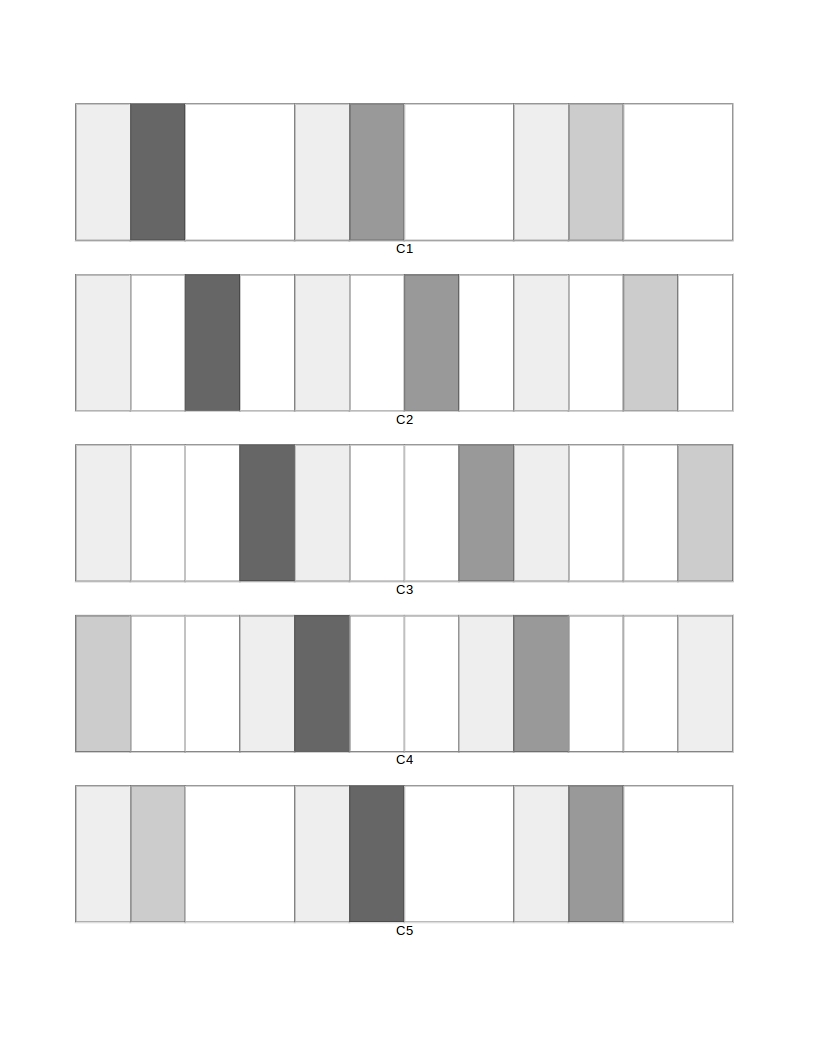}}
\end{tabular}
\caption{Illustration of the action of $\mathfrak{w}_{i,l,k,q}$ on $\T^2$ with $l=1$, $i=1$, $k=4$ and $q=3$. We note how the blocks are moved by the intermediate maps: $C1\xrightarrow{\mathfrak{w}_{1,4,3}} C2\xrightarrow{\mathfrak{w}_{2,4,3}} C3\xrightarrow{\mathfrak{w}_{3,4,3}} C4\xrightarrow{\phi^{1/(12)}} C5 $. Note that every other columns are kept constant at the end.
}
\label{figure walking}
\end{figure}

Now we show that for for any given $l$ and $i$ with $0\leq l< k$ and $0\leq i<q$, there exists a map of the block-slide kind which will allow us to rearrange the atoms of $\mathcal{T}_{k,q}$ so that for any $i$ with $0\leq i<q$, the atom $\Delta_{l+kj,kq}$ is moved to $\Delta_{l+k(j+i),kq}$ while any atom that is not of the form $\Delta_{l+kj',kq}$ is left invariant.

Now we describe this map. Consider the following block-slide type map,
\begin{align}
& \mathfrak{w}_{l,k,q}:\T^d\to\T^d\qquad\qquad\text{defined by}\qquad\mathfrak{w}_{l,k,q}=\phi^{l/(kq)}\circ\mathfrak{f}_{k,q}\circ\phi^{-l/(kq)}
\end{align}
and consider the following composition,
\begin{align}
& \mathfrak{w}_{i,l,k,q}:\T^d\to\T^d\qquad\qquad\text{defined by}\qquad\mathfrak{w}_{i,l,k,q}=\phi^{1/(kq)}\circ\mathfrak{w}_{l+ki-2,k,q}\circ\ldots\mathfrak{w}_{l+1,k,q}\circ\mathfrak{w}_{l,k,q}
\end{align}
Note that the above is the map we desired at the begining.

\subsubsection*{Recovering  $\mathcal{T}_{l^dq}$ from a generating partition}

We show that given any two integers $l$ and $q$, there exists a block-slide type map which allows us to break down the partition $\mathcal{T}_{l^dq}$ and reform it into a partition $\mathcal{G}_{l,q}$ whose atoms have diamter less than $d/l$.

We consider the following three types of block slide map:
\begin{align*}
& \mathfrak{g}_{i,l,q}^{(\mathfrak{1})}:\T^d\to\T^d\qquad\text{defined by}\qquad\mathfrak{g}_{i,l,q}^{(\mathfrak{1})}\big((x_1,\ldots,x_d)\big)=(x_1+\psi_{l,q}^{(\mathfrak{1})}(x_i),x_2,\ldots,x_d)\\
& \mathfrak{g}_{i,l,q}^{(\mathfrak{2})}:\T^d\to\T^d\qquad\text{defined by}\qquad\mathfrak{g}_{i,l,q}^{(\mathfrak{2})}\big((x_1,\ldots,x_d)\big)=(x_1,\ldots,x_{i-1},x_i+\psi_{l,q}^{(\mathfrak{2})}(x_1),x_{ i+1},\ldots,x_d)\\
& \mathfrak{g}_{i,l,q}^{(\mathfrak{3})}:\T^d\to\T^d\qquad\text{defined by}\qquad\mathfrak{g}_{i,l,q}^{(\mathfrak{3})}\big((x_1,\ldots,x_d)\big)=(x_1-\psi_{l,q}^{(\mathfrak{3})}(x_i),x_2,\ldots,x_d)
\end{align*}
with the maps $\psi_{l,q}^{(\mathfrak{i})}$ defined as above. Note that the composition 
\begin{align}
\mathfrak{g}_{i,l,q}:\T^d\to\T^d\qquad\qquad\text{defined by}\qquad\mathfrak{g}_{i,l,q}=\mathfrak{g}_{i,l,q}^{(\mathfrak{3})}\circ\mathfrak{g}_{i,l,q}^{(\mathfrak{2})}\circ\mathfrak{g}_{i,l,q}^{(\mathfrak{1})}
\end{align}
maps the partition $\mathcal{G}_{j,l,q}$ to $\mathcal{G}_{j+1,l,q}$, where 
\begin{align}
& \mathcal{G}_{j,l,q}:=\Big\{\big[\frac{i_1}{l^{j}q},\frac{i_1+1}{l^{j}q}\big)\times\big[\frac{i_2}{l},\frac{i_2+1}{l}\big)\times\ldots\times\big[\frac{i_{d-j+1}}{l},\frac{i_{d-j+1}+1}{l}\big)\times\T^{j-1}:i_1 = 0,1,\ldots,lq-1,\nonumber\\
&\qquad\qquad\qquad\qquad\qquad\qquad\qquad\qquad\qquad\qquad (i_2,\ldots,i_{d-j+1}) \in \{0,1,\ldots,l-1\}^{d-j}\Big\}
\end{align}
So the composition
\begin{align}
\mathfrak{g}_{l,q}:\T^d\to\T^d\qquad\qquad\text{defined by}\qquad\mathfrak{g}_{l,q}=\mathfrak{g}_{d-1,l,q}\circ\ldots\circ\mathfrak{g}_{1,l,q}
\end{align}
maps the partition $\mathcal{G}_{l,q}$ to $\mathcal{T}_{l^dq}=\mathcal{G}_{l^dq}$.

\subsubsection*{Piecing everything together}

Our objective now is to demonstrate that there is a $\frac{1}{q}$-periodic block-slide type of map which maps the partition $\mathcal{G}_{l,q}$ to $\mathcal{R}_{a,k,q}$. Such a map is obtained after taking a composition of some of the maps defined above:

Consider the following composition:
\begin{align}
\mathfrak{h}_{a,k,q}^{(\mathfrak{1})}:\T^d\to\T^d\qquad\qquad\text{defined by}\qquad\mathfrak{h}_{a,k,q}^{(\mathfrak{1})}=\mathfrak{w}_{a(k-1),k-1,k,q}\circ\ldots\circ\mathfrak{w}_{a(1),1,k,q}\circ\mathfrak{w}_{a(0),0,k,q}
\end{align}
and note that the above map maps the partition $\mathcal{R}_{a,k,q}$ to the decomposition $\mathcal{T}_{q}$. Next we define:
\begin{align}
\mathfrak{h}_{a,k,q}^{(\mathfrak{2})}:\T^d\to\T^d\qquad\qquad\text{defined by}\qquad\mathfrak{h}_{l,q}^{(\mathfrak{2})}=\mathfrak{g}_{l,q}
\end{align}
and note that the above map maps the partition $\mathcal{G}_{l,q}$ to $\mathcal{T}_{l^dq}$. So the composition:
\begin{align}
\mathfrak{h}_{a,k,l,q}:\T^d\to\T^d\qquad\qquad\text{defined by}\qquad\mathfrak{h}_{a,k,l,q}=(\mathfrak{h}_{a,k,q}^{(\mathfrak{1})})^{-1}\circ\mathfrak{h}_{kl,q}^{(\mathfrak{2})}
\end{align}
and note that the above map satisfies the following properties:
\begin{enumerate}
\item $\mathfrak{h}_{a,k,l,q}^{-1}(\mathcal{R}_{a,k,q})=\mathcal{T}_q$.
\item $\mathfrak{h}_{a,k,l,q}^{-1}(\mathcal{T}_{l^dk^dq})=\mathcal{G}_{lk,q}$.
\item  { $\mathfrak{h}_{a,k,l,q}\circ\phi^{\a}=\phi^{\a}\circ\mathfrak{h}_{a,k,l,q}$} for any $p$ and $\a=p/q$.
\end{enumerate}

\subsection{Periodic approximation of ergodic translations of the torus}

This entire section is identical to section 6 in \cite{AK}. So we only recall the portions that we need. For exact proofs, one may refer to the original article.

\begin{lemma} \label{8.90}
There exists sequences $\a_n=(\a_n^{(1)},\ldots,\a_n^{(h)})$ and $\gamma_n=(\gamma_n^{(1)},\ldots,\gamma^{(h)}_n)$ satisfying the following properties:
\begin{enumerate}
\item $\gcd(\gamma_n^{(1)},\ldots,\gamma_n^{(h)})=1$
\item There exists integers $p_n,q_n$ such that $\gcd(p_n,q_n)=1$ and $\a_n=(p_n/q_n)\gamma_n$.
\item There exists integers $r_n$ such that $q_n=r_n\gamma_{n-1}^{(h)}$.
\item There exists integers $s_n$ such that $\gamma_{n+1}^{(h)}=s_n\gamma_{n}^{(h)}$.
\item $\gamma_{n+1}^{(i)}\equiv \gamma^{(i)}_n\mod q_{n}$, for  $i=1,\ldots, h$.
\item There exists integers $m_n$ such that 
\begin{align}
\frac{p_{n+1}}{q_{n+1}}=\frac{p_n}{q_n}+\frac{1}{m_ns_nq_n^2}
\end{align}
\item Let $\Gamma_n'\subset \T^{h-1}\times\{0\}\subset \T^h$ be a fundamental domain of the flow $T^{t \gamma_n}$. Let $d_n:=\text{diam}(\Gamma_n),\;$ $\sigma_{n}=\mu_{h-1}(\partial(\Gamma_n))$. Then $d_{n+1}<1/(2^{n}\gamma_{n}^{(h)}\sigma_n)$.
\item \begin{align}
\Big|\frac{\gamma_{n+1}}{\gamma_{n+1}^{(h)}}-\frac{\gamma_{n}}{\gamma_{n}^{(h)}}\Big|<\frac{1}{2^n\sigma_nq_n}
\end{align}
\end{enumerate}
\end{lemma}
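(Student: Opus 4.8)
This is the number-theoretic core of the Anosov--Katok realisation of toral translations (and the lemma is indeed ``identical to section 6 in \cite{AK}''), so the plan is to build the two sequences by a single recursion, making the choices at step $n+1$ in a fixed order so that the eight requirements fall into three groups: the \emph{inequalities} (7) and (8), which will be forced by taking one integer parameter large enough; the \emph{exact} identities (3), (4), (6) together with the $i=h$ case of (5), which will hold by the very definition of the new data; and the \emph{congruence/coprimality} conditions (1), (2) and the $i<h$ cases of (5), which cost only an elementary prime-by-prime argument. For $h=1$ the lemma is degenerate: (1) forces $\gamma_n\equiv 1$, hence $s_n\equiv 1$, and what remains is to produce a rational sequence with $p_{n+1}/q_{n+1}=p_n/q_n+1/(m_nq_n^2)$, which is immediate; so from now on assume $h\geq 2$.

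\textbf{Base case and the first choices.} Set $\gamma_0^{(h)}:=1$ by convention, fix a primitive vector $\gamma_1\in\Z^h$ with $\gamma_1^{(h)}>0$, and take $p_1/q_1$ in lowest terms, so that (3) is vacuous for $n=1$. Assume $\gamma_n,p_n,q_n$ have been constructed with the conditions holding up to index $n$; then $\sigma_n$ and $d_n$ are already determined. By Dirichlet's theorem choose a prime $s_n\equiv 1\pmod{q_n}$ with $s_n>\gamma_n^{(h)}$, \emph{large} in a sense pinned down at the end of the step. Put $\gamma_{n+1}^{(h)}:=s_n\gamma_n^{(h)}$; this is (4), and since $s_n\equiv1\pmod{q_n}$ it is also the $i=h$ case of (5).

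\textbf{The remaining coordinates and coprimality.} For $i=1,\dots,h-1$ write $\gamma_{n+1}^{(i)}:=s_n\gamma_n^{(i)}+c_i q_n$ with integers $c_i$ to be chosen; because $s_n\equiv1\pmod{q_n}$, any choice yields the $i<h$ cases of (5). Condition (1) is secured one prime at a time: let $\ell\mid\gamma_{n+1}^{(h)}=s_n\gamma_n^{(h)}$. If $\ell\mid q_n$ then $\ell\nmid s_n$, so $\ell\mid\gamma_n^{(h)}$, and picking $j<h$ with $\ell\nmid\gamma_n^{(j)}$ (possible since $\gcd(\gamma_n^{(1)},\dots,\gamma_n^{(h)})=1$) gives $\gamma_{n+1}^{(j)}\equiv s_n\gamma_n^{(j)}\not\equiv 0\pmod\ell$ regardless of the $c_i$. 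If $\ell=s_n$, then $\gamma_{n+1}^{(1)}\equiv c_1q_n\pmod{s_n}$, so $c_1=1$ forces $\ell\nmid\gamma_{n+1}^{(1)}$. If $\ell\mid\gamma_n^{(h)}$ with $\ell\neq s_n$ and $\ell\nmid q_n$, then again some $j<h$ has $\ell\nmid\gamma_n^{(j)}$, and with $c_j=0$ one has $\gamma_{n+1}^{(j)}=s_n\gamma_n^{(j)}$ with $\ell\nmid s_n$ because $\ell\leq\gamma_n^{(h)}<s_n$ and $s_n$ is prime. Hence (1) holds as soon as $c_1=1$, whatever the remaining $c_i$ are. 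We may therefore invoke the distribution argument of \cite[section 6]{AK}: the $c_i$ can be chosen with $|c_i|$ of order at most $\sqrt{s_n}$ so that the cyclic orbit $\bigl\{\,j\bigl(\gamma_{n+1}^{(1)},\dots,\gamma_{n+1}^{(h-1)}\bigr)\bmod\gamma_{n+1}^{(h)}:0\leq j<\gamma_{n+1}^{(h)}\bigr\}$ is equidistributed enough in $(\R/\Z)^{h-1}$ that the corresponding fundamental domain $\Gamma_{n+1}'\subset\T^{h-1}\times\{0\}$ of the flow $T^{t\gamma_{n+1}}$ has $d_{n+1}\leq C_h\,(\gamma_{n+1}^{(h)})^{-1/(h-1)}$, with $C_h$ depending only on $h$ (for $h=2$ this is automatic from (1), since then the orbit is all of $\Z/\gamma_{n+1}^{(h)}$).

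\textbf{Closing the estimates and finishing the step.} With the $c_i$ bounded as above, the left side of (8) is $\max_i|c_i|q_n/(s_n\gamma_n^{(h)})\leq q_n/(\sqrt{s_n}\,\gamma_n^{(h)})$, while in (7) one has $d_{n+1}\leq C_h(s_n\gamma_n^{(h)})^{-1/(h-1)}$; both are made smaller than $1/(2^n\sigma_n q_n)$, resp. $1/(2^n\gamma_n^{(h)}\sigma_n)$, by choosing $s_n$ large, which is legitimate because these lower bounds involve only data fixed before $s_n$ was selected. Finally let $m_n$ be any sufficiently large multiple of $\gamma_n^{(h)}$ and set
\[
q_{n+1}:=m_ns_nq_n^2,\qquad p_{n+1}:=p_n m_n s_n q_n+1 .
\]
Then (6) holds by construction; (3) holds with $r_{n+1}=q_{n+1}/\gamma_n^{(h)}=(m_n/\gamma_n^{(h)})s_nq_n^2\in\Z$; and (2) holds because every prime dividing $q_{n+1}$ divides $m_ns_nq_n=p_{n+1}-1$, so $\gcd(p_{n+1},q_{n+1})=1$. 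This closes the recursion. \emph{The one genuinely delicate point} is the simultaneous control in the third paragraph: choosing the lower coordinates of $\gamma_{n+1}$ inside the prescribed residue class modulo $q_n$, keeping the direction $O(q_n/\sqrt{s_n})$-close to $\gamma_n/\gamma_n^{(h)}$, \emph{and} making the fundamental domain small --- that is, a Diophantine equidistribution statement with prescribed residues, which is exactly the ingredient borrowed from \cite[section 6]{AK}. Everything else reduces to ``choose one parameter large enough''.
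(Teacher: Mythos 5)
The paper itself gives no proof of Lemma \ref{8.90}: the surrounding text says the whole section is identical to Section 6 of \cite{AK} and refers there for the exact arguments. Your reconstruction follows that same recursion, and the bookkeeping layer is correct: with $s_n\equiv 1\pmod{q_n}$ prime, $\gamma_{n+1}^{(h)}=s_n\gamma_n^{(h)}$, $\gamma_{n+1}^{(i)}=s_n\gamma_n^{(i)}+c_iq_n$, $q_{n+1}=m_ns_nq_n^2$ with $\gamma_n^{(h)}\mid m_n$, and $p_{n+1}=p_nm_ns_nq_n+1$, the conditions (2)--(6) are exact identities, and (8) follows from $\abs{c_i}=O(\sqrt{s_n})$ once $s_n$ is taken large, exactly as you say.

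Two points keep this from being a complete standalone proof. First, your argument for condition (1) is internally inconsistent: you conclude that (1) holds ``as soon as $c_1=1$, whatever the remaining $c_i$ are,'' but the case $\ell\mid\gamma_n^{(h)}$, $\ell\nmid q_n$, $\ell\neq s_n$ was disposed of by setting $c_j=0$ for a particular $j$, which may clash with $c_1=1$ and in any case is a choice, not a consequence. For such a prime $\ell$ the residues $c_i\equiv -s_n\gamma_n^{(i)}q_n^{-1}\pmod{\ell}$ for all $i<h$ are perfectly possible, and then $\ell$ divides every coordinate of $\gamma_{n+1}$; so (1) is a genuine (if thin, CRT-repairable) constraint on the $c_i$, and the repair must be made jointly with the size and equidistribution constraints you impose afterwards. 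Second, and more substantially, the only nontrivial assertion of the lemma --- that the $c_i$ can be chosen in the prescribed residue classes, of controlled size, so that the return-map orbit of $T^{t\gamma_{n+1}}$ on $\T^{h-1}\times\{0\}$ admits a fundamental domain of diameter at most $C_h\,(\gamma_{n+1}^{(h)})^{-1/(h-1)}$, which is what makes (7) achievable --- is imported as a black box from \cite{AK}. Note that for $h\geq 3$ condition (1) by itself gives nothing here: for $\gamma=(1,1,M)$ the section orbit lies on a line in $\T^2$ and every fundamental domain has diameter bounded below by an absolute constant, so this step is precisely where the work lies. Since the paper also defers to \cite[Section 6]{AK}, your proposal is consistent with the paper's treatment, but as a proof it has the concrete error in the coprimality step and leaves the lemma's real content (the small-fundamental-domain estimate under the congruence constraints) unproved.
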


With $\a_n,p_n,q_n,\gamma_n,r_n,s_n,m_n$ and $\Gamma_n'$ as in lemma \ref{8.90}, we construct the following two sequences of partitions of $\T^{h-1}\times\{0\}\subset\T^h$:
\begin{align}
&\tilde{\mathcal{F}}_{q_n}':=\big\{\Gamma_{i,q_n}':\Gamma_{i,q_n}':=T^{i\gamma_n/\gamma_n^{(h)}}\Gamma_n', i=0,\ldots, q_n-1\big\}  \\
&\tilde{\mathcal{F}}_{q_n,q_{n+1}}':=\big\{\Gamma_{i,q_n,q_{n+1}}':\Gamma_{i,q_n,q_{n+1}}':=T^{i\gamma_n/\gamma_n^{(h)}}(\cup\{\Gamma_{j,q_{n+1}}':T^{j\gamma_{n+1}/\gamma_{n+1}^{(h)}}(0)\in\Gamma_n', \nonumber\\
& \hspace{200pt}  j=0,\ldots, q_{n+1}-1, i=0,\ldots, q_n-1\})\big\}
\end{align}
Note that $\mathcal{F}'_{q_n}>\mathcal{F}'_{q_n,q_{n+1}}$. and they are both generating sequence of partitions. We construct the following two sequence of partitions of $\T^h$ from the above two partitions:
\begin{align}
&\tilde{\mathcal{F}}_{q_n}:=\big\{\Gamma_{i,q_n}:\Gamma_{i,q_n}:=T^{i\gamma_n/q_n}(\cup\{T^{t\gamma_n}\Gamma_n':0\leq t<1/q_n\}),\; i=0,\ldots, q_n-1\big\}\\
& \tilde{\mathcal{F}}_{q_n,q_{n+1}}:=\big\{\Gamma_{i,q_n,q_{n+1}}:\Gamma_{i,q_n,q_{n+1}}:=T^{i\gamma_n/q_n}(\cup\{T^{t\gamma_{n+1}}\Gamma_{0,q_n,q_{n+1}}':0\leq t<1/(r_n\gamma_{n+1}^{(h)})\}),\nonumber\\
&\hspace{280pt} i=0,\ldots, q_n-1\big\}
\end{align}

The following proposition summarizes certain properties of the above partitions. For a proof one can refer to page 28-29 of \cite{AK}.
\begin{proposition}\label{proposition cyclic approximation of translations}
With $\a_n,p_n,q_n,\gamma_n,r_n,s_n,m_n$ as in lemma \ref{8.90} we can conclude the following:
\begin{enumerate}
\item The sequence of partitions $\tilde{\mathcal{F}}_{q_n,q_{n+1}}$ and $\tilde{\mathcal{F}}_{q_n}$ are respectively preserved and permuted by $T^{\a_n}$.
\item $\mu_h(\Gamma_{i,q_n,q_{n+1}} \triangle \Gamma_{i,q_n})<1/(2^{n-3}q_n)$ \footnote{Here $\mu_h$ denotes the standard Lebesgue measure on $\T^h$ }  for any $\Gamma_{i,q_n,q_{n+1}}\in \tilde{\mathcal{F}}_{q_n,q_{n+1}}$ and $\Gamma_{i,q_n}\in \tilde{\mathcal{F}}_{q_n}$ with the same $i$. 
\item The sequence of periodic translations $T^{\a_n}:\T^h\to\T^h$ converges to an ergodic translation $T^{\a}:\T^h\to\T^h$.
\end{enumerate}
\end{proposition}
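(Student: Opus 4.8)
The plan is to verify the three claims by unwinding the definitions of the partitions $\tilde{\mathcal{F}}_{q_n}$ and $\tilde{\mathcal{F}}_{q_n,q_{n+1}}$ and feeding in the arithmetic relations of Lemma~\ref{8.90}; this is the toral-translation analogue of the computation on pages~28--29 of \cite{AK}, which I would reproduce in our notation. Write $B_n := \bigcup\{T^{t\gamma_n}\Gamma_n' : 0\le t<1/q_n\}$, so that $\Gamma_{i,q_n}=T^{i\gamma_n/q_n}B_n$. Because $\Gamma_n'$ is a fundamental domain of the flow $t\mapsto T^{t\gamma_n}$, the set $B_n$ is invariant mod $0$ under $T^{\gamma_n}$ and the $q_n$ translates $T^{i\gamma_n/q_n}B_n$, $0\le i<q_n$, tile $\T^h$; hence, since $\a_n=(p_n/q_n)\gamma_n$, we get $T^{\a_n}\Gamma_{i,q_n}=T^{(i+p_n)\gamma_n/q_n}B_n=\Gamma_{(i+p_n)\bmod q_n,\,q_n}$, and $\gcd(p_n,q_n)=1$ makes this a cyclic permutation of the $q_n$ atoms. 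The same mechanism handles $\tilde{\mathcal{F}}_{q_n,q_{n+1}}$ once one checks that the refined base $\Gamma_{0,q_n,q_{n+1}}'$, flowed along $\gamma_{n+1}$ for the prescribed time, again produces a set tiling under the $T^{i\gamma_n/q_n}$ and fixed by $T^{\gamma_n}$; this is exactly where the divisibility and congruence relations $q_n=r_n\gamma_{n-1}^{(h)}$, $\gamma_{n+1}^{(h)}=s_n\gamma_n^{(h)}$ and $\gamma_{n+1}^{(i)}\equiv\gamma_n^{(i)}\bmod q_n$ (items~3--5 of Lemma~\ref{8.90}) are used.

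For the measure estimate, since every $T^{i\gamma_n/q_n}$ preserves $\mu_h$ it suffices to bound $\mu_h(\Gamma_{0,q_n,q_{n+1}}\triangle\Gamma_{0,q_n})$. Both atoms are flow boxes over a base sitting in $\T^{h-1}\times\{0\}$, and I would split their symmetric difference into two effects. First, $\Gamma_{0,q_n,q_{n+1}}'$ is a union of $\Gamma_{n+1}'$-translates packed into $\Gamma_n'$, so it differs from $\Gamma_n'$ only inside a tube of width $\le d_{n+1}$ around $\partial\Gamma_n'$, of $(h-1)$-measure $\lesssim \sigma_n d_{n+1}$; flowing this tube contributes $\lesssim \gamma_n^{(h)}q_n^{-1}\sigma_n d_{n+1}$, which by item~7 of Lemma~\ref{8.90} is $\le 1/(2^n q_n)$. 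Second, replacing the flow direction $\gamma_n/\gamma_n^{(h)}$ by $\gamma_{n+1}/\gamma_{n+1}^{(h)}$ shears each horizontal slice of the box by at most (flow length)$\,\cdot\,\bigl|\gamma_{n+1}/\gamma_{n+1}^{(h)}-\gamma_n/\gamma_n^{(h)}\bigr|$, so a co-area estimate bounds this contribution by a constant times $\sigma_n\,q_n^{-1}\,\bigl|\gamma_{n+1}/\gamma_{n+1}^{(h)}-\gamma_n/\gamma_n^{(h)}\bigr|$, which by item~8 is again $\lesssim 1/(2^nq_n)$. Adding the two bounds and absorbing the absolute constants into the factor $2^3$ gives $\mu_h(\Gamma_{0,q_n,q_{n+1}}\triangle\Gamma_{0,q_n})<1/(2^{n-3}q_n)$, and translating by $T^{i\gamma_n/q_n}$ spreads this to every $i$. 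The parameters in Lemma~\ref{8.90} are rigged precisely so that this arithmetic closes.

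For the last claim, $\a_n$ converges modulo $\Z^h$ to some $\a$: by item~6 the scalars $p_n/q_n$ have summable increments $1/(m_ns_nq_n^2)$, by item~8 the normalized directions $\gamma_n/\gamma_n^{(h)}$ form a Cauchy sequence, and items~4--5 guarantee that the coordinates of $\a_n=(p_n/q_n)\gamma_n$ stabilize modulo $1$; hence $T^{\a_n}\to T^{\a}$ uniformly. It remains to see that $T^{\a}$ is ergodic, i.e.\ that $1,\a^{(1)},\dots,\a^{(h)}$ are rationally independent; since item~6 leaves the integer $m_n$ free at each stage, this is arranged by the usual diagonal argument of \cite[\S6]{AK}: at step $n$ all but countably many choices of $m_n$ keep $\a$ (and hence its limit) off a prescribed finite list of candidate linear relations, so a suitable choice along the construction yields an $\a$ meeting none of them.

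The main obstacle is the measure estimate of step~(2): one must check that the combined geometric distortion coming from changing the fundamental domain $\Gamma_n'$ to $\Gamma_{0,q_n,q_{n+1}}'$ and from tilting the flow direction $\gamma_n$ to $\gamma_{n+1}$ is controlled by exactly the quantities $d_{n+1}$ and $\bigl|\gamma_{n+1}/\gamma_{n+1}^{(h)}-\gamma_n/\gamma_n^{(h)}\bigr|$ of items~7--8, with the right dependence on $\sigma_n$ and $q_n$. Verifying the permutation property for the refined partition, which forces one to invoke all the divisibility and congruence relations of Lemma~\ref{8.90} at once, is a secondary technical point.
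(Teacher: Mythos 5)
The paper never writes out a proof of this proposition: it is stated with a pointer to pages 28--29 of \cite{AK} (``This entire section is identical to section 6 in \cite{AK}\dots For exact proofs, one may refer to the original article''), so there is no in-text argument to compare with line by line. Your sketch reconstructs precisely the cited route for items (1) and (2): the cyclic permutation is read off from $\alpha_n=(p_n/q_n)\gamma_n$, $\gcd(p_n,q_n)=1$ and the tiling of $\T^h$ by the flow boxes over the fundamental domain (with the divisibility/congruence relations of Lemma \ref{8.90} entering only for the refined partition), and the symmetric-difference bound is split into the boundary-tube term controlled by item 7 and the direction-change (shear) term controlled by item 8, with the factor $2^3$ absorbing constants. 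That is the intended argument and your bookkeeping is the right one.

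The one point where your proposal proves slightly less than what is asserted is ergodicity in item (3). The proposition claims that for sequences with the properties of Lemma \ref{8.90} the limit $T^{\alpha}$ \emph{is} ergodic; you instead argue that ergodicity can be \emph{arranged} by choosing $m_n$ generically (a diagonal argument avoiding countably many rational relations). Besides proving an existence statement rather than the stated implication, this creates a compatibility issue with the application in the next subsection, where $m_n$ is not free but forced to equal $\gamma_n^{(h)}l_n$ with $l_n$ subject to the AbC largeness requirements; this can be reconciled (at each stage only finitely many relations must be avoided and $l_n$ is only bounded from below), but you would need to say so explicitly. The cleaner route, consistent with the literal statement, is to derive rational independence from the listed properties themselves: if $k_0+\langle k,\alpha\rangle=0$ for a nonzero integer vector, then since $k_0+\langle k,\alpha_n\rangle$ is a rational with denominator $q_n$ while item 6 makes $\left|\alpha-\alpha_n\right|$ much smaller than $1/(|k|q_n)$ for large $n$, one gets $k_0q_n+p_n\langle k,\gamma_n\rangle=0$ eventually, hence $k_0=0$ and $\langle k,\gamma_n\rangle=0$ for all large $n$; a flow direction confined to a fixed rational hyperplane keeps the diameters of the fundamental domains bounded below, which is incompatible with item 7. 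With that adjustment (or with the diagonal argument relocated to the proof of Lemma \ref{8.90}, where the sequences are actually constructed), your write-up matches the argument the paper cites.
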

Next proposition is identical to lemma 6.2 in \cite{AK}.
\begin{proposition} \label{proposition monotonic generating cyclic partition 2}
Under the same hypothesis as proposition \ref{proposition cyclic approximation of translations} we can find a sequence of partitions $\tilde{\mathcal{M}}_n$ of $\T^h$ satisfying the following three properties:
\begin{enumerate}
\item Monotonicity condition: $\tilde{\mathcal{M}}_{n+1}>\tilde{\mathcal{M}}_n$
\item Cyclic permutaion: The diffeomorphims $T^{\a_n}$ cyclically permutes the atoms of $\tilde{\mathcal{M}}_n$.
\item Generating condition: $\tilde{\mathcal{M}}_n\to\e$ as $n\to\infty$. 
\end{enumerate}
\end{proposition}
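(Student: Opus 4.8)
The plan is to transcribe the proof of Proposition~\ref{proposition monotonic generating cyclic partition} (itself a copy of Proposition~3.1 of~\cite{AK}, and the present statement is identical to Lemma~6.2 of~\cite{AK}), replacing the AbC partitions $\mathcal{F}_{q_n}$, $\mathcal{F}_{q_m,q_n}$ by the partitions $\tilde{\mathcal{F}}_{q_n}$ and $\tilde{\mathcal{F}}_{q_n,q_{n+1}}$ of $\T^h$ supplied by Lemma~\ref{8.90} and Proposition~\ref{proposition cyclic approximation of translations}. First I would set up, for each $n$, two measurable maps between quotient spaces: $\tilde{\mathfrak{c}}^{(\mathfrak{1})}_{n+1,n}:\T^h/\tilde{\mathcal{F}}_{q_n}\to\T^h/\tilde{\mathcal{F}}_{q_n,q_{n+1}}$ defined by $\Gamma_{i,q_n}\mapsto \Gamma_{i,q_n,q_{n+1}}$ (matching the index $i$; both partitions have $q_n$ atoms), and $\tilde{\mathfrak{c}}^{(\mathfrak{2})}_{n+1,n}:\T^h/\tilde{\mathcal{F}}_{q_{n+1}}\to\T^h/\tilde{\mathcal{F}}_{q_n,q_{n+1}}$ sending each atom of $\tilde{\mathcal{F}}_{q_{n+1}}$ to the atom of $\tilde{\mathcal{F}}_{q_n,q_{n+1}}$ that contains it; the second map is well defined since, directly from the definition, every atom $\Gamma_{i,q_n,q_{n+1}}$ is a union of atoms of $\tilde{\mathcal{F}}_{q_{n+1}}$, i.e. $\tilde{\mathcal{F}}_{q_{n+1}}>\tilde{\mathcal{F}}_{q_n,q_{n+1}}$. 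Composing, $\tilde{\mathfrak{c}}_{n+1,n}:=(\tilde{\mathfrak{c}}^{(\mathfrak{2})}_{n+1,n})^{-1}\circ\tilde{\mathfrak{c}}^{(\mathfrak{1})}_{n+1,n}$ assigns to each atom of $\tilde{\mathcal{F}}_{q_n}$ a union of atoms of $\tilde{\mathcal{F}}_{q_{n+1}}$ (which as a subset of $\T^h$ equals $\Gamma_{i,q_n,q_{n+1}}$), and iterating produces, for all $m>n$, a partition $\tilde{\mathcal{F}}_{q_m,q_n}:=\{\tilde{\mathfrak{c}}_{m,m-1}\circ\cdots\circ\tilde{\mathfrak{c}}_{n+1,n}(\Gamma_{i,q_n}): 0\le i<q_n\}$ satisfying the cocycle identity $\tilde{\mathfrak{c}}_{m,l}\circ\tilde{\mathfrak{c}}_{l,n}=\tilde{\mathfrak{c}}_{m,n}$ for $m>l>n$.

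Next I would record three structural properties of the family $\{\tilde{\mathcal{F}}_{q_m,q_n}\}$, exactly as in Proposition~\ref{proposition monotonic generating cyclic partition}: monotonicity in the lower index, $\tilde{\mathcal{F}}_{q_m,q_{n+1}}>\tilde{\mathcal{F}}_{q_m,q_n}$, which follows from the cocycle identity together with $\tilde{\mathcal{F}}_{q_{n+1}}>\tilde{\mathcal{F}}_{q_n,q_{n+1}}$; the cocycle relation above; and the fact that $T^{\a_n}$ cyclically permutes the atoms of $\tilde{\mathcal{F}}_{q_m,q_n}$ for every $m>n$. The last property is the place where one must use the arithmetic of Lemma~\ref{8.90}: since $\a_n=(p_n/q_n)\gamma_n$ with $\gcd(p_n,q_n)=1$, the translation $T^{\a_n}$ acts on the index $i$ of $\tilde{\mathcal{F}}_{q_n}$ as a single $q_n$-cycle, and the congruences $\gamma_m^{(i)}\equiv\gamma_n^{(i)}\bmod q_n$ together with the divisibility relations $q_m=r_m\gamma_{m-1}^{(h)}$, $\gamma_{m+1}^{(h)}=s_m\gamma_m^{(h)}$ guarantee that the finer decompositions $\tilde{\mathcal{F}}_{q_m,q_n}$ carry exactly that $q_n$-cycle. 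This is already contained in part~(1) of Proposition~\ref{proposition cyclic approximation of translations} applied at each level, and plays the role that the commutation $\phi^{1/q_n}\circ h_j=h_j\circ\phi^{1/q_n}$ played in the AbC version.

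Then I would run the convergence argument. From part~(2) of Proposition~\ref{proposition cyclic approximation of translations} we have $\mu_h(\Gamma_{j,q_m}\triangle\Gamma_{j,q_m,q_{m+1}})<1/(2^{m-3}q_m)$, and passing from level $m$ to level $m+1$ inside a fixed atom $\tilde{\mathfrak{c}}_{m,n}(\Gamma_{i,q_n})$ amounts to replacing each of its level-$m$ subatoms $\Gamma_{j,q_m}$ by $\Gamma_{j,q_m,q_{m+1}}$; hence
\begin{equation*}
\sum_{i=0}^{q_n-1}\mu_h\!\left(\tilde{\mathfrak{c}}_{m+1,n}(\Gamma_{i,q_n})\,\triangle\,\tilde{\mathfrak{c}}_{m,n}(\Gamma_{i,q_n})\right)\;\le\;\sum_{j=0}^{q_m-1}\mu_h\!\left(\Gamma_{j,q_m}\triangle\Gamma_{j,q_m,q_{m+1}}\right)\;<\;\frac{8}{2^{m}} .
\end{equation*}
Thus for each fixed $n$, $i$ the sets $\tilde{\mathfrak{c}}_{m,n}(\Gamma_{i,q_n})$ form a Cauchy sequence in the measure algebra of $\T^h$ as $m\to\infty$, and I would set $\tilde{\mathcal{M}}_n:=\{\lim_{m\to\infty}\tilde{\mathfrak{c}}_{m,n}(\Gamma_{i,q_n}): 0\le i<q_n\}$. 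Monotonicity $\tilde{\mathcal{M}}_{n+1}>\tilde{\mathcal{M}}_n$ and cyclic permutation by $T^{\a_n}$ follow by passing to the limit in the corresponding properties of $\tilde{\mathcal{F}}_{q_m,q_n}$ (using the cocycle identity for the first and continuity of $T^{\a_n}$ on the measure algebra for the second). For the generating condition $\tilde{\mathcal{M}}_n\to\e$, the estimate above also shows that each atom of $\tilde{\mathcal{M}}_n$ differs from the correspondingly indexed atom $\Gamma_{i,q_n}$ of $\tilde{\mathcal{F}}_{q_n}$ by a set whose total measure tends to $0$ as $n\to\infty$; since $\tilde{\mathcal{F}}_{q_n}$ is a generating sequence, $\bigvee_n\tilde{\mathcal{M}}_n$ is the full $\sigma$-algebra mod~$0$, and for a monotone sequence of partitions this is precisely $\tilde{\mathcal{M}}_n\to\e$ (one may also invoke $\text{diam}(\Gamma_n')=d_n\to0$ from part~(7) of Lemma~\ref{8.90}).

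I expect the only real obstacle to be the cyclic-permutation bookkeeping in the second step: one has to verify that the $q_n$-cyclic action of $T^{\a_n}$ survives both the iterated composition of the correspondences $\tilde{\mathfrak{c}}_{m,n}$ and the subsequent measure-algebra limit. In the AbC setting this is transparent thanks to the explicit commutation with the conjugating maps $h_j$; here it must be read off from the congruence and divisibility relations of Lemma~\ref{8.90}, i.e. from the compatibility modulo $q_n$ of the fundamental-domain decompositions of the flows $T^{t\gamma_m}$ across levels. Once that is in hand, the remainder is the routine Cauchy-and-limit argument already carried out for Proposition~\ref{proposition monotonic generating cyclic partition}.
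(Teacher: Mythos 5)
Your proposal follows essentially the same route as the paper's proof: you introduce the index-matching and inclusion correspondences between $\T^h/\tilde{\mathcal{F}}_{q_n}$, $\T^h/\tilde{\mathcal{F}}_{q_{n+1}}$ and $\T^h/\tilde{\mathcal{F}}_{q_n,q_{n+1}}$, compose them, and define $\tilde{\mathcal{M}}_n$ as the measure-algebra limit, with the Cauchy estimate coming from part (2) of Proposition \ref{proposition cyclic approximation of translations} exactly as in the proof of Proposition \ref{proposition monotonic generating cyclic partition}. You in fact spell out the convergence and cyclic-permutation bookkeeping that the paper merely delegates to the earlier proposition and to \cite{AK}, so the argument is correct and matches the intended one.
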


\begin{proof}
We use a method similar to the proof of \ref{proposition monotonic generating cyclic partition}. We define for any $n$, the following three maps 
\begin{align}
& \mathfrak{q}_{n+1,n}^{(\mathfrak{1})}:\T^h/\tilde{\mathcal{F}}_{q_n}\to\T^h/\tilde{\mathcal{F}}_{q_n,q_{n+1}}\qquad\text{defined by}\quad \mathfrak{q}_{n+1,n}^{(\mathfrak{1})}(\Gamma_{i,q_n}):=\Gamma_{i,q_n,q_{n+1}}\\
& \mathfrak{q}_{n+1,n}^{(\mathfrak{2})}:\T^h/\tilde{\mathcal{F}}_{q_{n+1}}\to\T^h/\tilde{\mathcal{F}}_{q_n,q_{n+1}}\quad\text{defined by}\nonumber\\
&\hspace{150pt}\mathfrak{q}_{n+1,n}^{(\mathfrak{2})}(\Gamma_{i,q_{n+1}}):=\Gamma_{j,q_n,q_{n+1}}\;\text{if}\;\Gamma_{i,q_{n+1}}\subset\Gamma_{j,q_n,q_{n+1}}\\
& \mathfrak{q}_{n+1,n}:\T^h/\tilde{\mathcal{F}}_{q_n}\to\T^h/\tilde{\mathcal{F}}_{q_{n+1}}\qquad\text{defined by}\quad \mathfrak{q}_{n+1,n}:=(\mathfrak{q}_{n+1,n}^{(\mathfrak{2})})^{-1}\circ\mathfrak{q}_{n,n+1}^{(\mathfrak{1})}
\end{align}
and more generally we define the composition map for any $m$ and $n$ with $m>n$ as follows:
\begin{align}
 \mathfrak{q}_{m,n}:\T^h/\tilde{\mathcal{F}}_{q_n}\to\T^h/\tilde{\mathcal{F}}_{q_{m}}\qquad\text{defined by}\quad \mathfrak{q}_{n,m}=\mathfrak{q}_{m,m-1}\circ\ldots\circ\mathfrak{q}_{n+1,n}
\end{align}
We now define the partition $\tilde{\mathcal{M}}=\{\lim_{m\to\infty}\mathfrak{q}_{m,n}(\Gamma_{i,q_n}): 0\leq i< q_n\}$ (see proposition \ref{proposition monotonic generating cyclic partition}) and finally the correspondence: 
\begin{align}
\mathfrak{q}_{\infty,n}:\T^h/\tilde{\mathcal{F}}_{q_n}\to\T^h/\tilde{\mathcal{M}}\qquad\text{defined by}\quad \mathfrak{q}_{\infty,n}=\lim_{m\to\infty}\mathfrak{q}_{m,n}
\end{align}
The rest of the proof involves proving that $\tilde{\mathcal{M}}$ is indeed a partition satisfying the required conditions. This part can be completed identical to proposition \ref{proposition monotonic generating cyclic partition} and we do not repeat it again.
\end{proof}

\subsection{Analytic diffeomorphisms metrically isomorphic to a shift on a Torus}

Our goal in this section is to prove theorem  \ref{theorem nsr total translations}:

\begin{proof}
First we introduce the following two correspondences
\begin{align}
& K_n:\T^h/\tilde{\mathcal{F}}_{q_n}\to\T^d/\mathcal{F}_{q_n}\qquad\qquad\text{defined by}\qquad K_n(\Gamma_{i,q_n})=H_n^{-1}(\Delta_{i,q_n})\\
& \tilde{K}_n:\T^h/\tilde{\mathcal{M}}_n\to\T^d/\mathcal{M}_n\qquad\qquad\text{defined by}\qquad \tilde{K}_n(\mathfrak{q}_{\infty,n}(\Gamma_{i,q_n}))=\mathfrak{p}_{\infty,n}(H_n^{-1}(\Delta_{i,q_n})) \label{4.5124}
\end{align}
Clearly the above two maps satisfy $\tilde{K}_n=\mathfrak{p}_{n,\infty}\circ K_n\circ\mathfrak{q}_{n,\infty}^{-1}$. We claim that we can choose parameters carefully so that the following condition can be satisfied:
\begin{align} \label{5.436}
K_{n+1}|_{\tilde{\mathcal{F}}_{q_n,q_{n+1}}}=\mathfrak{p}_{n+1,n}\circ K_n\circ\mathfrak{q}_{n+1,n}^{-1}
\end{align}
Before we do that, we make some observation about $K_n$ and $\tilde{K}_n$. First note that we have the following relationship:
\begin{align} 
\tilde{K}_n\circ T^{\a_n}=T_n\circ \tilde{K}_n
\end{align}
Indeed, observe that using proposition \ref{proposition monotonic generating cyclic partition 2}, \ref{4.5124}, proposition \ref{proposition monotonic generating cyclic partition} we get
\begin{align*}
\tilde{K}_n(T^{\a_n}(\mathfrak{q}_{\infty,n}(\Gamma_{i,q_n})))= & \;\tilde{K}_n(\mathfrak{q}_{\infty,n}(\Gamma_{p_n+i\mod q_n,q_n}))\\
= &\; \mathfrak{p}_{\infty,n}(H_n^{-1}(\Delta_{p_n+i\mod q_n,q_n}))\\
= &\;\mathfrak{p}_{\infty,n}(T_n(H_n^{-1}(\Delta_{i,q_n})))\\
= &\; T_n(\mathfrak{p}_{\infty,n}(H_n^{-1}(\Delta_{i,q_n})))\\
= &\;T_n(\tilde{K}_n(\mathfrak{q}_{n,\infty}(\Gamma_{i,q_n})))
\end{align*}
The next observation we need is 
\begin{align}
\tilde{K}_{n+1}|_{\T^h/\tilde{\mathcal{M}}_n}=\tilde{K}_n
\end{align}
Indeed observe that using \ref{4.5124},
\begin{align*}
\tilde{K}_{n+1}(\mathfrak{q}_{\infty,n}(\Gamma_{i,q_n}))= & \; \tilde{K}_{n+1}(\mathfrak{q}_{\infty,n+1}\circ\mathfrak{q}_{n+1,n}(\Gamma_{i,q_n}))\\
=&\;  \mathfrak{p}_{\infty,n+1}(K_{n+1}(\mathfrak{q}_{n+1,n}(\Gamma_{i,q_n})))\\
=&\;  \mathfrak{p}_{\infty,n+1}(\mathfrak{p}_{n,n+1}(K_n(\Gamma_{i,q_n})))\\
=&\;  \mathfrak{p}_{\infty,n}(K_n(\Gamma_{i,q_n}))\\
=&\;  \tilde{K}_n(\mathfrak{q}_{n,\infty}(\Gamma_{i,q_n}))
\end{align*}

Our job now is to choose parameters correctly in proposition \ref{proposition cyclic approximation of translations} and the  analytic approximation by conjugation scheme and simultaneously construct $K_n$ s satisfying condition \ref{5.436}.

The construction is by induction and assume that we have selected parameters $\a_j,p_j,q_j,\gamma_j,r_j,s_j$ for $j=1,2, \ldots, n $ in proposition \ref{proposition cyclic approximation of translations} so that these satisfy all the conditions in lemma \ref{8.90}. We recall that the parameter $m_n$ in lemma \ref{8.90} can be chosen to be arbitrarily large. We will use this freedom to make the approximation by conjugation scheme work.  Assume the approximation by conjugation scheme has been successfully carried out up to the $n$ th stage. 

At the $n+1$ th stage, we choose parameters and proceed with our construction in the following order:
\begin{enumerate}

\item Choose integer vector $\gamma_{n+1}^h$ and integers $r_n, s_n$ as in lemma \ref{8.90}. 

\item Choose the parameter $k_n:=s_n\gamma_{n+1}^{(h)}$ for the approximation by conjugation scheme.  Next we define the partition:
\begin{align}
&\tilde{\mathcal{F}}_{k_nq_n}\coloneqq\{\Gamma_{i,k_nq_n}:\Gamma_{i,k_nq_n}\coloneqq T^{i/(k_nq_n)}(\cup\{T^{t\gamma_{n+1}}\Gamma_{n+1}: 0\leq t<1/(k_nq_n)\}),\nonumber\\
&\hspace{280pt} i=0,\ldots, k_nq_n-1\}
\end{align}
Note that with this choice of $k_n$ we have $\tilde{\mathcal{F}}_{k_nq_n}>\tilde{\mathcal{F}}_{q_n,q_{n+1}}$. 

\item Choose the functions $a_n$ in the approximation by conjugation scheme. In order to do so, we define the following correspondence:
\begin{align}
\hat{K}_n:\T^h/\tilde{\mathcal{F}}_{k_nq_n}\to\T^d/\mathcal{T}_{k_nq_n}\qquad\qquad\text{defined by}\qquad\hat{K}_n(\Gamma_{i,k_nq_n})=\Delta_{i,k_nq_n}
\end{align}
Now we choose a function $a_n:\{0,\ldots,k_n-1\}\to\{0,\ldots,q_n-1\}$ so that the following equality holds:
\begin{align}
R_{0,q_n}:=\bigcup_{i=0}^{k_n-1}\Delta_{a_n(i)k_n+i,k_nq_n}=\hat{K}_n(\Gamma_{0,q_n,k_nq_{n}})
\end{align}
This allows us to define the partition
\begin{align}
\mathcal{R}_{a_n,k_n,q_n}:=\big\{R_{i,q_n}:R_{i,q_n}:=\phi^{i/q_n}R_{0,q_n}\big\}
\end{align}
Note that $\hat{K}_n(\Gamma_{i,q_n,q_{n+1}})=R_{i,q_n}$ and hence we conclude that $\hat{K}_n\circ T^{\gamma_n/q_n}=\phi^{1/q_n}\circ\hat{K}_n$.

\item  Choose the parameter $l_n$ large enough so that the analytic approximation by conjugation scheme works. 

\item we choose the parameter $m_n$ in proposition \ref{proposition cyclic approximation of translations}. We require that $m_n=\gamma_n^{(h)}l_n$. 

\end{enumerate}

So, after having chosen all the parameters, we can define the partition $\tilde{\mathcal{F}}_{q_{n+1}}$ of $\T^h$ and note that $\tilde{\mathcal{F}}_{q_{n+1}}>\tilde{\mathcal{F}}_{q_n,k_nq_n}$. Now we define the following correspondence:
\begin{align}
\bar{K}_n:\T^h/\tilde{\mathcal{F}}_{q_{n+1}}\to\T^d/\mathcal{T}_{q_{n+1}}\qquad\qquad\text{defined by}\qquad\bar{K}_n(\Gamma_{i,q_{n+1}})=\Delta_{i,q_{n+1}}
\end{align}
and observe that $\bar{K}_n(\Gamma_{i,k_nq_n})=\hat{K}_n(\Gamma_{i,k_nq_n})$. All that remains is to verify that $K_{n+1}$ satisfy \ref{5.436}. So we calculate using definitions and facts from proposition \ref{proposition monotonic generating cyclic partition} and its proof:
\begin{align*}
\mathfrak{p}_{n+1,n}\circ K_{n}\(\Gamma_{i,q_n}\)= &\; \mathfrak{p}_{n+1,n}\circ H_{n}^{-1}\(\Delta_{i,q_n}\)\\
= &\; H_{n+1}^{-1}\circ\mathfrak{c}_{n+1,n}\(\Delta_{i,q_n}\)\\
= &\; H_{n+1}^{-1}\(\bigcup_{j:\Delta_{j,q_{n+1}}\subset R_{i,q_n}}\Delta_{j, q_{n+1}}\)\\
= &\; H_{n+1}^{-1}\(\hat{K}_n(\Gamma_{i,q_n,q_{n+1}})\)\hspace{100pt}\ldots(\text{see item 3 above})\\
= &\; H_{n+1}^{-1}\(\hat{K}_n(\bigcup_{j:\Gamma_{j,k_nq_{n}}\subset  \Gamma_{i,q_n,q_{n+1}}}\Gamma_{j,k_nq_n})\)\\
= &\; H_{n+1}^{-1}\(\bigcup_{j:\Gamma_{j,k_nq_{n}}\subset  \Gamma_{i,q_n,q_{n+1}}}\hat{K}_n(\Gamma_{j,k_nq_n})\)\\
= &\; H_{n+1}^{-1}\(\bigcup_{j:\Gamma_{j,k_nq_{n}}\subset  \Gamma_{i,q_n,q_{n+1}}}\bar{K}_n(\Gamma_{j,k_nq_n})\)\\
= &\; H_{n+1}^{-1}\circ\bar{K}_n\(\Gamma_{i,q_n,q_{n+1}}\)\\
= &\; H_{n+1}^{-1}\circ\bar{K}_n\circ\mathfrak{q}_{n,n+1}\(\Gamma_{i,q_n}\)\\
= &\; {K}_{n+1}\circ\mathfrak{q}_{n,n+1}\(\Gamma_{i,q_n}\)
\end{align*}
This completes the proof
\end{proof}

\section{Minimal diffeomorphisms with a prescribed number of ergodic invariant measures.}
Let $\rho>0$ and $r \in \N$. In order to prove Theorem \ref{theorem prescribed no of measures} we aim at constructing a minimal $T \in\text{Diff }^\omega_\rho(\T^2\, \mu )$ with exactly $r$ ergodic invariant measures. We fix an arbitrary countable set $\Xi= \left\{\rho_i\right\}_{i\in \N}$ of Lipschitz functions that is dense in $C\left(\T^2, \R\right)$. In addition to our usual assumptions we require the number $l_n$ to satisfy
\begin{equation} \label{cond l birk}
l_n > n^2 \cdot \|DH^{-1}_n \|_0 \cdot \max_{i=1,\ldots, n} \text{Lip}(\rho_i),
\end{equation}
where $\text{Lip}(\rho)$ is the Lipschitz constant of $\rho$.

First of all, we show that a permutation $\Pi$ of the partition $\mathcal{S}_{kq,l} $ which commutes with $\phi^{1/q}$ is a block slide type of map. This property will be required in the construction of our conjugation map in subsection \ref{subsec:constrmin}: $h_n = h_{\mathfrak{1},n} \circ h_{\mathfrak{2},n}$. This time there are different parts of the torus $\mathbb{T}^2$ introduced with distinct aims. On the one hand, we will divide it into $r$ sets $N_t$ by requirements on the $x_2$-coordinate. Each set naturally supports an absolutely continuous probability measure $\mu_t$ given by the normalized restriction of the Lebesgue measure $\mu$. These will enable us to build the ergodic invariant measures as the limits $\xi_t$ of the sequence $\xi^n_t \coloneqq \left(H_n\right)^{\ast} \mu_t$. \\
On the other hand, we will use stripes corresponding to small parts of the $x_1$-axis on which the conjugation map $h^{-1}_{\mathfrak{2},n}$ will intermingle the sets $\tilde{N}_t$ to prove minimality of the limit diffeomorphism $T$. These parts are measure theoretically insignificant because the measure of these sets will converge to zero as $n \rightarrow \infty$. \\
In order to achieve these aims we need the so-called trapping map $h^{-1}_{\mathfrak{1},n}$ introduced in subsection \ref{subsec:constrmin}. On the ``minimality'' - part, this map captures parts of every orbit $\left\{\phi^{\alpha_{n}} \circ H_n\left(x\right)\right\}_{k=0,...,q_{n}-1}$ so that the conjugation map $h^{-1}_{\mathfrak{2},n}$ can spread it over the almost whole manifold. Then we can prove minimality in chapter \ref{min} by arguing that every element in a family of sufficiently small cubes covering the whole manifold is met by the orbit $\left\{h^{-1}_{n}  \circ \phi^{k \alpha_{n}} \circ H_n\left(x\right)\right\}_{k=0,...,q_{n}-1}$ and the image of any cube under $H^{-1}_{n-1}$ has a small diameter, which converges to $0$ as $n\rightarrow \infty$. In addition the trapping map is used to gain control of almost everything of every orbit $\left\{H^{-1}_n \circ \phi^{k\alpha_{n}}  \left(x\right)\right\}_{k=0,...,q_{n}-1}$. This allows us to prove a convergence result on Birkhoff sums (see Lemma \ref{lem:birk}), which in turn enables us to exclude the existence of further ergodic invariant measures besides the previously mentioned $\xi_t$.

\subsection{Approximation of arbitrary permutations}

Suppose we have three natural numbers  $l$, $k$ and $q$, and a permutation $\Pi$ of the partition  $\mathcal{S}_{kq,l} $ which commutes with $\phi^{1/q}$. Our objective here is to show that $\Pi$ is a block slide type of map. This will be achieved in two steps. In the first step we show that there exists a product of two 2-cycles and then we will prove that all transpositions are block-slide type of maps:

\subsubsection*{Product of two 2-cycles}

\begin{figure}
\centering
\begin{tabular}{c}
{\includegraphics[width=7.5cm, height=15cm]{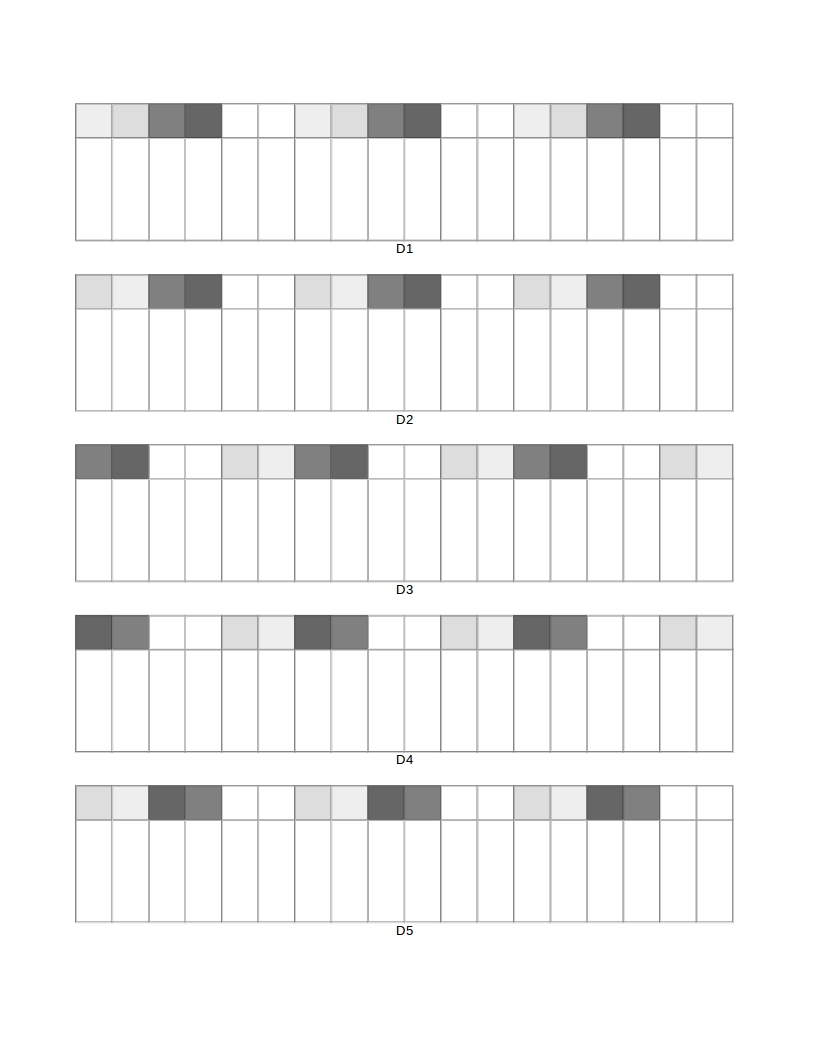}}
\end{tabular}
\caption{Illustration of the action of $\mathfrak{g}_{k,q,l}$ on $\T^2$ with $l=4$, $k=6$ and $q=3$. We note how the blocks are moved by the intermediate maps: $D1\xrightarrow{\mathfrak{g}_{k,q,l}^{(\mathfrak{1})}} D2\xrightarrow{\mathfrak{g}_{k,q,l}^{(\mathfrak{2})}} D3\xrightarrow{\mathfrak{g}_{k,q,l}^{(\mathfrak{3})}} D4\xrightarrow{\mathfrak{g}_{k,q,l}^{(\mathfrak{4})}} D5 $. Note that every other atoms apart from the four we flipped are kept constant at the end.}
\label{2 cycle}
\end{figure}

We now show that for any choice of natural numbers $l,k$ and $q$ there exists a block-slide type of map which has the same effect as the product of two 2-cycles in the symmetric group of $lkq$ elements. 

In order to make this precise we need the following notation. For any $i=0,\ldots, kq-1$ and $j=0,\ldots, s-1$ we define 
\begin{align}
\mathcal{S}_{kq,l}:=\{S_{i,j}^{kq,l}:=[i/(kq),(i+1)/(kq))\times[j/l,(j+1)/l), 0\leq i< kq, 0\leq j< l\}
\end{align}

First we define the following step function:
\begin{align}
& \s^{(\mathfrak{4})}_{kq}:(0,1]\to\R\qquad\qquad\text{defined by}\qquad \s^{(\mathfrak{4})}_{kq}(t)=\begin{cases} 2/(kq) &  \quad\text{if } t \in ((l-1)/l,1] \\ 0 & \quad\text{if } t\in (0,(l-1)/l]\end{cases} 
\end{align}
And then the following two maps of block-slide type:
\begin{align}
& \mathfrak{g}_{kq,l}^{(\mathfrak{1})}:\T^2\to\T^2\qquad\qquad\text{defined by}\qquad\mathfrak{g}_{kq,l}^{(\mathfrak{1})}\big((x_1,x_2)\big)=(x_1 - \s^{(\mathfrak{4})}(x_2),x_2)\\
& \mathfrak{g}_{kq,l}^{(\mathfrak{2})}:\T^2\to\T^2\qquad\qquad\text{defined by}\qquad\mathfrak{g}_{kq,l}^{(\mathfrak{2})}\big((x_1,x_2)\big)=(x_1 + \s^{(\mathfrak{4})}(x_2),x_2)
\end{align}
Finally we piece everything together and define the following block-slide type of map
\begin{align}
& \mathfrak{g}_{k,q,l}:\T^2\to\T^2\qquad\qquad\text{defined by}\qquad\mathfrak{g}_{k,q,l}=\mathfrak{g}_{k,q,l}^{(\mathfrak{2})}\circ\mathfrak{f}_{0,k,q}\circ\mathfrak{g}_{k,q,l}^{(\mathfrak{1})}\circ\mathfrak{f}_{0,k,q}
\end{align}
using the map $\mathfrak{f}_{0,k,q}$ from section \ref{constr transl}.

We end this section after noting that the above block-slide type of map takes $S_{0,l-1}^{kq,l}\to S_{1,l-1}^{kq,l}$, $S_{1,l-1}^{kq,l}\to S_{0,l-1}^{kq,l}$, $S_{2,l-1}^{kq,l}\to S_{3,l-1}^{kq,l}$, $S_{3,l-1}^{kq,l}\to S_{2,l-1}^{kq,l}$ and acts as an identity everywhere else. This is the same as the product of two $2$-cycles in the symmetric group on a set of $k\times l$ elements.

\subsubsection*{Transposition}

\begin{figure}
\centering
\begin{tabular}{c c}
{\includegraphics[width=7cm, height=15cm]{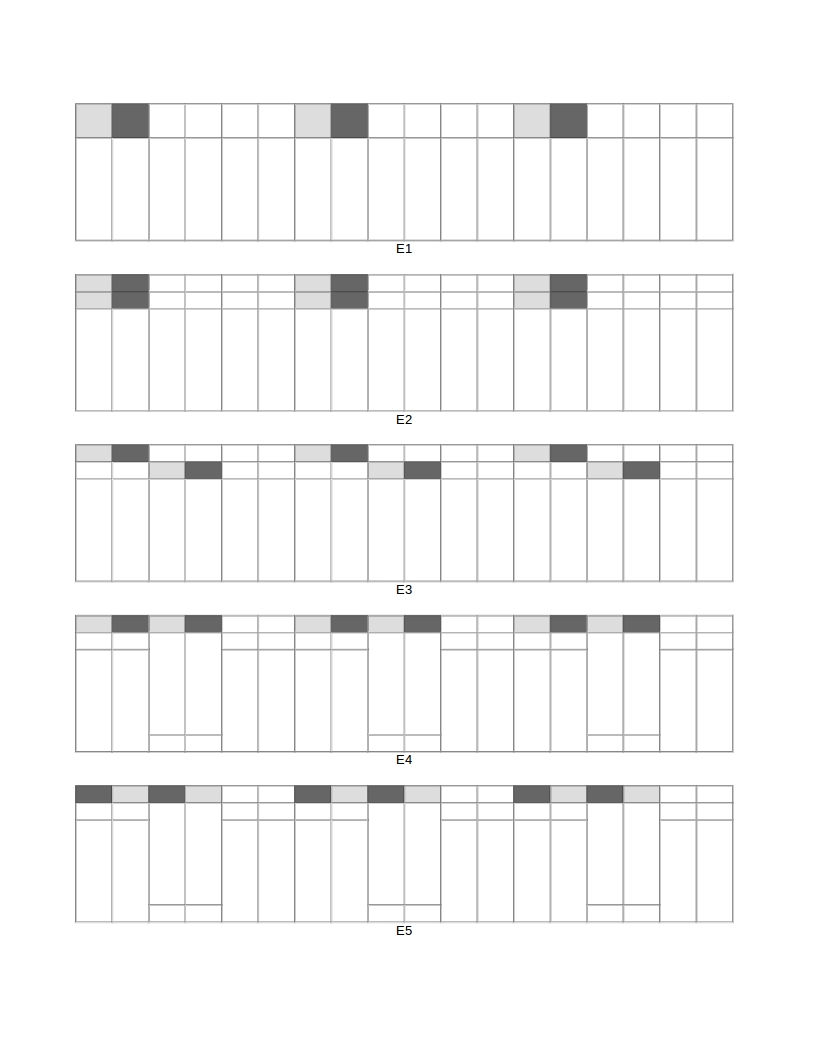}} & {\includegraphics[width=7cm, height=15cm]{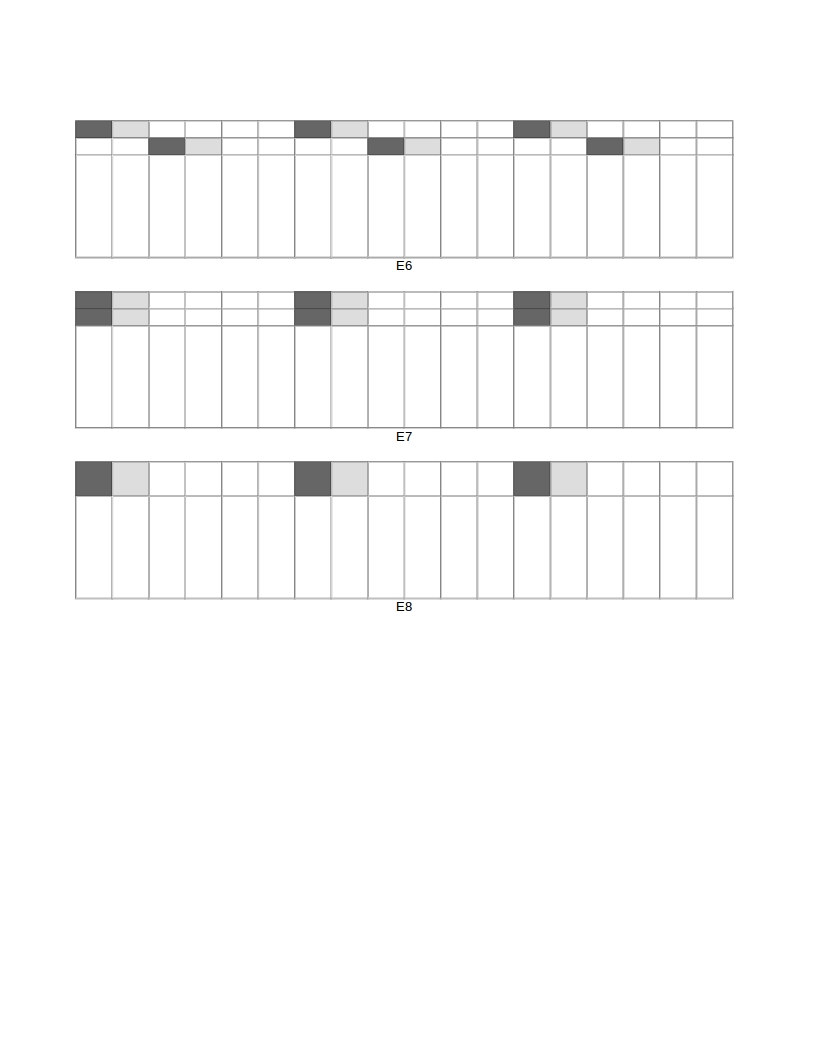}}
\end{tabular}
\caption{Illustration of the action of $\mathfrak{h}_{k,q,l}$ on $\T^2$ with  $l=4$, $k=6$ and $q=3$. We note how the blocks are moved by the intermediate maps: $E2\xrightarrow{\mathfrak{h}_{6,3,4}^{(\mathfrak{1})}} E3\xrightarrow{\mathfrak{h}_{6,3,4}^{(\mathfrak{3})}} E4\xrightarrow{\mathfrak{g}_{6,3,4}} E5\xrightarrow{\mathfrak{h}_{6,3,4}^{(\mathfrak{4})}} E6\xrightarrow{\mathfrak{h}_{6,3,4}^{(\mathfrak{2})}} E7 $. Note that every other atoms apart from the two we flipped are kept constant at the end.}
\label{transposition}
\end{figure}

Finally we show that there exists a block-slide on the torus which switches two blocks and leaves all other invariant. Unfortunately if we work with the partition $\mathcal{S}_{kq,l}$, we do not know if such a map exists. The way we circumnavigate this problem is to go to a finer partition, namely $\mathcal{S}_{kq,2l}$ and show that with some care, a product of two 2-cycles in $\mathcal{S}_{kq,2l}$ is a transposition in $\mathcal{S}_{kq,l}$

First we define the following two step functions: 
\begin{align}
& \s^{(\mathfrak{5})}_{kq,l}:(0,1]\to\R\qquad\qquad\text{defined by}\qquad \s^{(\mathfrak{5})}_{kq,l}(t)=\begin{cases} 0 &  \quad\text{if } t \in ((2l-1)/(2l),1] \\ 2/(kq) &  \quad\text{if } t \in ((2l-2)/(2l),(2l-1)/(2l)] \\ 0 & \quad\text{if } t\in (0,(2l-2)/(2l)]\end{cases}\\
& \s^{(\mathfrak{6})}_{kq,l}:(0,1]\to\R\qquad\qquad\text{defined by}\qquad \s^{(\mathfrak{6})}_{kq,l}(t)=\begin{cases} 0 &  \quad\text{if } qt\mod 1 \in (0,2/k] \\ 1/(2l) &  \quad\text{if } qt\mod 1 \in (2/k,4/k] \\ 0 & \quad\text{if } qt\mod 1 \in (4/k,1]\end{cases}  
\end{align}
And then the following four block-slide type of map:
\begin{align}
& \mathfrak{h}_{kq,l}^{(\mathfrak{1})}:\T^2\to\T^2\qquad\qquad\text{defined by}\qquad\mathfrak{h}_{kq,l}^{(\mathfrak{1})}\big((x_1,x_2)\big)=(x_1 + \s^{(\mathfrak{5})}_{kq,l}(x_2),x_2)\\
& \mathfrak{h}_{kq,l}^{(\mathfrak{2})}:\T^2\to\T^2\qquad\qquad\text{defined by}\qquad\mathfrak{h}_{kq,l}^{(\mathfrak{2})}\big((x_1,x_2)\big)=(x_1 - \s^{(\mathfrak{5})}_{kq,l}(x_2),x_2)\\
& \mathfrak{h}_{kq,l}^{(\mathfrak{3})}:\T^2\to\T^2\qquad\qquad\text{defined by}\qquad\mathfrak{h}_{kq,l}^{(\mathfrak{3})}\big((x_1,x_2)\big)=(x_1, x_2 + \s^{(\mathfrak{6})}_{kq,l}(x_1))\\
& \mathfrak{h}_{kq,l}^{(\mathfrak{4})}:\T^2\to\T^2\qquad\qquad\text{defined by}\qquad\mathfrak{h}_{kq,l}^{(\mathfrak{4})}\big((x_1,x_2)\big)=(x_1, x_2 - \s^{(\mathfrak{6})}_{kq,l}(x_1))
\end{align}
Finally we piece everything together and define the following block-slide type of map
\begin{align}
& \mathfrak{h}_{k,q,l}:\T^2\to\T^2\qquad\qquad\text{defined by}\qquad\mathfrak{h}_{k,q,l} \coloneqq  \mathfrak{h}_{k,q,l}^{(\mathfrak{2})}\circ\mathfrak{h}_{k,q,l}^{(\mathfrak{4})}\circ\mathfrak{g}_{k,q,2l}\circ\mathfrak{h}_{k,q,l}^{(\mathfrak{3})}\circ\mathfrak{h}_{k,q,l}^{(\mathfrak{1})}
\end{align}
More generally we can define for any $(\mathfrak{i},\mathfrak{j})\neq (0,l-1)$, the following block-slide type of map:
\begin{align}
& \mathfrak{h}_{k,q,l}^{(\mathfrak{i},\mathfrak{j})}:\T^2\to\T^2\qquad\text{defined by}\\
&\hfill\qquad\mathfrak{h}_{k,q,l}^{(\mathfrak{i},\mathfrak{j})} \coloneqq \phi^{(\mathfrak{i}-1)/(kq)}\circ(\mathfrak{h}_{kq,l}^{(\mathfrak{4})})^{2(l-1-\mathfrak{j})}\circ\mathfrak{h}_{k,q,l}\circ(\mathfrak{h}_{k,q,l}^{(\mathfrak{3})})^{2(l-1-\mathfrak{j})}\circ\phi^{-(\mathfrak{i}-1)/(kq)}
\end{align}
We end this section by observing that $\mathfrak{h}_{k,q,l}^{(\mathfrak{i},\mathfrak{j})}$ maps $S_{0,l-1}^{kq,l}\to S_{\mathfrak{i},\mathfrak{j}}^{kq,l}$, $S_{\mathfrak{i},\mathfrak{j}}^{kq,l}\to S_{0,l-1}^{kq,l}$ and acts as identity everywhere else. So we obtained all transpositions of the form $(1,n)$ in the symmetric group on a set of $kl$ elements.

\subsubsection*{All permutations are block-slide type of maps}

We now show that any permutation which commutes with $\phi^{1/q}$ is a block-slide type of map.

\begin{maintheorem} \label{permutation = block-slide}
Let $\Pi$ be any permutation of $kql$ elements. We can naturally consider $\Pi$ to be a permutation of the partition $\mathcal{S}_{kq,l}$ of the torus $\T^2$. Assume that $\Pi$ which commutes with $\phi^{1/q}$. Then $\Pi$ is a block-slide type of map.
\end{maintheorem}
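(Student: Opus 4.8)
The plan is to combine two standard ingredients: first, that the block-slide type maps are closed under composition (immediate from the definition: a composition of compositions of the elementary shears is again of that form, and each elementary shear $x_i\mapsto x_i+s(x_j)\bmod 1$ is invertible with block-slide inverse $x_i\mapsto x_i-s(x_j)\bmod 1$ since $x_j$ is untouched); and second, that $\Pi$ can be written as a composition of rigid rearrangements that we have \emph{already} realised by block-slide maps in the preceding constructions. Here one interprets a permutation of $\mathcal{S}_{kq,l}$ as the piecewise translation of $\T^2$ moving each atom rigidly onto its image, so that it suffices to exhibit this piecewise translation as such a composition.

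First I would identify the group $G$ of permutations of the $kql$ atoms of $\mathcal{S}_{kq,l}$ that commute with $\phi^{1/q}$. Since $\phi^{1/q}$ acts on the atoms by $S_{i,j}^{kq,l}\mapsto S_{i+k,j}^{kq,l}$ (first index mod $kq$), the atoms split into $kl$ orbits of length $q$, a fundamental set being $\{S_{i,j}^{kq,l}:0\le i<k,\ 0\le j<l\}$. An element of $G$ is therefore determined by its restriction to this fundamental set, and $G$ is isomorphic to the wreath product $(\Z/q)\wr \mathrm{Sym}(kl)$: the $\mathrm{Sym}(kl)$-factor records the induced permutation of the $kl$ orbits, and the $(\Z/q)^{kl}$-factor records, orbit by orbit, the cyclic ``twist'' by which the atoms get matched up inside the target orbit. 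Hence it is enough to realise a generating set of $G$ by block-slide maps, for instance the transpositions of the $kl$ orbits together with one full single-orbit twist (the restriction of $\phi^{1/q}$ to one orbit, extended by the identity, which is itself in $G$ and generates the $\Z/q$ in its coordinate).

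Next I would match these generators with the maps built in this section. The ($\phi^{1/q}$-symmetrised) transposition maps $\mathfrak{h}_{k,q,l}^{(\mathfrak{i},\mathfrak{j})}$ realise all transpositions of orbits through the distinguished orbit of $S_{0,l-1}^{kq,l}$; since transpositions through a fixed point generate the whole symmetric group and block-slide maps are closed under composition, the entire $\mathrm{Sym}(kl)$-factor is realised. For the twist direction, a single-orbit twist is a product of transpositions of pairs of atoms lying in a common orbit, and each transposition of two atoms is in turn a composition of the maps $\mathfrak{h}_{k,q,l}^{(\mathfrak{i},\mathfrak{j})}$ and their conjugates by powers of $\phi^{1/(kq)}$ (using $(a\,b)=(a\,c)(a\,b)(a\,c)$ to reduce to transpositions through $S_{0,l-1}^{kq,l}$), hence also block-slide. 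Writing $\Pi$ as a product of these generators and invoking closure under composition then gives that $\Pi$ is a block-slide type of map. (In fact the argument shows that every permutation of the $kql$ atoms reachable from the identity by these elementary moves is block-slide; the hypothesis that $\Pi$ commute with $\phi^{1/q}$ is the form in which the statement is applied later and is what places $\Pi$ in the span of the available symmetric-group moves.)

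The geometric work — that each of $\mathfrak{h}_{k,q,l}$, $\mathfrak{g}_{k,q,l}$, the $\mathfrak{h}_{k,q,l}^{(\mathfrak{i},\mathfrak{j})}$, and the translations $\phi^{j/q}$ is a genuine block-slide map with the claimed effect on $\mathcal{S}_{kq,l}$ — has been done above. Thus the only remaining point, and the one I expect to require the most care, is the purely combinatorial bookkeeping: verifying that a permutation commuting with $\phi^{1/q}$ is exactly encoded by the wreath-product data, and that the maps $\mathfrak{h}_{k,q,l}^{(\mathfrak{i},\mathfrak{j})}$ (which a priori only move a distinguished orbit) do generate the $\mathrm{Sym}(kl)$-factor together with all the twist directions. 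Once that is checked, the theorem follows from closure under composition with no further analysis.
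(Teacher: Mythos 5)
Your identification of the centralizer of $\phi^{1/q}$ as the wreath product $(\Z/q\Z)\wr\mathrm{Sym}(kl)$, and your realization of the $\mathrm{Sym}(kl)$-factor by the orbit-transpositions $\mathfrak{h}^{(\mathfrak{i},\mathfrak{j})}_{k,q,l}$ together with closure of block-slide maps under composition, is sound, and it is in fact more careful than the paper's own one-line proof, which simply invokes that transpositions generate the symmetric group on the $kl$ orbit classes. The gap is in the step that is supposed to produce the twist directions of the $(\Z/q\Z)^{kl}$-factor.

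Every map built in this section is a composition of shears $x_1\mapsto x_1+s(x_2)$, which commute with every $\phi^t$, and shears $x_2\mapsto x_2+s(x_1)$ whose step functions are $\tfrac{1}{q}$-periodic (they are defined through $qt\bmod 1$); hence each of $\mathfrak{f}_{k,q}$, $\mathfrak{g}_{k,q,2l}$, $\mathfrak{h}_{k,q,l}$, $\mathfrak{h}^{(\mathfrak{i},\mathfrak{j})}_{k,q,l}$, and every conjugate by a power of $\phi^{1/(kq)}$, commutes with $\phi^{1/q}$. In particular, despite the paper's loose wording ``acts as identity everywhere else'', the map $\mathfrak{h}^{(\mathfrak{i},\mathfrak{j})}_{k,q,l}$ exchanges two whole $\phi^{1/q}$-orbits (i.e.\ $q$ pairs of atoms, matched with some shift), and no composition of these maps and their translation-conjugates can equal a transposition of two individual atoms once $q\ge 2$, because such a transposition does not commute with $\phi^{1/q}$. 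So your reduction of a single-orbit twist to two-atom transpositions uses moves you do not have; and even staying inside the centralizer, the homomorphism $(\Z/q\Z)\wr\mathrm{Sym}(kl)\to\Z/q\Z$ summing the twist coordinates vanishes on every $\mathfrak{h}^{(\mathfrak{i},\mathfrak{j})}_{k,q,l}$ (an involution exchanging two orbits has twists $+c$ and $-c$) and is conjugation-invariant, so the subgroup these generate cannot contain a single-orbit twist, whose image is $1$. A clean repair: genuine two-atom transpositions \emph{are} block-slide maps --- run the very same transposition construction with $kq$ in place of $k$ and with $q$ replaced by $1$, since nothing forces the individual factors of $\Pi$ to commute with $\phi^{1/q}$, only the composite --- and with these your wreath-product bookkeeping closes, even proving the statement without the commutation hypothesis. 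Alternatively, note that in the paper's applications $\Pi$ always maps fundamental-domain atoms to fundamental-domain atoms, so only the $\mathrm{Sym}(kl)$-part you did realize is ever needed; that is essentially the paper's (terse) argument.
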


\begin{proof}
Follows from the fact that all permutations are generated by transpositions. 
\end{proof}

\subsection{Description of the required combinatorics} \label{subsec:constrmin}

Here we prove theorem \ref{theorem prescribed no of measures}. We begin by describing the combinatorics we need at the $n+1$ th stage of the induction process abstractly.

For $t=0,\ldots, r-1$, we consider the following subsets of $\T^2$:
\begin{align}
    N_t\coloneqq \T^1\times \Big[\frac{t}{r},\frac{t+1}{r}\Big)
\end{align}
We denote the restriction of the Lebesgue measure $\mu$ to $N_t$ by $\mu_t$.

\begin{figure}
\centering
\begin{tabular}{c}
{\includegraphics[scale = .5, trim={0 18cm 0 0},clip]{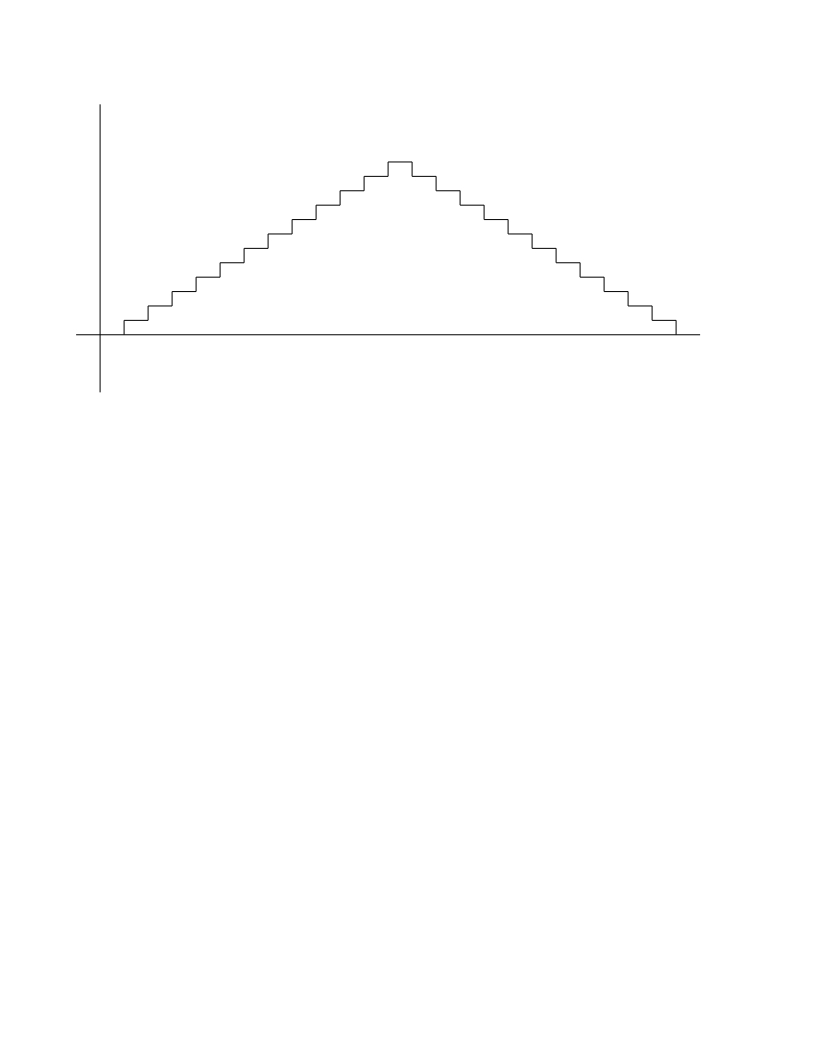}}
\end{tabular}
\caption{Picture of $\kappa^{(\mathfrak{1})}$ with $n=5$ drawn over the interval $(0,1/(n^2l^3q)]$ on the $x$-axis.}
\label{step function}
\end{figure}

For natural numbers $n,l$ and $q$ we define the following partition of the torus $\T^2$:
\begin{align}
    \mathcal{G}_{l^3q}\coloneqq \Big\{G_{i,j, l^3q}:G_{i,j,l^3q}\coloneqq \Big[\frac{i}{l^3q}, \frac{i+1}{l^3q}\Big)\times\Big[\frac{j}{lr},\frac{j+1}{lr}\Big), 0\leq i< l^3q, 0\leq j<lr\Big\}
\end{align}
We define the following permutation of the above partition: 
\begin{align}
    \mathfrak{h}^{(\mathfrak{2})}:\T^2\to\T^2
\end{align}
which acts on the atoms of partition $\mathcal{G}_{l^3q}$ that are contained in $[0,1/(lq)) \times \T^1$ in the following way (for $t=0, \ldots,r-1$) 
\begin{align}
   &\text{If } 0\leq i <l: \quad  (\mathfrak{h}^{(\mathfrak{2})})^{-1}\Big(G_{i,j,l^3q}\Big) =  G_{i',j',l^3q}, \qquad \text{where } i'= \lfloor \frac{j}{r} \rfloor, \ j'=r \cdot i +j \mod r, \\
  &\text{if } l \leq i <l^2: \quad  (\mathfrak{h}^{(\mathfrak{2})})^{-1}\Big(G_{i,tl+j,l^3q}\Big) =  G_{i',tl+j',l^3q}, \qquad \text{where } i'= \lfloor \frac{i}{l} \rfloor \cdot l + j, \ j'=i \mod l.
\end{align}
We extend this permutation to the whole of $\T^2$ equivariantly. Since the above description is not very clear, we give a somewhat imprecise but more demonstrative description of the above map. Note that the following rectangles get mapped in the following way:
\begin{align*}
     & (\mathfrak{h}^{(\mathfrak{2})})^{-1} \Big(\Big[\frac{i}{l^3q},\frac{i+1}{l^3q}\Big)\times\Big[0,1\Big)\Big)= \Big[0,\frac{1}{l^2q}\Big)\times\Big[\frac{i}{l},\frac{i+1}{l}\Big)\qquad\text{if}\quad 0\leq i<l\\
    & (\mathfrak{h}^{(\mathfrak{2})})^{-1} \Big(\Big[\frac{i}{l^3q},\frac{i+1}{l^3q}\Big)\times\Big[\frac{t}{r},\frac{t+1}{r}\Big)\Big)= \Big[\frac{i'}{l^2q},\frac{i'+1}{l^2q}\Big)\times\Big[\frac{tl+j'}{lr},\frac{(t+1)l+j'+1}{lr}\Big)\quad\text{if}\quad l\leq i<l^2,
\end{align*}
where $i'= \lfloor \frac{i}{l} \rfloor$ and $j'= i \mod l$. Notice that in the first region narrow rectangular stripes of full height get squished and are distributed over the full height of the torus which will allow us to prove minimality. While all other rectangles are mapped to rectangles of small diameter but they remain within the horizontal strip $N_t$ on the torus. These stripes will form the support of a preimage of the invariant measures. \\
By the previous subsection we know that this is a block slide type of map and hence allows good analytic approximations by Proposition \ref{proposition approximation}. We denote this $(\varepsilon, \delta)$-approximation by $h^{(\mathfrak{2})}$, the corresponding ``bad set'' by $E$ and set $F=\T^2 \setminus E$.

\begin{figure}
\centering
\begin{tabular}{c}
{\includegraphics[height =15cm, width = 14cm, trim={0 6cm 3cm 0},clip]{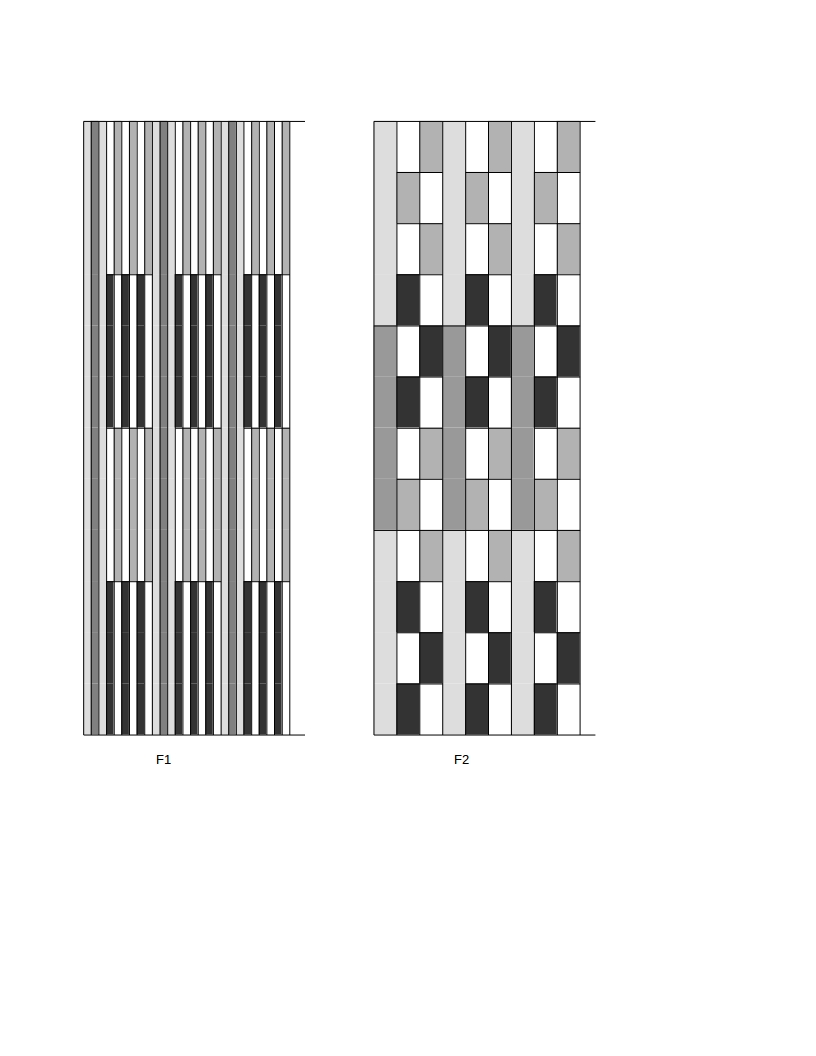}}
\end{tabular}
\caption{Illustration of the action of $(\mathfrak{h}^{(\mathfrak{2})})^{-1}$ on the rectangle $[0,1/q)\times [0,1)$ inside the torus $\T^2$. The combinatorics depicted here is repeated with periodicity $1/q$. We used $r=4$, and $l=3$ for this diagram. }
\label{minimal with specified number of measures}
\end{figure}

For this number $\delta$ and given natural numbers $q,l,n$, we define the following step function: 
\begin{align}
    \tilde{\kappa}^{(\mathfrak{1})}:[0,\frac{1}{l^2q})\to\R\qquad\text{defined by}\qquad \tilde{\kappa}^{(\mathfrak{1})}(x)=\begin{cases} 0 &\quad\text{if}\quad x\in [0,\frac{1}{n^2l^3q})\\\frac{\d}{lr} &\quad\text{if}\quad x\in [\frac{1}{n^2l^3q},\frac{2}{n^2l^3q})\\
    2\frac{\d}{lr} &\quad\text{if}\quad x\in [\frac{1}{n^2l^3q},\frac{2}{n^2l^3q})\\
    \ldots &\quad \ldots\\\ldots & \quad \ldots\\
    (\lfloor \frac{n^2}{2}\rfloor -2)\frac{\d}{lr} &\quad\text{if}\quad x\in [\frac{\lfloor \frac{n^2}{2}\rfloor-1}{n^2l^3q},\frac{\lfloor \frac{n^2}{2}\rfloor}{n^2l^3q})\\
    (\lfloor \frac{n^2}{2}\rfloor -1)\frac{\d}{lr} &\quad\text{if}\quad x\in [\frac{\lfloor \frac{n^2}{2}\rfloor}{n^2l^3q},\frac{\lfloor \frac{n^2}{2}\rfloor+1}{n^2l^3q})\\
    (\lfloor \frac{n^2}{2}\rfloor -2)\frac{\d}{lr} &\quad\text{if}\quad x\in [\frac{\lfloor \frac{n^2}{2}\rfloor+1}{n^2l^3q},\frac{\lfloor \frac{n^2}{2}\rfloor + 2}{n^2l^3q})\\
    \ldots &\quad \ldots\\\ldots & \quad \ldots\\
    \frac{\d}{lr} &\quad\text{if}\quad x\in [\frac{n^2-2}{n^2l^3q},\frac{n^2-1}{n^2l^3q})\\
    0 &\quad\text{if}\quad x\in [\frac{n^2-1}{n^2l^3q},\frac{1}{l^3q})\end{cases}
\end{align}
Let $\kappa^{\mathfrak{1}}$ be the $\frac{1}{l^3q}$-periodic real-analytic $\left( \tilde{\varepsilon}, \tilde{\delta} \right)$-approximation of $\tilde{\kappa}^{(\mathfrak{1})}$. With the aid of this we define
\begin{align}
    h^{(\mathfrak{1})}:\T^2\to\T^2\qquad\text{defined by}\qquad  h^{(\mathfrak{1})}(x_1,x_2)\coloneqq (x_1, x_2+\kappa^{(\mathfrak{1})}(x_1))
\end{align}
and we often refer to the above map as ``trapping map''. The purpose of this map is to capture a large portion of \emph{every} $\phi$ orbit. \\
Hereby we introduce the so-called ``trapping zones'' (for $t=0, \dots,r-1$ and $s=0, \dots, lq-1$)

\begin{align}
    & A_{s,i} = h^{(\mathfrak{1})}\Big( \phi^{\frac{s}{lq}}\big(\bigcup_{j=0}^{lr-1}G_{i,j,l^3q}\big) \cap F \Big)  \qquad\text{if}\quad 0\leq i<l\\
    & B^t_{s,i} = h^{(\mathfrak{1})}\Big(\phi^{\frac{s}{lq}}\big(\bigcup_{j=0}^{l-1}G_{i,tl+j,l^3q} \big) \cap F\Big) \qquad\text{if}\quad l\leq i<l^2
\end{align}

In our specific constructions we define $h_{n+1}=h_{\mathfrak{1},n+1} \circ h_{\mathfrak{2},n+1}$ using the parameters $q=q_n$, $l=l_n$, $\varepsilon<\frac{\varepsilon_n}{2^{l_nq_n}}$, $\delta =\delta_n < \frac{1}{n^4 \cdot 2^{l_nq_n}}$, $\tilde{\varepsilon} = \tilde{\varepsilon}_n < \frac{\delta_n}{2^{l_nq_n}}$ and $\tilde{\delta}= \tilde{\delta}_n < \frac{\delta_n}{2^{l_nq_n}}$.

As announced we have the following trapping property:

\begin{lemma} \label{lem trap}
Let $x \in \T^2$ be arbitrary. Then the orbit $\left\{ \phi^{k \alpha_{n+1}}(x) \right\}_{k=0, \ldots, q_{n+1}}$ meets every set $A_{s,i}$. Moreover, for every $B^t_{s,i}$ at least $\omega^n_t(x) \cdot \frac{\left(1-\frac{8}{n^2}\right) \cdot q_{n+1}}{l^3_n q_n}$ iterates of the orbit $\left\{ \phi^{k \alpha_{n+1}}(x) \right\}_{k=0, \ldots, q_{n+1}}$ lie in $B^t_{s,i}$, where $\omega^n_t(x)$ does not depend on $s,i$. On the contrary at most $\frac{10}{n^2}  \cdot q_{n+1}$ iterates are not captured by the collection of sets $B^t_{s,i}$.
\end{lemma}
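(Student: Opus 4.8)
The plan is to trace a single $\phi$-orbit through the explicit combinatorial structure of the conjugation map $h_{n+1} = h_{\mathfrak{1},n+1}\circ h_{\mathfrak{2},n+1}$. First I would reduce to the model map: by the $\frac{1}{q_n}$-equivariance of $\mathfrak{h}^{(\mathfrak{2})}$ and of $\kappa^{(\mathfrak{1})}$ (which is $\frac{1}{l_n^3 q_n}$-periodic), it suffices to understand what happens inside one fundamental rectangle $[0,1/q_n)\times[0,1)$ and then argue periodically. Since $\alpha_{n+1} = p_{n+1}/q_{n+1}$ with $q_{n+1} = s_n k_n l_n q_n^2$ (via the AbC parameter relations, here with $k_n$ absorbed into the minimality construction), the orbit $\{\phi^{k\alpha_{n+1}}(x)\}_{k=0,\dots,q_{n+1}}$ equidistributes along the $x_1$-direction: it visits each of the $l_n q_n$ vertical strips $\phi^{s/(l_nq_n)}([0,1/(l_nq_n))\times\T^1)$ exactly $q_{n+1}/(l_nq_n)$ times (up to the boundary term), and within each such strip it further refines into the $l_n^3 q_n$ columns $G_{i,\cdot,l_n^3q_n}$, hitting each column roughly $q_{n+1}/(l_n^3 q_n)$ times. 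This is the arithmetic backbone of the statement: the count $\frac{q_{n+1}}{l_n^3 q_n}$ in the lemma is exactly the number of orbit points landing in one column of the finest partition $\mathcal{G}_{l_n^3 q_n}$.

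Next I would handle the two types of zones separately. For the sets $A_{s,i}$ with $0\le i<l_n$: these are the images under $h^{(\mathfrak{1})}$ of the "squished stripes" $\phi^{s/(l_nq_n)}\big(\bigcup_j G_{i,j,l_n^3q_n}\big)\cap F$, i.e. narrow full-height columns. Since the orbit hits the column $G_{i,\cdot}$ in strip $s$ at least once (indeed $\gg q_{n+1}/(l_n^3 q_n)\ge 1$ times, using condition \eqref{eq:condq}-type largeness of $q_n$ relative to $l_n$), and since $F=\T^2\setminus E$ has measure $>1-\delta_n$ with $\delta_n$ tiny, at least one such orbit point survives in $F$; applying $h^{(\mathfrak{1})}$ (a diffeomorphism) shows the orbit meets $A_{s,i}$. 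For the sets $B^t_{s,i}$ with $l_n\le i<l_n^2$: here I would count iterates landing in $\phi^{s/(l_nq_n)}\big(\bigcup_{j=0}^{l_n-1}G_{i,tl_n+j,l_n^3q_n}\big)\cap F$. The key quantity $\omega^n_t(x)$ is the proportion of the orbit whose $x_2$-coordinate, after the trapping map $h^{(\mathfrak{1})}$ has acted, lies in the horizontal band $N_t=\T^1\times[t/r,(t+1)/r)$; concretely $\omega^n_t(x) = \mu_t$-mass seen along the orbit, and it is independent of $(s,i)$ precisely because the $x_1$-equidistribution decouples from the $x_2$-behaviour once we are inside a fixed band. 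The factor $\big(1-\tfrac{8}{n^2}\big)$ comes from discarding the orbit points that fall in the "bad set" $E$ (measure $<\delta_n \ll 1/n^4$) together with the "trapping teeth" of width $\sim 1/(n^2 l_n^3 q_n)$ built into $\tilde\kappa^{(\mathfrak{1})}$ — there are of order $n^2$ teeth each contributing a $1/n^2$ fraction of each column, but only boundedly many teeth are genuinely lost, giving the $O(1/n^2)$ deficit. Summing these losses over all columns and all strips yields the bound of at most $\frac{10}{n^2}q_{n+1}$ uncaptured iterates.

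The main obstacle will be making the equidistribution quantitative with the correct error term: I need that $\{k\alpha_{n+1}\bmod 1\}_{k=0}^{q_{n+1}}$ hits each interval of length $1/(l_n^3 q_n)$ in exactly $q_{n+1}/(l_n^3 q_n)$ points (this is exact because $l_n^3 q_n \mid q_{n+1}$ and $\gcd(p_{n+1},q_{n+1})=1$, so the orbit is a perfect arithmetic progression covering all residues), and then carefully account for the boundary intervals $I_i$ of the analytic approximation (Proposition \ref{proposition approximation}) and the teeth of $\tilde\kappa^{(\mathfrak{1})}$. The bookkeeping of which orbit points are lost to $E$ versus lost to being "between teeth" versus genuinely trapped requires the parameter choices $\delta_n<1/(n^4 2^{l_nq_n})$, $\tilde\delta_n<\delta_n/2^{l_nq_n}$ to dominate, so that the total measure of excluded sets times $q_{n+1}$ stays below $\frac{1}{n^2}q_{n+1}$; assembling the constant $10$ (versus $8$) is then a matter of combining these three independent sources of loss. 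I expect the $x_1$-direction counting to be clean and the delicate part to be verifying that $\omega^n_t(x)$ is well-defined, independent of $(s,i)$, and that the trapping teeth indeed redistribute a $(1-O(1/n^2))$-fraction of each column's worth of orbit points into every target band $B^t_{s,i}$.
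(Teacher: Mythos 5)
Your arithmetic backbone (the orbit $\{x_1+k\alpha_{n+1}\}_{k<q_{n+1}}$ is a rigid rotation by $p_{n+1}/q_{n+1}$, hence hits each of the $l_n^3q_n$ columns of $\mathcal{G}_{l_n^3q_n}$ essentially $q_{n+1}/(l_n^3q_n)$ times) is exactly what the paper uses. But there is a genuine gap in your capture mechanism. You argue that since $\mu(E)<\delta_n$ is tiny, ``at least one such orbit point survives in $F$'', and more generally you account for the $\bigl(1-\tfrac{8}{n^2}\bigr)$ factor by discarding ``orbit points that fall in the bad set $E$''. This measure heuristic is not valid pointwise: $\phi^t$ only translates the $x_1$-coordinate, so all $q_{n+1}$ orbit points share the single value $x_2$, and the error set $E$ of $h^{(\mathfrak{2})}$ (as well as the gaps between the sets $G_{i,j,l_n^3q_n}\cap F$) contains \emph{full horizontal strips} of width $\sim\delta_n/(l_nr)$ around the lines $x_2=j/(l_nr)$. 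If $x_2$ sits in such a strip, then without further input \emph{every} orbit point misses $\bigcup_j G_{i,j,l_n^3q_n}\cap F$, no matter how equidistributed the $x_1$-coordinates are and no matter how small $\mu(E)$ is. This is precisely why the trapping map $h^{(\mathfrak{1})}$ exists, and the heart of the paper's proof is the pigeonhole argument you leave unproved: the staircase $\tilde\kappa^{(\mathfrak{1})}$ takes the values $0,\tfrac{\delta}{lr},2\tfrac{\delta}{lr},\dots$ on the $n^2$ sub-sections of each column, with step size equal to the width of the horizontal gaps and total height $n^2\delta_n/(l_nr)<1/(n^2l_nr)$; hence for a \emph{fixed} $x_2$ the shifted values $x_2-\kappa^{(\mathfrak{1})}(x_1')$ can land in a gap on at most four of the $n^2$ sub-sections (plus the $\tilde\delta_n$-boundaries), which is where $\bigl(1-\tfrac{6}{n^2}\bigr)\bigl(1-\tilde\delta_n\bigr)\geq 1-\tfrac{8}{n^2}$ comes from. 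Your phrase ``only boundedly many teeth are genuinely lost'' is exactly the statement that needs proof, and your proposal contains no argument for it.

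Two smaller points. First, your closing remark that the teeth ``redistribute a $(1-O(1/n^2))$-fraction of each column's worth of orbit points into every target band $B^t_{s,i}$'' conflates the two conjugation maps: spreading mass over all vertical levels is the job of $h^{(\mathfrak{2})}$ (used for minimality), whereas the trapping map only perturbs $x_2$ by less than $1/(n^2l_nr)$, so the captured iterates of a given point go almost entirely into the single band $N_t$ containing $x_2$; the lemma is consistent with this because the lower bound is weighted by $\omega^n_t(x)$, which may vanish for all but one $t$ (its independence of $(s,i)$ follows from the column count being the same for every $(s,i)$, not from any redistribution). Second, in the count of uncaptured iterates you should also include the iterates landing in the $A$-columns ($l_n$ out of every $l_n^2$ columns), which are not $B$-regions; this contributes a fraction $\sim 1/l_n\ll 1/n^2$ and is harmless, but it belongs in the bookkeeping leading to the $\tfrac{10}{n^2}q_{n+1}$ bound.
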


\begin{proof}
Let $x=\left(x_1, x_2 \right) \in \T^2$ and $i \in \left\{0,\ldots, l^3q-1\right\}$ be arbitrary. Note that 
\begin{equation}
G_{i,j,l^3q} \cap F \supseteq \left[ \frac{i + \frac{\delta}{2}}{l^3q},  \frac{i + 1- \frac{\delta}{2}}{l^3q} \right] \times \left[ \frac{j+\frac{\delta}{2}}{lr}, \frac{j+1-\frac{\delta}{2}}{lr} \right]
\end{equation}
by our approximation Proposition \ref{proposition approximation}. Due to $\frac{n^2 \cdot \delta_n}{l_nr}< \frac{1}{n^2 l_n r}$ and our choice of the approximative step function $\kappa^{(\mathfrak{1})}$ there are at most four sections $\left[ \frac{i}{l^3q}+ \frac{u+\frac{\tilde{\d}}{2}}{n^2l^3q},  \frac{i}{l^3q}+ \frac{u+1-\frac{\tilde{\d}}{2}}{n^2l^3q} \right]$, where $u \in \left\{1,\ldots,n^2-2\right\}$, on an arbitrary $\left[ \frac{i + \frac{\delta}{2}}{l^3q},  \frac{i + 1- \frac{\delta}{2}}{l^3q} \right]$-section such that $x_2$ does not belong to any of the $h^{(\mathfrak{1})}\Big( \big(G_{i,j,l^3q}\big) \cap F \Big)$-domains for $j=0,\ldots, lr-1$. Since $\left\{ k \cdot \alpha_{n+1} \right\}_{k=0,\ldots, q_{n+1}-1}$ is equidistributed on $\mathbb{S}^1$, the number of iterates $k$, such that $\left\{ \phi^{k \alpha_{n+1}}(x) \right\}_{k=0, \ldots, q_{n+1}-1}$ is captured by one of these domains is at least 
\begin{equation}
\lfloor \frac{\left(1-\frac{6}{n^2}\right) \cdot \left(1-\tilde{\delta}_n\right) \cdot q_{n+1}}{l^3_n q_n} \rfloor \geq \frac{\left(1-\frac{8}{n^2}\right) \cdot q_{n+1}}{l^3_n q_n}.
\end{equation}
Depending on the point $x \in \T^2$ there is a portion $\omega^n_t(x)$ of these iterates spent in trapping regions $B^t_{s,i}$ belonging to $N_t$. This portion does not depend on the indices $s,i$. Since there are $l_nq_n \cdot \left(l^2_n-l_n \right)$ such indices, the last claim follows. 
\end{proof}

\subsection{Proof of minimality} \label{min}

\subsubsection*{Criterion for minimality} 
We recall the notion of a minimal dynamical system:
\begin{definition}
Let $X$ be a topological space and $f: X \rightarrow X$ be a continuous transformation. The map $f$ is called minimal if for every $x \in X$ the orbit $\left\{f^i\left(x\right)\right\}_{i \in \mathbb{N}}$ is dense in $X$.
\end{definition}
Equivalently $f$ is minimal if for every $x \in X$ and every non-empty open set $U \subseteq X$ there is $i \in \mathbb{N}$ such that $f^i\left(x\right) \in U$. In the case of $X$ being a metric space every open set contains an $\gamma$-ball for $\gamma$ sufficiently small. Thus, $f$ is minimal if for every $x \in X$, every $\gamma >0$ and for every $\gamma$-ball $B_{\gamma}$ there is $i \in \mathbb{N}$ such that $f^i\left(x\right) \in B_{\gamma}$. Hereby, we can deduce the subsequent criterion of minimality in the setting of our constructions:
\begin{lemma} \label{lem:critmin}
Suppose that the set of iterates $\left\{h^{-1}_{n+1} \circ \phi^{i \cdot \alpha_{n+1}} \circ H_{n+1}\left(x\right)\right\}_{i=0,...,q_{n+1}-1}$ meets every set of the form $\left[ \frac{j_1}{l_nq_n},  \frac{j_1 + 1}{l_nq_n}\right] \times \left[ \frac{j_2}{l_n},  \frac{j_2 + 1}{l_n}\right]$ for every $x \in \mathbb{T}^2$. Moreover, we assume that the sequence $\left(T_n\right)_{n \in \mathbb{N}}$ constructed as in section \ref{subsec:constrmin} converges to a diffeomorphism $T$ in the Diff$^{\omega}_{\rho}$-topology and satisfies $d_0\left(T^i,T^i_{n+1}\right)<\frac{1}{2^n}$ for all $i=0,...,q_{n+1}-1$. Then $T = \lim_{n\rightarrow \infty} T_n$ is minimal.
\end{lemma}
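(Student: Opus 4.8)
The plan is to verify the metric-space criterion for minimality recalled just before the statement: it is enough to show that for every $x\in\T^2$, every $\gamma>0$ and every point $y\in\T^2$ there is $i\in\N$ with $\dist\left(T^i(x),y\right)<\gamma$, where $\dist$ is the distance on $\T^2$. Fix such $x$, $\gamma$ and $y$. The starting point is the algebraic identity
\[
h^{-1}_{n+1}\circ\phi^{i\alpha_{n+1}}\circ H_{n+1}=H_n\circ T^i_{n+1},
\]
valid for every $n$ and every $i$, which follows immediately from $H_{n+1}=h_{n+1}\circ H_n$ and $T_{n+1}=H_{n+1}^{-1}\circ\phi^{\alpha_{n+1}}\circ H_{n+1}$, since $H_n\circ H_{n+1}^{-1}=h_{n+1}^{-1}$. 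Consequently the hypothesis of the lemma, applied to our fixed $x$, says exactly that the finite orbit segment $\left\{T^i_{n+1}(x)\right\}_{i=0}^{q_{n+1}-1}$ meets every set $H_n^{-1}(B_{j_1,j_2})$, where $B_{j_1,j_2}:=\left[\frac{j_1}{l_nq_n},\frac{j_1+1}{l_nq_n}\right]\times\left[\frac{j_2}{l_n},\frac{j_2+1}{l_n}\right]$. As $(j_1,j_2)$ ranges over $\{0,\dots,l_nq_n-1\}\times\{0,\dots,l_n-1\}$ the boxes $B_{j_1,j_2}$ cover $\T^2$, and hence so do the sets $H_n^{-1}(B_{j_1,j_2})$.

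The second step is a diameter estimate. One has $\dist(B_{j_1,j_2})\le\sqrt{2}/l_n$, so by the mean value theorem $\dist\left(H_n^{-1}(B_{j_1,j_2})\right)\le\sqrt{2}\,\|DH_n^{-1}\|_0/l_n$, and by the condition \ref{cond l birk} on $l_n$ this bound tends to $0$ as $n\to\infty$. Hence I may fix $n$ so large that simultaneously $\sqrt{2}\,\|DH_n^{-1}\|_0/l_n<\gamma/2$ and $2^{-n}<\gamma/2$. The point $y$ lies in some set $C:=H_n^{-1}(B_{j_1,j_2})$; since $y\in C$ and $\dist(C)<\gamma/2$, we get $C\subseteq B(y,\gamma/2)$. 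By the hypothesis there is $i\in\{0,\dots,q_{n+1}-1\}$ with $T^i_{n+1}(x)\in C$, hence $\dist\left(T^i_{n+1}(x),y\right)<\gamma/2$. Finally the assumed estimate $d_0\left(T^i,T^i_{n+1}\right)<2^{-n}$ gives $\dist\left(T^i(x),T^i_{n+1}(x)\right)<2^{-n}<\gamma/2$, and the triangle inequality yields $\dist\left(T^i(x),y\right)<\gamma$. As $x$, $\gamma$, $y$ were arbitrary, $T$ is minimal. The assumed convergence in the $\text{Diff}^{\omega}_{\rho}$-topology is needed only to guarantee that $T$ is a genuine real-analytic diffeomorphism; the minimality argument itself is purely $C^0$.

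The delicate points in this plan are only bookkeeping ones: recognizing the $T_{n+1}$-orbit segment hidden inside $h^{-1}_{n+1}\circ\phi^{i\alpha_{n+1}}\circ H_{n+1}$, and observing that the condition \ref{cond l birk} on $l_n$ is precisely what forces the pulled-back sets $H_n^{-1}(B_{j_1,j_2})$ to shrink to diameter zero. The substantive content — that the orbit segment really does meet every such set for every $x$ — is \emph{not} part of this lemma: that is exactly what the trapping Lemma \ref{lem trap}, together with the explicit combinatorics of the map $h^{(\mathfrak{2})}$, is designed to deliver, and here it is invoked only as a hypothesis.
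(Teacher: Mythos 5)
Your proof is correct and takes essentially the same route as the paper: the paper likewise reads the hypothesis as saying that $T^i_{n+1}(x)$ hits $H^{-1}_{n}\left(\left[ \frac{j_1}{l_nq_n},  \frac{j_1 + 1}{l_nq_n}\right] \times \left[ \frac{j_2}{l_n},  \frac{j_2 + 1}{l_n}\right]\right)$, notes these pulled-back boxes cover $\T^2$ and have diameters tending to $0$, and concludes with the triangle inequality using $d_0\left(T^i,T^i_{n+1}\right)<\frac{1}{2^n}$. The only (cosmetic) difference is the justification of the diameter decay: you invoke condition \ref{cond l birk} with $\|DH_n^{-1}\|_0$, whereas the paper cites \ref{ln criterion} (written there with $\|DH_{n}\|_0$); both yield the required shrinking, and your explicit derivation of the identity $h^{-1}_{n+1}\circ\phi^{i\alpha_{n+1}}\circ H_{n+1}=H_n\circ T^i_{n+1}$ is used implicitly in the paper as well.
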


\begin{proof}
At first we observe that
\begin{equation*}
\text{diam} \left(H^{-1}_{n}\left(\left[ \frac{j_1}{l_nq_n},  \frac{j_1 + 1}{l_nq_n}\right] \times \left[ \frac{j_2}{l_n},  \frac{j_2 + 1}{l_n}\right]\right)\right) \leq \left\|DH_{n}\right\|_0 \cdot \frac{2}{l_n},
\end{equation*}
which converges to $0$ as $n\rightarrow \infty$ (because of $\left\|DH_{n} \right\|_0 < \frac{l_n}{2^n}$ by equation \ref{ln criterion}), and that the family of sets $\left[ \frac{j_1}{l_nq_n},  \frac{j_1 + 1}{l_nq_n}\right] \times \left[ \frac{j_2}{l_n},  \frac{j_2 + 1}{l_n}\right]$ covers the whole space $\T^2$. Hence, for every $\varepsilon >0$ and $y \in \mathbb{T}^m$ there is $M_1 \in \mathbb{N}$ such that for every $n \geq M_1$ there exists a set $H^{-1}_{n}\left(\left[ \frac{j_1}{l_nq_n},  \frac{j_1 + 1}{l_nq_n}\right] \times \left[ \frac{j_2}{l_n},  \frac{j_2 + 1}{l_n}\right]\right) \subseteq B_{\frac{\varepsilon}{2}}\left(y\right)$. \\
Let $x \in \T^2$, $\varepsilon >0$ and an $\varepsilon$-ball $B_{\varepsilon}\left(y\right)$, at which $y \in \mathbb{T}^2$, be arbitrary. Since $d_0\left(T^i,T^i_{n+1}\right)<\frac{1}{2^n}$ for all $i=0,...,q_{n+1}-1$  there is $M_2 \in \mathbb{N}$ such that $d_0\left(T^i,T^i_{n+1}\right)<\frac{\varepsilon}{2}$ for all $i=0,...,q_{n+1}-1$ and $n\geq M_2$. \\
We consider $n\geq \tilde{N} \coloneqq \max\left\{M_1,M_2\right\}$. Then there is a set $H^{-1}_{n}\left(\left[ \frac{j_1}{l_nq_n},  \frac{j_1 + 1}{l_nq_n}\right] \times \left[ \frac{j_2}{l_n},  \frac{j_2 + 1}{l_n}\right]\right)\subseteq B_{\frac{\varepsilon}{2}}\left(y\right)$ and by assumption an $i < q_{n+1}$ such that $T^i_{n+1}\left(x\right) \in H^{-1}_{n}\left(\left[ \frac{j_1}{l_nq_n},  \frac{j_1 + 1}{l_nq_n}\right] \times \left[ \frac{j_2}{l_n},  \frac{j_2 + 1}{l_n}\right]\right)\subseteq B_{\frac{\varepsilon}{2}}\left(y\right)$. By the triangle inequality we obtain
\begin{equation*}
d\left(T^i\left(x\right), y\right) \leq d\left(T^i\left(x\right), T^i_{n+1}\left(x\right)\right) + d\left(T^i_{n+1}\left(x\right), y\right) \leq d_0\left(T^i, T^i_{n+1}\right) + \frac{\varepsilon}{2} < \varepsilon.
\end{equation*}
Thus, we conclude $T^i\left(x\right) \in B_{\varepsilon}\left(y\right)$. Hence, $T$ is minimal. 
\end{proof}

\subsubsection*{Application of the criterion}
The conditions on the convergence of the sequence $\left(T_n\right)_{n \in \mathbb{N}}$ and proximity $d_0\left(T^i,T^i_{n+1}\right)<\frac{1}{2^n}$ for all $i=0,...,q_{n+1}-1$ are fulfilled by Remark \ref{close iterates}. Let $x \in \mathbb{T}^2$ and $\left[ \frac{j_1}{l_nq_n},  \frac{j_1 + 1}{l_nq_n}\right] \times \left[ \frac{j_2}{l_n},  \frac{j_2 + 1}{l_n}\right]$ be arbitrary. We have to show that the orbit $\left\{h^{-1}_{n+1} \circ \phi^{i\alpha_{n+1}} \circ H_{n+1}\left(x\right)\right\}_{i=0,...,q_{n+1}-1}$ meets $\left[ \frac{j_1}{l_nq_n},  \frac{j_1 + 1}{l_nq_n}\right] \times \left[ \frac{j_2}{l_n},  \frac{j_2 + 1}{l_n}\right]$. For this purpose, we note that there is $i \in \left\{0,...,q_{n+1}-1\right\}$ with $\phi^{i\alpha_{n+1}}\circ H_{n+1}\left(x\right) \in A_{j_1,j_2}$ by Lemma \ref{lem trap}. Then we compute
\begin{align*}
h^{-1}_{n+1}\left(A_{j_1,j_2} \right) & = h^{-1}_{2,n+1} \left(\phi^{\frac{j_1}{l_nq_n}}\big(\bigcup_{j=0}^{l_nr-1}G_{j_2,j,l^3_nq_n}\big) \cap F_n\right) = \phi^{\frac{j_1}{l_nq_n}} \circ h^{-1}_{2,n+1} \left(\bigcup_{j=0}^{l_nr-1}G_{j_2,j,l^3_nq_n} \cap F_n\right) \\
& \subset \phi^{\frac{j_1}{l_nq_n}} \left( \left[0, \frac{1}{l^2_nq_n}\right] \times \left[\frac{j_2}{l_n}, \frac{j_2 + 1}{l_n}\right] \right) \subset \left[\frac{j_1}{l_nq_n}, \frac{j_1+1}{l_nq_n}\right] \times \left[\frac{j_2}{l_n}, \frac{j_2 + 1}{l_n}\right]
\end{align*}
and we can apply Lemma \ref{lem:critmin} to prove the minimality of $T$.

\subsection{The ergodic invariant measures}

\subsubsection*{Construction of the measures} \label{subsubsec:constrm}
As announced we will construct the ergodic invariant measures with the aid of the normalized restrictions $\mu_t$ of the Lebesgue measure on the sets $N_t$, i.e. $\mu_t\left(A\right) = \frac{\mu\left(A \cap N_t\right)}{\mu\left(N_t\right)}$ for any measurable set $A \subseteq \mathbb{T}^2$. Since each set $N_t$ is $\phi^{\beta}$-invariant for any $\beta \in \mathbb{S}^1$, we have $\left(\phi^{\beta}\right)_{\ast} \mu_t = \mu_t$. With these we define the measures $\xi^n_t \coloneqq \left(H^{-1}_n\right)_{\ast} \mu_t$ and can prove their $T_n$-invariance:
\begin{equation*}
\left(T_n\right)_{\ast} \xi^n_t = \left(T_n\right)_{\ast} \left( \left(H^{-1}_n\right)_{\ast} \mu_t\right) = \left(T_n \circ H^{-1}_n\right)_{\ast} \mu_t = \left( H^{-1}_n \circ \phi^{\alpha_{n+1}}\right)_{\ast} \mu_t = \left(H^{-1}_n\right)_{\ast} \left(\phi^{\alpha_{n+1}}\right)_{\ast} \mu_t = \xi^n_t.
\end{equation*}
Here we used the relation $f_{\ast}g_{\ast} \mu = \left(f \circ g\right)_{\ast} \mu$ for maps $f,g$. This holds because we have for any measurable set $A$: 
\begin{equation*}
f_{\ast}g_{\ast} \mu\left(A\right) = g_{\ast}\mu\left(f^{-1}\left(A\right)\right) = \mu\left(g^{-1}\left(f^{-1}\left(A\right)\right)\right) = \mu\left(\left(f \circ g\right)^{-1}\left(A\right)\right) = \left(f \circ g \right)_{\ast} \mu\left(A\right).
\end{equation*}
In the next step we want to estimate $\mu\left(H^{-1}_{n+1}\left(N_t\right) \triangle H^{-1}_{n}\left(N_t\right)\right)$. For this purpose, we have to examine which parts of the set $N_t$ are not mapped back to $N_t$ under $h^{-1}_{n+1} = h^{-1}_{\mathfrak{2},n+1} \circ h^{-1}_{\mathfrak{1},n+1}$. The measure difference is composed of our error set $E_n \cap N_t$, the part, where the conjugation map $h^{-1}_{n+1}$ is constructed to prove minimality (i.e. on the $l_nq_n$ sets $\left[\frac{k}{l_nq_n}, \frac{k}{l_nq_n}+\frac{1}{l^2_n \cdot q_n}\right] \times \mathbb{T}$ for $k=0,...,l_nq_n-1$), and the part that is not mapped back to $N_t$ under $h^{-1}_{\mathfrak{1},n+1}$. The last one is caused by the translation about at most $\frac{n^2 \delta_n}{2l_n}$ in the $x_2$-coordinate produced by $h^{-1}_{\mathfrak{1},n+1}$. Altogether, we obtain:
\begin{equation} \label{eq:cauchy}
\mu\left(H^{-1}_{n+1}\left(N_t\right) \triangle H^{-1}_{n}\left(N_t\right)\right) = \mu\left(h^{-1}_{n+1}\left(N_t\right) \triangle N_t\right) \leq \frac{1}{l_n} + \frac{n^2 \delta_n}{l_n} +\mu(E_{n+1}) \leq \frac{1}{l_n}.
\end{equation}
Now we can use the same approach as in \cite{Win}, chapter 7: \\
By equation \ref{eq:cauchy} the sequence $\left\{H^{-1}_n\left(N_t\right)\right\}_{n \in \mathbb{N}}$ is a Cauchy sequence in the metric on the associated measure algebra. Since this space is complete (e.g. \cite{Pet}, Proposition 1.4.3.), there exists a limit $B_t \coloneqq \lim_{n \rightarrow \infty} H^{-1}_n \left(N_t\right)$ in the measure algebra. For this limit we have $\mu\left(B_t\right) = \mu\left(N_t\right)$, because $H^{-1}_n$ is measure-preserving. The sets $B_t$ and $B_s$ are measurably disjoint due to the disjointness of the sets $N_t$ and $N_s$.  Moreover, we have weak convergence of the measures $\left(\xi^n_t\right)_{n \in \mathbb{N}}$ to a measure $\xi_t$, where $\xi_t\left(A\right) = \frac{\mu\left(A \cap B_t\right)}{\mu\left(B_t\right)}$ for any measurable set $A \subseteq \mathbb{T}^2$. For this absolutely continuous measure $\xi_t$ we conclude $\lim_{n \rightarrow \infty}\left(T_n\right)_{\ast} \xi^n_t\left(A\right) = T_{\ast} \xi_t\left(A\right)$ due to the triangel inequality
\begin{align*}
\mu\left(H_n\left(T^{-1}_n A\right) \cap N_t\right) & = \mu\left(T^{-1}_n A \cap H^{-1}_n\left(N_t\right)\right) \leq \mu\left(T^{-1}_n A \cap B_t\right) + \mu\left(H^{-1}_n\left(N_t\right) \triangle B_t\right) \\
& \leq \mu\left(T^{-1}_n A \triangle T^{-1}A\right) + \mu\left(T^{-1} A \cap B_t\right) + \mu\left(H^{-1}_n\left(N_t\right) \triangle B_t\right)
\end{align*}
(where the first summand converges to $0$ as $n \rightarrow \infty$ because of $T_n \rightarrow T$). So we obtain
\begin{equation*}
\xi_t = \lim_{n \rightarrow \infty} \xi^n_t = \lim_{n \rightarrow \infty} \left(T_n\right)_{\ast} \xi^n_t = T_{\ast} \xi_t
\end{equation*}
using the shown $T_n$-invariance of the measure $\xi^n_t$. Thus, the measures $\xi_t$ are $T$-invariant. \\
Furthermore, these measures $\xi_t$ are linearly independent because the sets $B_1,...,B_d$ are measurably disjoint as noted before. Since any non-ergodic invariant measure can be written as a linear combination of ergodic measures (\cite{Wa}, Theorem 5.15), there cannot be less than $d$ ergodic measures.

\subsubsection*{Estimates on Birkhoff sums} \label{trappropmin}
In this subsection we show that the measures $\xi_t$ are the only possible ergodic measures for $T$. For this purpose, we will prove a result on the Birkhoff sums (see Lemma \ref{lem:birk2}) and have to gain control over almost everything of every $\phi$-orbit. In this connection the following sets are useful: In case of $0 \leq s < l_n q_n$, $0 < j_1 < l_n$ and $0 \leq j_2 < l_n$  we introduce
\begin{equation*}
\Delta^t_{s,j_1,j_2} = \left[\frac{s}{l_nq_n} + \frac{j_1}{l^2_n\cdot q_n}, \frac{s}{l_nq_n} + \frac{j_1+1}{l^2_n\cdot q_n} \right] \times \left[\frac{t}{r} + \frac{j_2}{l_nr}, \frac{t}{r} + \frac{j_2+1}{l_nr} \right] 
\end{equation*}
Note that there are $l^3_n q_n \cdot \left(1-\frac{1}{l_n}\right)$ such sets $\Delta^t_{s, j_1, j_2}$ on $N_t$. We denote the family of these sets by $\Omega^t_n$ as well as the union of these sets by $\tilde{\Omega}^t_n$. Then $\mu\left(N_t \setminus \tilde{\Omega}^t_n\right) = \frac{1}{l_nr}$, i.e. $\mu_t\left(N_t \setminus \tilde{\Omega}^t_n\right)\leq \frac{1}{l_n}$.\\
We observe that diam$\left(H^{-1}_{n} \left(\Delta^t_{s,j_1,j_2}\right)\right) < \|DH^{-1}_n\|_0 \cdot \frac{1}{l_n}$. By the requirements on the number $l_n$ in equation \ref{cond l birk} we obtain 
\begin{equation*}
\left| \rho_i\left(H^{-1}_{n} \left(x\right)\right) - \rho_i\left(H^{-1}_{n} \left(y\right)\right) \right| \leq  \textnormal{Lip}\left(\rho_i\right) \cdot \textnormal{diam}\left(H^{-1}_{n} \left(\Delta^t_{s,j_1,j_2}\right)\right)  < \frac{1}{n^2}
\end{equation*}
for every $x,y \in \Delta^t_{s,j_1,j_2}$ and the function $\rho_i \in \Xi$ in case of $i=1,...,n$. Averaging over all $y \in \Delta^t_{s,j_1,j_2}$ we obtain:
\begin{equation} \label{eq:block}
\left| \rho_i \left(H^{-1}_{n}\left(x\right)\right) - \frac{1}{\xi^n_t\left(H^{-1}_{n} \left(\Delta^t_{s,j_1,j_2}\right)\right)} \int_{H^{-1}_{n} \left(\Delta^t_{s,j_1,j_2}\right)} \rho_i \: d\xi^n_t \right| < \frac{1}{n^2}.
\end{equation}
Furthermore, we recall that the image of the trapping region $B^t_{s,i}$ under $h^{-1}_{n+1}$ is contained in $\Delta^t_{s,\lfloor \frac{i}{l_n} \rfloor, i \mod l_n}$. Vice versa, $B^t_{s, j_1 \cdot l_n + j_2}$ is the unique trapping region that is mapped into $\Delta^t_{s,j_1,j_2}$. Hence, we can estimate the number of $i \in \left\{0,...,q_{n+1}-1\right\}$ such that $h^{-1}_{n+1} \circ \phi^{i \cdot \alpha_{n+1}} \left(x\right)$ is contained in $\Delta^t_{s, j_1, j_2}$ by $\varpi^n_t\left(x\right)\cdot \frac{\left(1-\frac{8}{n^2}\right) \cdot q_{n+1}}{l^3_n q_n} $ for arbitrary $x \in \mathbb{T}^2$ using Lemma \ref{lem trap}.

\begin{lemma} \label{lem:birk}
Let $\rho_i \in \Xi$ and $i=1,...,n$. Then for every $y \in \mathbb{T}^2$ we have
\begin{equation*}
\inf_{\xi^n \in \Theta_n} \left| \frac{1}{q_{n+1}} \sum^{q_{n+1}-1}_{k=0} \rho_i\left(T^k_{n+1}y\right) - \int \rho_i \: d\xi^n \right| < \frac{20}{n^2} \cdot \left\|\rho_i\right\|_0 + \frac{1}{n^2},
\end{equation*}
where $\Theta_n$ is the simplex generated by $\left\{ \xi^n_0,..., \xi^n_{r-1}\right\}$.
\end{lemma}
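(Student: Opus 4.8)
The plan is to decompose the Birkhoff average $\frac{1}{q_{n+1}}\sum_{k=0}^{q_{n+1}-1}\rho_i(T^k_{n+1}y)$ along the orbit $\{T^k_{n+1}y\}_{k=0,\dots,q_{n+1}-1}$ by tracking which of the small blocks $H^{-1}_{n}(\Delta^t_{s,j_1,j_2})$ each iterate lands in. Writing $y=H^{-1}_{n+1}(x)$ for a suitable $x\in\mathbb{T}^2$ (so that $T^k_{n+1}y=H^{-1}_{n+1}\circ\phi^{k\alpha_{n+1}}(x)$), the point is that $h^{-1}_{n+1}\circ\phi^{k\alpha_{n+1}}(x)$ lies in a block $\Delta^t_{s,j_1,j_2}$ exactly when the orbit point is captured by the corresponding trapping region $B^t_{s,j_1 l_n+j_2}$, and then $T^k_{n+1}y=H^{-1}_{n}(h^{-1}_{n+1}\circ\phi^{k\alpha_{n+1}}(x))$ lies in $H^{-1}_{n}(\Delta^t_{s,j_1,j_2})$. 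First I would invoke Lemma \ref{lem trap}: for each fixed $t$ and each admissible pair $(s,i)$ with $i=j_1 l_n+j_2$, the number of iterates $k<q_{n+1}$ with orbit point in $B^t_{s,i}$ is at least $\varpi^n_t(x)\cdot\frac{(1-8/n^2)q_{n+1}}{l^3_nq_n}$, where $\varpi^n_t(x)$ is independent of $(s,i)$; and at most $\frac{10}{n^2}q_{n+1}$ iterates escape all the $B^t_{s,i}$.

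Next I would estimate $\rho_i$ on each block using equation \eqref{eq:block}: on $H^{-1}_{n}(\Delta^t_{s,j_1,j_2})$ the function $\rho_i$ is within $\frac{1}{n^2}$ of its $\xi^n_t$-average over that block (this is where condition \eqref{cond l birk} on $l_n$ enters, controlling the diameter of $H^{-1}_n(\Delta^t_{s,j_1,j_2})$ against $\mathrm{Lip}(\rho_i)$). Summing over all $k$ whose orbit point is trapped, and using that the trapping count is the \emph{same} multiple $\varpi^n_t(x)\cdot\frac{(1-8/n^2)q_{n+1}}{l^3_nq_n}$ of blocks for a fixed $t$ (so these contributions assemble, up to the $1/n^2$ block errors, into a weighted average of the full integrals $\int\rho_i\,d\xi^n_t$ weighted by $\omega^n_t:=\varpi^n_t(x)(1-8/n^2)$ — after also using that $\xi^n_t$ assigns total mass $\frac{l^3_nq_n(1-1/l_n)}{?}$-proportional weight to the union $\tilde{\Omega}^t_n$, with the leftover $\mu_t(N_t\setminus\tilde\Omega^t_n)\le\frac{1}{l_n}$ negligible), I would set $\xi^n:=\sum_t\omega^n_t\,\xi^n_t$, rescaled to a probability measure, which lies in the simplex $\Theta_n$ since $\sum_t\omega^n_t\le1$ up to the escaped-iterate defect. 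The escaped iterates number at most $\frac{10}{n^2}q_{n+1}$, each contributing at most $\|\rho_i\|_0$ in absolute value to the average; the block-approximation errors contribute at most $\frac{1}{n^2}$; and the part of each $N_t$ outside $\tilde\Omega^t_n$, together with the trapping lower-bound slack $(1-8/n^2$ versus $1)$ and the escaped mass, contributes another $O(1/n^2)\|\rho_i\|_0$. Bookkeeping these pieces gives the bound $\frac{20}{n^2}\|\rho_i\|_0+\frac{1}{n^2}$.

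The main obstacle I expect is the careful bookkeeping that turns the \emph{lower} bounds from Lemma \ref{lem trap} (at least so-many iterates in each $B^t_{s,i}$) into a genuine weighted average: one must argue that the \emph{excess} iterates — those beyond the common lower bound, or those trapped but landing in blocks $\Delta^t_{s,j_1,j_2}$ with $j_1=0$ which are excluded from $\Omega^t_n$ — together with the escaped iterates, form a set of relative size $O(1/n^2)$, so that the Birkhoff sum genuinely is, up to $O(1/n^2)\|\rho_i\|_0$, a convex combination over $t$ of the block-averaged integrals, and that the block averages themselves reconstitute $\int\rho_i\,d\xi^n_t$ up to $1/n^2$ because $\xi^n_t$ is (almost) uniformly distributed over the blocks $H^{-1}_n(\Delta^t_{s,j_1,j_2})$ comprising $H^{-1}_n(\tilde\Omega^t_n)$. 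Once the counting is organized — iterates partitioned as ``trapped in an $\Omega^t_n$-block'' versus ``exceptional'' (escaped, or in a $j_1=0$ block), with the exceptional set of size $\le(\frac{10}{n^2}+O(\frac1{l_n}))q_{n+1}$ — the triangle inequality yields the stated estimate with room to spare.
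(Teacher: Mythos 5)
Your plan follows essentially the same route as the paper's proof: set $x=H_{n+1}(y)$, take the measure $\sum_t \varpi^n_t(x)\,\xi^n_t \in \Theta_n$, split the iterates into those captured by trapping regions $B^t_{s,i}$ (counted via Lemma \ref{lem trap}, with each block's contribution compared to $\int_{H^{-1}_n(\Delta)}\rho_i\,d(\varpi^n_t(x)\xi^n_t)$ using equation \eqref{eq:block}) and the at most $\frac{10}{n^2}q_{n+1}$ escaped ones, with the leftover mass on $N_t\setminus\tilde\Omega^t_n$ absorbed via $\mu_t(N_t\setminus\tilde\Omega^t_n)\leq\frac{1}{l_n}$. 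The bookkeeping you outline is exactly the triangle-inequality estimate carried out in the paper, so the proposal is correct and not materially different.
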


\begin{proof}
Let $x \in \mathbb{T}^2$ be arbitrary. We introduce the measure $\xi^n_x \coloneqq \sum^{r-1}_{t=0} \varpi^n_t\left(x\right) \cdot \xi^n_t \in \Theta_n$. \\
The set of numbers $k \in \left\{0,1,...,q_{n+1}-1\right\}$ such that the iterates $\phi^{k \cdot \alpha_{n+1}}\left(x\right)$ are not contained in one of the trapping regions of the second kind is denoted by $I_{a}$. Referred to Lemma \ref{lem trap} there are at most $\frac{10}{n^2} \cdot q_{n+1}$ numbers in $I_{a}$. We obtain $\left|\sum_{k \in I_a} \rho_i\left( H^{-1}_{n+1} \circ \phi^{k \cdot \alpha_{n+1}} \left(x\right)\right)\right| \leq \left\|\rho_i\right\|_0 \cdot \frac{10}{n^2} \cdot q_{n+1}$. \\
Moreover, we denote the set of $k \in \left\{0,1,...,q_{n+1}-1\right\}$ such that the iterate $h^{-1}_{n+1} \circ \phi^{k\alpha_{n+1}}\left(x\right)$ is contained in the corresponding trapping region $\Delta \in \Omega^t_n$ by $I_{\Delta}$. By the above considerations there are at least $\varpi^n_t\left(x\right) \cdot \frac{\left(1-\frac{8}{n^2}\right) \cdot q_{n+1}}{l^3_n q_n}= \varpi^n_t\left(x\right) \cdot q_{n+1} \cdot \left(1-\frac{8}{n^2}\right) \cdot \mu_t\left(\Delta\right)$ and at most $\varpi^n_t\left(x\right) \cdot q_{n+1} \cdot \mu_t\left(\Delta\right)$ many numbers in $I_{\Delta}$ for an arbitrary $\Delta \in \Omega^t_n$. Thus, we obtain for an arbitrary $\Delta \in \Omega^t_n$ using equation \ref{eq:block}:
\begin{align*}
& \left|\frac{1}{q_{n+1}}\sum_{j \in I_{\Delta}} \rho_i \left( H^{-1}_{n+1} \circ \phi^{j\alpha_{n+1}}\left(x\right)\right) - \int_{H^{-1}_{n} \left(\Delta\right)} \rho_i \: d\left(\varpi^n_t\left(x\right)\xi^n_t\right) \right| \\
\leq & \frac{\left(\varpi^n_t\left(x\right)\mu_t\right)\left(\Delta\right)}{n^2} + \frac{8}{n^2} \cdot \int_{H^{-1}_{n}\left(\Delta\right)} \left|\rho_i \right| \: d\left(\varpi^n_t\left(x\right)\xi^n_t\right) \leq \left(\varpi^n_t\left(x\right)\mu_t\right)\left(\Delta\right) \cdot \left(\frac{1}{n^2} + \frac{8}{n^2} \cdot \left\|\rho_i\right\|_0 \right).
\end{align*}
Altogether, we conclude
\begin{align*}
& \left| \frac{1}{q_{n+1}} \sum^{q_{n+1}-1}_{k=0} \rho_i\left(H^{-1}_{n+1} \circ \phi^{k\alpha_{n+1}}x\right) - \int \rho_i \:d\xi^n_x \right| = \Bigg{|} \frac{1}{q_{n+1}} \sum^{q_{n+1}-1}_{k=0} \rho_i\left(H^{-1}_{n+1} \circ \phi^{k\alpha_{n+1}}(x)\right)  \\
&\qquad\qquad\qquad\qquad\qquad - \sum^{r-1}_{t=0} \left( \sum_{\Delta \in \Omega^t_n} \int_{H^{-1}_{n} \left(\Delta\right)} \rho_i \: d\left(\varpi^n_t\left(x\right)\xi^n_t\right) + \int_{H^{-1}_{n} \left(N_t \setminus \tilde{\Omega}^t_n\right)} \rho_i \: d\left(\varpi^n_t\left(x\right)\xi^n_t\right)\right) \Bigg{|} \\
& \qquad\qquad\qquad \leq  \left| \sum^{r-1}_{t=0}  \sum_{\Delta \in \Omega^t_n} \left(\frac{1}{q_{n+1}}\sum_{j \in I_{\Delta}} \rho_i \left( H^{-1}_{n+1} \circ \phi^{j\alpha_{n+1}}\left(x\right)\right) - \int_{H^{-1}_{n} \left(\Delta\right)} \rho_i  d\left(\varpi^n_t\left(x\right)\xi^n_t\right) \right)\right| \\
& \qquad\qquad\qquad\qquad\qquad\qquad + \frac{1}{q_{n+1}} \cdot \left\|\rho_i\right\|_0 \cdot \frac{10}{n^2} \cdot q_{n+1} + \left\|\rho_i\right\|_0 \cdot \sum^{r-1}_{t=0} \left(\varpi^n_t\left(x\right)\mu_t\right)\left(N_t \setminus \tilde{\Omega}^t_n\right) \\
& \qquad\qquad\qquad \leq   \frac{1}{n^2} + \frac{8}{n^2} \cdot \left\|\rho_i\right\|_0 + \left\|\rho_i\right\|_0 \cdot \frac{10}{n^2}+ \frac{2 \cdot \left\|\rho_i\right\|_0}{l_n} = \frac{20}{n^2} \cdot \left\|\rho_i\right\|_0 + \frac{1}{n^2}.
\end{align*} 
With $x= H_{n+1}\left(y\right)$ we obtain the claim.
\end{proof}

We point out that the measure $\xi^n_x$ used in the above proof was dependent on the point $x$, but independent of the function $\rho \in \Xi$. 
\begin{lemma} \label{lem:birk2}
For every $\rho \in \Xi$ and $y \in \mathbb{T}^2$ we have
\begin{equation*}
\inf_{\xi^n \in \Theta_n} \left| \frac{1}{q_{n+1}} \sum^{q_{n+1}-1}_{k=0} \rho\left(T^k\left(y\right)\right) - \int \rho \: d\xi^n \right| \rightarrow 0 \ \  \text{ as } n\rightarrow \infty,
\end{equation*}
where $\Theta_n$ is the simplex generated by $\left\{ \xi^n_0,..., \xi^n_{r-1}\right\}$.
\end{lemma}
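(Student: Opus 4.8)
The plan is to derive Lemma~\ref{lem:birk2} from Lemma~\ref{lem:birk} by upgrading the latter in two respects: replacing the approximating diffeomorphism $T_{n+1}$ by the limit $T$ inside the Birkhoff average, and removing the restriction $i \le n$ on the test function so as to treat every $\rho \in \Xi$.

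First I would fix $\rho \in \Xi$, so that $\rho = \rho_{i_0}$ for some index $i_0 \in \N$ and $\rho$ is Lipschitz. For every $n \ge i_0$ Lemma~\ref{lem:birk} is applicable to $\rho = \rho_{i_0}$ and yields, for all $y \in \T^2$,
\[
\inf_{\xi^n \in \Theta_n} \left| \frac{1}{q_{n+1}} \sum_{k=0}^{q_{n+1}-1} \rho\bigl(T^k_{n+1} y\bigr) - \int \rho \, d\xi^n \right| < \frac{20}{n^2}\,\|\rho\|_0 + \frac{1}{n^2}.
\]
Next I would replace $T_{n+1}$ by $T$ in the Birkhoff average. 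By Remark~\ref{close iterates} one has $d_0\bigl(T^i, T^i_{n+1}\bigr) < 2^{-n}$ for all $0 \le i < q_{n+1}-1$, and since $\rho$ is Lipschitz this gives $\bigl| \rho(T^i y) - \rho(T^i_{n+1} y) \bigr| \le \text{Lip}(\rho)\cdot 2^{-n}$ for those $i$, uniformly in $y \in \T^2$; the single remaining index $i = q_{n+1}-1$ contributes at most $2\|\rho\|_0 / q_{n+1}$ after normalisation. Hence
\[
\left| \frac{1}{q_{n+1}} \sum_{k=0}^{q_{n+1}-1} \rho\bigl(T^k y\bigr) - \frac{1}{q_{n+1}} \sum_{k=0}^{q_{n+1}-1} \rho\bigl(T^k_{n+1} y\bigr) \right| \le \frac{\text{Lip}(\rho)}{2^n} + \frac{2\|\rho\|_0}{q_{n+1}}
\]
uniformly over $y \in \T^2$. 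Combining the two displays, for every $n \ge i_0$ and every $y \in \T^2$ the quantity appearing in the statement is bounded by $\frac{20}{n^2}\|\rho\|_0 + \frac{1}{n^2} + \frac{\text{Lip}(\rho)}{2^n} + \frac{2\|\rho\|_0}{q_{n+1}}$, and since $\rho$ is fixed and $q_{n+1} \to \infty$ this tends to $0$ as $n \to \infty$, which is the assertion.

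I do not expect a genuine obstacle here, since the substantive content is already contained in Lemma~\ref{lem:birk}. The one point that requires care is that the passage from $T_{n+1}$ to $T$ has to be performed at the level of the $k$-th iterates for $k$ as large as $q_{n+1}-1$; mere closeness of the maps $T_{n+1}$ and $T$ in the $d_\rho$-metric would be useless for such long orbit segments, and it is exactly for this reason that the construction was arranged to deliver $d_0(T^i, T^i_{n+1}) < 2^{-n}$ on that range of iterates, and that the dense family $\Xi$ was taken to consist of Lipschitz functions rather than merely continuous ones.
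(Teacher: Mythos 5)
Your proposal is correct and follows essentially the same route as the paper: pass from the iterates of $T_{n+1}$ to those of $T$ via the uniform closeness $d_0\left(T^i,T^i_{n+1}\right)<2^{-n}$ from Remark \ref{close iterates}, and then invoke Lemma \ref{lem:birk}. The only (harmless) differences are cosmetic: you use the Lipschitz constant where the paper simply uses uniform continuity of $\rho$ on the compact torus, and you explicitly account for the index $i_0$ with $\rho=\rho_{i_0}$ and the boundary iterate $i=q_{n+1}-1$, details the paper leaves implicit.
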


\begin{proof}
By Remark \ref{close iterates} we have
\begin{equation*}
d^{\left(q_{n+1}\right)}_0\left(T,T_{n+1}\right) \coloneqq \max_{i=0,1,...,q_{n+1}-1} d_0\left(T^i, T^i_{n+1}\right) \stackrel{n \rightarrow \infty}{\rightarrow} 0.
\end{equation*}
Then for every $\rho \in \Xi$ we have $\left| \rho\left(T^i\left(x\right)\right)-\rho\left(T^i_{n+1}\left(x\right)\right)\right| \stackrel{n \rightarrow \infty}{\rightarrow} 0$ uniformly for $i=0,1,...,q_{n+1}-1$, because every continuous function on the compact space $\mathbb{T}^2$ is uniformly continuous. Thus, we get: $\left\| \frac{1}{q_{n+1}} \sum^{q_{n+1}-1}_{i=0} \rho\left(T^i\left(x\right)\right) - \frac{1}{q_{n+1}} \sum^{q_{n+1}-1}_{i=0} \rho\left(T^i_n\left(x\right)\right)\right\|_0 \stackrel{n \rightarrow \infty}{\rightarrow} 0$. Applying the previous Lemma \ref{lem:birk} we obtain the claim.
\end{proof}

Since the family $\Xi$ is dense in $C\left(\mathbb{T}^2, \mathbb{R}\right)$, the convergence holds for every continuous function by an approximation argument. \\
Now we can prove that the measures $\xi_0,...,\xi_{r-1}$ are the only possible ergodic ones: Assume that there is another ergodic invariant probability measure $\xi$. By the Birkhoff Ergodic Theorem we have for every $\rho \in C\left(\mathbb{T}^2, \mathbb{R}\right)$
\begin{equation*}
\lim _{n \rightarrow \infty} \frac{1}{n}\sum^{n-1}_{k=0} \rho\left(T^k\left(x\right)\right) = \int_{\mathbb{T}^2} \rho \; d\xi \;\;\; \text{ for $\xi$-a.e. } x \in \mathbb{T}^2.
\end{equation*}
With the aid of Lemma \ref{lem:birk2} we obtain for every $\rho \in C\left(\mathbb{T}^2, \mathbb{R}\right)$ and $x$ in a set of $\xi$-full measure:
\begin{equation*}
\int_{\mathbb{T}^2} \rho \; d\xi = \lim _{n \rightarrow \infty} \frac{1}{n}\sum^{n-1}_{k=0} \rho\left(T^k\left(x\right)\right) = \lim _{n \rightarrow \infty} \frac{1}{q_{n+1}}\sum^{q_{n+1}-1}_{k=0} \rho\left(T^k\left(x\right)\right) = \lim _{n \rightarrow \infty} \int_{\mathbb{T}^2} \rho \;d\xi^n,
\end{equation*}
where $\xi^n$ is in the simplex generated by $\left\{\xi^n_0,...,\xi^n_{r-1}\right\}$. As noted this measure does not depend on the function $\rho$. Thus, we have for every $\rho \in C\left(\mathbb{T}^2, \mathbb{R}\right)$: $\lim _{n \rightarrow \infty} \int_{\mathbb{T}^2} \rho \;d\xi^n=\int_{\mathbb{T}^2} \rho \; d\xi$. Since the simplex generated by $\left\{\xi_0,...,\xi_{r-1}\right\}$ is weakly closed, this implies that $\xi$ is in this simplex. We recall that ergodic measures are the extreme points in the set of invariant Borel probability measures (see \cite{Wa}, Theorem 5.15.). Then $\xi$ has to be one of the measures $\left\{\xi_0,...,\xi_{r-1}\right\}$ and we obtain a contradiction. Hence, the measures $\xi_t$, $t=0,\ldots, r-1$ are the only possible ergodic ones. Since we have already observed that these are linearly independent and any non-ergodic invariant measure can be written as a linear combination of ergodic ones, we conclude that the $\xi_t$ are exactly the ergodic measures of $T$.

\subsection{Possible Generalizations}

\begin{remark}
By putting the combinatorics from \cite[section 5]{AK} on the rectangles $\left[ \frac{i}{l^3q}, \frac{i+1}{l^3q} \right) \times \left[ \frac{t}{r}, \frac{t+1}{r} \right)$ that are not contained in the minimality region, we can construct the diffeomorphism $T$ to be even weakly mixing with respect to each measure $\xi_t$. We can realize these combinatorics with the aid of Theorem \ref{permutation = block-slide}.
\end{remark}

\begin{remark}
With some additional technical and notational effort it is possible to generalize Theorem \ref{theorem prescribed no of measures} and the previous Remark to any torus $\T^d$, $d \geq 2$. Indeed, we construct the $r$ ergodic invariant measures on $N_t = \mathbb{S}^1 \times \left[ \frac{t}{r}, \frac{t+1}{r} \right) \times \T^{d-2}$ and consider partition elements \begin{equation}
\left[ \frac{i_1}{l^{d+1}q}, \frac{i_1+1}{l^{d+1}q} \right) \times \left[ \frac{i_2}{lr}, \frac{i_2+1}{lr} \right) \times \left[\frac{i_3}{l},\frac{i_3+1}{l} \right) \times \ldots \times \left[\frac{i_d}{l}, \frac{i_d+1}{l} \right)
\end{equation}
as building blocks in the description of the combinatorics. Then we use the combinatorics from the beginning of section \ref{constr nsr} to map sets of the form $\left[ \frac{i}{l^{d+1}q}, \frac{i+1}{l^{d+1}q} \right) \times \left[ \frac{j}{lr}, \frac{j+1}{lr} \right) \times [0,1)^{d-2}$ to sets of the form $\left[ \frac{i_1}{l^{3}q}, \frac{i_1+1}{l^{3}q} \right) \times \left[ \frac{j}{lr}, \frac{j+1}{lr} \right) \times \left[\frac{i_3}{l},\frac{i_3+1}{l} \right) \times \ldots \times \left[\frac{i_d}{l}, \frac{i_d+1}{l} \right)$.
\end{remark}

\begin{remark}
In this Remark we present modifications in order to prove the existence of a real-analytic diffeomorphism $T \in \text{Diff }^\omega_\rho(\T^2,\mu)$ which is minimal and has countable many ergodic invariant measures. By weak*-convergence there must be at least one singular ergodic measure. Indeed, we have precisely one singular measure and the other invariant measures are absolutely continuous with respect to Lebesgue measure.

This time we are going to construct the invariant measures on sets $N_t$, $t \in \Z$. For $t \in \N$ we define $N_t = \mathbb{S}^1 \times \left[ \frac{t}{t+1}, \frac{ t+1}{t+2} \right) \subset \T^2$ with $x_2$-length $\frac{1}{(t+2) \cdot (t+1)}$. For each $n \in \N$ we choose $t_n \in \N$ such that
\begin{equation} \label{eq t}
\frac{1}{(t_n + 1) \cdot t_n } < \frac{\delta_{n-1}}{2^n \cdot \|DH^{-1}_n \|_0 \cdot \max_{i=1, \ldots, n} \text{Lip}(\rho_i)}.
\end{equation}
Hereby, we define the further sets
\begin{align*}
    & N_0 = \mathbb{S}^1 \times \left[ \frac{1}{(t_1 + 1) \cdot t_1 }, \frac{1}{2} \right), \\
    & N_{-n} = \mathbb{S}^1 \times \left[ \frac{1}{\left( t_{n+1}+1 \right) \cdot t_{n+1}}, \frac{1}{\left(t_n +1 \right) \cdot t_n} \right) \text{ for every } n \in \N.
\end{align*}
Additionally, for every $n \in \N$ we will use sets
\begin{align*}
    & \bar{N}_{t_n} = \mathbb{S}^1 \times \left[ \frac{t_n}{t_n +1}, 1 \right) = \bigcup_{t \geq t_n} N_t \\
    & N^{(1)}_{-n} = \mathbb{S}^1 \times \left[ 0, \frac{1}{\left( t_n+1 \right) \cdot t_n } \right).
\end{align*}
This time we choose $l_n = \left(t_n + 1 \right)!$. Note that by equation \ref{eq t} the conditions \ref{ln criterion} and \ref{cond l birk} are satisfied. Moreover, this choice of $l_n$ allows us to consider building blocks $\left[ \frac{i}{l^3_n q_n}, \frac{i+1}{l^3_n q_n} \right) \times \left[ \frac{j}{l_n}, \frac{j+1}{l_n} \right)$ for the $\frac{1}{l_nq_n}$-equivariant combinatorics of $h^{-1}_{\mathfrak{2},n+1}$ to be contained in $\bar{N}_{t_n}$, $N^{(1)}_{-n}$ and $N_t$ for $-n< t < t_n$. As before, $h^{-1}_{\mathfrak{2}, n+1}$ is supposed to map long stripes $\left[ \frac{i}{l^3_n q_n}, \frac{i+1}{l^3_n q_n} \right) \times \left[0,1 \right)$ to $\left[0,\frac{1}{l^2_n q_n} \right) \times \left[ \frac{i}{l_n}, \frac{i+1}{l_n} \right)$ for $0 \leq i < l_n$. For $l_n \leq i < l^2_n$ $h^{-1}_{\mathfrak{2}, n+1}$ maps stripes of width $\frac{1}{l^3_nq_n}$ and full height in the particular set $N^{(1)}_{-n}$ or $N_t$ for $-n< t < t_n$ to sets with height $\frac{1}{l_n}$, while on $\bar{N}_{t_n}$ $h^{-1}_{\mathfrak{2}, n+1}$ acts approximately as the identity on the building blocks. For the trapping map $h^{(\mathfrak{1})}$ the step function is constructed with steps of seize $\frac{\delta}{l}$ in our modification.
\end{remark}

\section{Future Work}

Finally we note that Theorem \ref{permutation = block-slide} can be used to upgrade many constructions from the smooth category to the analytic category on the torus. We list some results here.

\subsection{Real-analytic diffeomorphisms with homogeneous spectrum and disjointness of convolutions}

The second author in \cite{Ku-Dc} was able to show that on any smooth compact connected manifold M of dimension $m \geq 2$ admitting a smooth non-trivial circle action, there exists a smooth diffeomorphism $f \in A_\a = \{h \circ \phi^\a \circ h^{-1} : h \in  \text{Diff}^\infty (M,\mu)\}$
for every Liouvillian number $\a$ which admits a good approximation of type $(h, h + 1)$, a maximal spectral type disjoint with its convolutions and a homogeneous spectrum of multiplicity two for the Cartesian square $f \times f$. Its is possible to generalize this result to the analytic category for some Liouvillian numbers.

\begin{theorem}
For any $\rho>0$, there exist real-analytic diffeomorphisms $T\in \text{Diff }_\rho^\omega (\T^2, \mu)$ that have a maximal spectral type disjoint
with its convolutions, a homogeneous spectrum of multiplicity 2 for $T \times T$ and admit a good approximation of type $(h, h + 1)$.
\end{theorem}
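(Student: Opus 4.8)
The plan is to run the smooth construction of \cite{Ku-Dc} through the real-analytic AbC machinery of Section~\ref{subsection abc method}, using Theorem~\ref{permutation = block-slide} as the bridge. Recall that in \cite{Ku-Dc} the diffeomorphism $f \in A_\alpha = \{h \circ \phi^\alpha \circ h^{-1} : h \in \text{Diff}^\infty(M,\mu)\}$ is obtained as a limit $T_n = H_n^{-1} \circ \phi^{\alpha_n} \circ H_n$, $H_n = h_n \circ H_{n-1}$, in which each conjugation map $h_n$ is a \emph{permutation} of a partition of the square into congruent rectangles — a partition of the type $\mathcal{S}_{k_nq_n,l_n}$ — and commutes with $\phi^{1/q_n}$. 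The combinatorics of this permutation is chosen so that $T_n$ realizes, with precision $\varepsilon_n$, a periodic approximation of type $(h,h+1)$ together with the patterns that force a maximal spectral type disjoint from its convolutions and a homogeneous spectrum of multiplicity $2$ on the Cartesian square. The first step is to record, essentially verbatim from \cite{Ku-Dc}, this sequence of permutations $\Pi_n$ of $\mathcal{S}_{k_nq_n,l_n}$, together with the relation $\Pi_n \circ \phi^{1/q_n} = \phi^{1/q_n} \circ \Pi_n$; here, since the analytic approximation only works on tori, we restrict to $\T^2$.

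The second step replaces each $\Pi_n$ by a real-analytic diffeomorphism. By Theorem~\ref{permutation = block-slide} each $\Pi_n$ is a block-slide type of map, and hence by Proposition~\ref{proposition approximation} it admits, for every $\varepsilon,\delta>0$, a map $h_n \in \text{Diff }^\omega_\infty(\T^2,\mu)$ that is $(\varepsilon,\delta)$-close to $\Pi_n$ and commutes with $\phi^{1/q_n}$. One then runs the inductive scheme of Section~\ref{subsection abc method}: choose $l_n$ large enough for the generating property (condition \ref{ln criterion}), build $h_{n+1}$ as an $(\varepsilon_n/2^{l_nk_nq_n},\varepsilon_n/2^{l_nk_nq_n})$-approximation of $\Pi_n$, and then pick $\alpha_{n+1}$ (equivalently $s_n$) so close to $\alpha_n$ that $d_\rho(T_{n+1},T_n)<\varepsilon_{n+1}$ and $d_0(T^i_{n+1},T^i_n)<2^{-n}$ for $0\leq i<q_n$. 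Completeness of $\text{Diff }^\omega_\rho(\T^2,\mu)$ yields the limit $T$, exactly as in Lemma~\ref{lem:conv}, and since $\alpha\in A_\alpha$-type Liouvillean conditions are needed one imposes on $\alpha$ the same speed requirement as in \cite{Ku-Dc}. As in Proposition~\ref{proposition monotonic generating cyclic partition} one extracts from the partitions $\mathcal{F}_{q_n}$ a monotonic generating sequence $\mathcal{M}_n$ cyclically permuted by $T_n$; this is the substitute for the exact Rokhlin-tower structure available in the smooth category.

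The third step transfers the spectral conclusions from the $T_n$ to $T$. One re-runs the estimates of \cite{Ku-Dc} checking that the Katok--Stepin speed of approximation of type $(h,h+1)$ survives the analytic limit: the error sets $E_n$ of Proposition~\ref{proposition approximation} satisfy $\mu(E_n)<\varepsilon_n/2^{l_nk_nq_n}$, so $\sum_n \mu(E_n)<\infty$ and $\mu$-a.e.\ point lies in only finitely many $E_n$; on the complement the partitions $\mathcal{M}_n$ behave monotonically and $T$ agrees with $T_n$ along the first $q_n$ iterates up to error $2^{-n}$ (Remark~\ref{close iterates}). Hence the approximation estimates established for $T_n$ pass to $T$ with the same speed, which by the spectral criteria used in \cite{Ku-Dc} gives a maximal spectral type disjoint from its convolutions, a homogeneous spectrum of multiplicity $2$ for $T\times T$, and the good approximation of type $(h,h+1)$.

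I expect the main obstacle to be precisely this last transfer: in the smooth AbC the conjugating maps permute the combinatorial partition \emph{exactly}, whereas here they do so only outside a set of small measure, so the multiplicity and disjointness-of-convolutions computations of \cite{Ku-Dc} must be redone with the error sets $E_n$ carried along — the same phenomenon that forced the modified measure-theoretic Lemma~\ref{lemma mtl} in the circle-rotation case. Keeping $\mu(E_n)$ summable and exploiting that the sets $\bigcup_{m\geq n}E_m$ can be arranged $T_n$-invariant (as in condition \ref{mtl 6}) should make the argument go through with only notational changes, so that no genuinely new spectral input beyond \cite{Ku-Dc} is required.
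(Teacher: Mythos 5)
Your plan coincides with the route the paper itself takes: the theorem appears in the Future Work section with no written proof, being asserted as an application of Theorem \ref{permutation = block-slide} together with Proposition \ref{proposition approximation} and the analytic AbC scheme of Section \ref{subsection abc method} applied to the permutation combinatorics of \cite{Ku-Dc}, with the detailed spectral transfer deferred to a forthcoming paper. Your outline — including the caveat that the error sets $E_n$ must be carried through the approximation-speed and multiplicity estimates, and that only sufficiently Liouvillean rotation numbers can be realized analytically — is exactly the intended argument, so it matches the paper's approach.
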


\subsection{Coding untwisted AbC diffeomorphisms and the anti-classification problem}

This work is motivated from a series of pioneering work done by Belezney, Foreman, Hjorth, Rudolph and Weiss on the interface of ergodic theory and foundations of mathematics. They were able to show that the conjugacy problem in abstract ergodic theory is non Borel. Later Foreman and Weiss found a method to code a `large' class of smooth diffeomorphisms constructed on $\T^2$ or the annulus or the disk by an untwisted version of the AbC method into some symbolic systems known as \emph{uniform circular systems}. This in particular shows that  the measure isomorphism relation among pairs $(S,T)$ of measure preserving diffeomorphisms of $M$ is not a Borel set with respect to the $C^\infty$ topology. 

The first author was able to show that the constructions we do in the real-analytic category on $\T^2$ are robust enough to construct a large family of untwisted AbC diffeomorphisms measure theoretically isomorphic to  uniform circular systems. Loosely this can be summarized into the following theorem:

\begin{theorem}[\cite{Ba-Sr}]
Let $T$ be an ergodic transformation on a standard measure space. Then the following are equivalent:
\begin{enumerate}
\item $T$ is measure theoretically isomorphic to a real-analytic (untwisted) AbC diffeomorphism (satisfying some requirements).
\item $T$ is isomorphic to a uniform circular system (with `fast' growing parameters).
\end{enumerate}
\end{theorem}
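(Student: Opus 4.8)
The plan is to prove the two implications separately, following the blueprint of Foreman and Weiss in the $C^{\infty}$ category but systematically replacing every use of an arbitrary measure-preserving conjugation by a real-analytic approximation of a block-slide type map, so that all constructions stay inside $\text{Diff}^{\omega}_{\rho}(\T^2,\mu)$. The two ingredients that make this possible are Theorem \ref{permutation = block-slide} (every permutation of a partition $\mathcal{S}_{kq,l}$ commuting with $\phi^{1/q}$ is a block-slide type map) and Proposition \ref{proposition approximation} (block-slide type maps commuting with $\phi^{1/q}$ admit $(\varepsilon,\delta)$-close analytic approximations commuting with $\phi^{1/q}$).

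\textbf{Realization, $(2)\Rightarrow(1)$.} Given a uniform circular system $\mathbb{K}$ with a fast growing circular coefficient sequence $(k_n,l_n)_{n\in\N}$, I would run the analytic AbC scheme of Subsection \ref{subsection abc method} using exactly these parameters $k_n,l_n$ and the prescribed increments $\alpha_{n+1}=\alpha_n+\frac{1}{s_nk_nl_nq_n^2}$. At stage $n+1$ the combinatorial move of the circular construction — the cyclic reshuffling of $\mathcal{W}_n$-words into $\mathcal{W}_{n+1}$-words together with the placement of the spacer symbols — is a permutation of a refining partition $\mathcal{S}_{k_nq_n,l_n}$ commuting with $\phi^{1/q_n}$, hence a block-slide type map $\mathfrak{h}_{n+1}$ by Theorem \ref{permutation = block-slide}; Proposition \ref{proposition approximation} then gives $h_{n+1}\in\text{Diff}^{\omega}_{\infty}(\T^2,\mu)$ that is $(\varepsilon_n,\delta_n)$-close to $\mathfrak{h}_{n+1}$ and commutes with $\phi^{\alpha_n}$. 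Convergence of $T_n=H_n^{-1}\circ\phi^{\alpha_n}\circ H_n$ in $\text{Diff}^{\omega}_{\rho}(\T^2,\mu)$ is then obtained exactly as in Lemma \ref{lem:conv} by taking $q_n$ (equivalently $s_n$) large, which is precisely where the \emph{fast growth} hypothesis is consumed. Finally, the generating, $T_n$-cyclically-permuted partitions $\mathcal{F}_{q_n}$ (or their monotone refinements $\mathcal{M}_n$ from Proposition \ref{proposition monotonic generating cyclic partition}) together with the canonical coding of atoms by $\mathcal{W}_n$-words produce a measure-theoretic isomorphism of $T$ with $\mathbb{K}$; that the coding is defined almost everywhere follows from $\mu(E_n)<\delta_n$ and the Borel–Cantelli argument already employed in Lemma \ref{lemma mtl}.

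\textbf{Coding, $(1)\Rightarrow(2)$.} Conversely, let $T=\lim_n T_n$ be a real-analytic untwisted AbC diffeomorphism built by the scheme above under the stipulated requirements. The sets $H_n^{-1}(\Delta_{i,q_n})$ form a Rokhlin tower of height $q_n$, and the way the atoms of $\mathcal{F}_{q_n}$ decompose into atoms of $\mathcal{F}_{q_{n+1}}$ is governed by the permutation $h_{n+1}$ and by the increment $\alpha_{n+1}-\alpha_n=\frac{1}{s_nk_nl_nq_n^2}$ — which is exactly the recipe of a circular construction sequence with coefficients $(k_n,l_n)$. One reads off a symbolic construction sequence $(\mathcal{W}_n)_{n\in\N}$, verifies the uniform circular axioms, and appeals to the coding machinery of Foreman and Weiss to conclude that $T$ is measure-isomorphic to the uniform circular system determined by $(\mathcal{W}_n)$; the fast growth of the symbolic parameters matches the (doubly exponential) lower bounds on $q_n$ forced by the analytic convergence estimates of Lemma \ref{lem:conv}.

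\textbf{Main obstacle.} The heart of the argument is the realization direction, and in particular the check that \emph{every} combinatorial operation demanded by the circular-systems formalism — not only the cyclic permutations but also the spacer insertions and the untwisting bookkeeping — can be implemented on $\T^2$ by a block-slide type map commuting with the relevant periodic translation, so that Theorem \ref{permutation = block-slide} and Proposition \ref{proposition approximation} are applicable. One must at the same time keep $\sum_n\mu(E_n)<\infty$ so that the coding survives on a full-measure set (as in Lemma \ref{lemma mtl}) and keep the parameters $q_n$ growing fast enough to close the $\text{Diff}^{\omega}_{\rho}$-Cauchy estimates; reconciling these competing demands with the rigidity of block-slide maps is the technical core of the proof, and is exactly why the equivalence is asserted only for circular systems with fast growing parameters.
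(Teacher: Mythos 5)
Your outline cannot be checked against anything in this paper: the theorem is quoted from the first author's work in preparation \cite{Ba-Sr}, and no proof (nor even a precise statement of the ``some requirements'' and ``fast growing parameters'') appears here. Judged on its own, your proposal correctly identifies the expected Foreman--Weiss route and the two tools that make an analytic implementation conceivable, namely Theorem \ref{permutation = block-slide} and Proposition \ref{proposition approximation}; but as written it is a roadmap rather than a proof, and the points it defers are exactly the ones the theorem is about.

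Concretely: (a) In the realization direction you assert that the circular-construction combinatorics --- including the spacer insertions and the untwisted bookkeeping --- is a permutation of $\mathcal{S}_{k_nq_n,l_n}$ commuting with $\phi^{1/q_n}$; this is the step that must actually be exhibited before Theorem \ref{permutation = block-slide} applies, and nothing in your sketch does so. (b) You say the ``fast growth'' hypothesis is consumed ``by taking $q_n$ large as in Lemma \ref{lem:conv}'', but in that lemma convergence is bought precisely because $q_n$ may be chosen freely after $l_n$ (and $\alpha\in\mathcal{L}_{\ast}$); when the parameters $(k_n,l_n,q_n)$ are dictated by a prescribed circular system this freedom is gone, so one must formulate and use a quantitative growth condition on the symbolic side that dominates the doubly exponential bounds of Lemma \ref{lem:conv} --- this is the crux, and your proposal names the tension without resolving it. (c) The measure isomorphism between the limit diffeomorphism and the circular system is not established: since the analytic approximation destroys monotonicity of the partitions $\mathcal{F}_{q_n}$, one needs an argument in the spirit of Lemma \ref{lemma mtl}, i.e.\ explicit partition isomorphisms $K_n$ together with control of the error sets satisfying conditions of the type \ref{mtl 1}--\ref{mtl 8}; invoking Borel--Cantelli alone does not produce the conjugacy, and the converse direction likewise defers entirely to ``the coding machinery of Foreman and Weiss'' without verifying its hypotheses (untwistedness, uniformity, isomorphism rather than mere factor) for the analytic construction. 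Until these three items are carried out, the equivalence remains unproved.
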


This along with some additional works of Foreman and Weiss would imply an anti-classification result for measure preserving real-analytic diffeomorphisms. More precisely,
\begin{theorem}
The measure-isomorphism relation among pairs $(S,T)\in\text{Diff }^\omega_\rho(\T^2,\mu)\times \text{Diff }^\omega_\rho(\T^2,\mu)$  is not a Borel set with respect to the $\text{Diff }^\omega_\rho(\T^2,\mu)$ topology.
\end{theorem}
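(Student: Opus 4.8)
This theorem is a consequence of the equivalence theorem of \cite{Ba-Sr} stated above together with the anti-classification machinery developed by Foreman and Weiss in the smooth setting. The strategy is the standard one of descriptive set theory: we exhibit a \emph{Borel reduction} of a known non-Borel equivalence relation into the measure-isomorphism relation on $\text{Diff }^\omega_\rho(\T^2,\mu)$. Foreman and Weiss show that the isomorphism relation on the class of uniform circular systems with fast growing coefficient parameters is not Borel --- indeed it Borel-reduces the isomorphism relation of odometer-based systems, which in turn Borel-reduces graph isomorphism, an analytic non-Borel equivalence relation. It therefore suffices to produce a Borel map
\[
\Phi : \mathcal{K} \longrightarrow \text{Diff }^\omega_\rho(\T^2,\mu), \qquad \mathbb{K} \longmapsto T_{\mathbb{K}},
\]
from the standard Borel space $\mathcal{K}$ of uniform circular systems with fast growing parameters (satisfying the requirements of the preceding theorem) such that $\mathbb{K}_1$ and $\mathbb{K}_2$ are measure-theoretically isomorphic if and only if $T_{\mathbb{K}_1}$ and $T_{\mathbb{K}_2}$ are measure-theoretically isomorphic.

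\textbf{Construction of $\Phi$.} Given $\mathbb{K}\in\mathcal{K}$, the proof of the direction $(2)\Rightarrow(1)$ of the equivalence theorem of \cite{Ba-Sr} produces an explicit untwisted AbC construction realizing $\mathbb{K}$: the coefficient sequences of $\mathbb{K}$ determine, stage by stage, the parameters $q_n, k_n, l_n, s_n$ and the functions $a_n$ of the analytic AbC scheme of Subsection \ref{subsection abc method}, hence the conjugation maps $h_n$, built from block-slide type maps and their real-analytic approximations supplied by Proposition \ref{proposition approximation}, and finally the limit $T_{\mathbb{K}}=\lim_n H_n^{-1}\circ\phi^{\alpha_n}\circ H_n\in\text{Diff }^\omega_\rho(\T^2,\mu)$. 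Because every stage of this scheme involves only finitely many choices, each determined by finitely many coordinates of $\mathbb{K}$, and because the convergence estimates (as in Lemma \ref{lem:conv}) are uniform over $\mathcal{K}$, the assignment $\mathbb{K}\mapsto T_{\mathbb{K}}$ is continuous --- in particular Borel --- from $\mathcal{K}$ to $\text{Diff }^\omega_\rho(\T^2,\mu)$.

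\textbf{$\Phi$ is a reduction.} By construction $T_{\mathbb{K}}$ is measure-theoretically isomorphic to $\mathbb{K}$. Hence if $\mathbb{K}_1\cong\mathbb{K}_2$ then $T_{\mathbb{K}_1}\cong\mathbb{K}_1\cong\mathbb{K}_2\cong T_{\mathbb{K}_2}$; conversely, if $T_{\mathbb{K}_1}\cong T_{\mathbb{K}_2}$ as measure-preserving transformations, then $\mathbb{K}_1\cong T_{\mathbb{K}_1}\cong T_{\mathbb{K}_2}\cong\mathbb{K}_2$. Thus $\Phi$ reduces the isomorphism relation on $\mathcal{K}$ to the measure-isomorphism relation $E$ on $\text{Diff }^\omega_\rho(\T^2,\mu)$. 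If $E$ were Borel, then its preimage under the Borel map $(\mathbb{K}_1,\mathbb{K}_2)\mapsto(T_{\mathbb{K}_1},T_{\mathbb{K}_2})$, which equals the isomorphism relation on $\mathcal{K}$, would be Borel --- contradicting the non-Borelness of isomorphism of uniform circular systems. Therefore $E$ is not Borel.

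\textbf{Main obstacle.} The analytic content --- that block-slide maps are approximable by measure-preserving real-analytic diffeomorphisms and that the resulting AbC limits lie in $\text{Diff }^\omega_\rho$ --- is already in place (Proposition \ref{proposition approximation} and the convergence arguments of the earlier sections). The real work, and the step one must be most careful about, is twofold: first, verifying that the parametrized construction can genuinely be carried out by a Borel (preferably continuous) rule, i.e. that the choices of $l_n$, $q_n$, $s_n$, $a_n$ and of the approximation accuracies in Proposition \ref{proposition approximation}, as well as the entire functions approximating the relevant step functions, may be specified uniformly in $\mathbb{K}$ and that $T_{\mathbb{K}}$ depends continuously on $\mathbb{K}$ in the $\text{Diff }^\omega_\rho$-metric; and second, matching the domain $\mathcal{K}$ of the reduction with the class of circular systems on which Foreman and Weiss establish non-Borelness, so that the fast-growth hypotheses on the circular-system parameters and the structural requirements in the theorem of \cite{Ba-Sr} are simultaneously satisfiable on all of $\mathcal{K}$. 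Both points are essentially bookkeeping and parallel exactly what is done in the $C^\infty$ case; the new ingredient that makes the argument go through in the real-analytic category is precisely the real-analytic approximation of block-slide type maps.
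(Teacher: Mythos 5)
Your proposal follows exactly the route the paper intends: the paper states this theorem without a detailed proof, presenting it as a consequence of the symbolic-representation theorem of \cite{Ba-Sr} combined with the Foreman--Weiss anti-classification machinery, i.e. precisely the Borel-reduction argument you outline. Your write-up simply makes explicit the reduction and the bookkeeping the paper defers to \cite{Ba-Sr} and Foreman--Weiss, so it is the same approach, correctly sketched.
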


\vspace{2cm}

\noindent\emph{Acknowledgement:} The authors would like to thank Anatole Katok for numerous discussions and constant encouragements. Additionally, the second author would like to thank the Center for Dynamics and Geometry at Penn State for hospitality and financial support at a visit in November 2016 when large parts of this paper were completed. He also acknowledges financial support by the ``Forschungsfonds'' at the Department of Mathematics, University of Hamburg. 

\newpage

\end{document}